\theoremstyle{plain}
\newtheorem{cor}{Corollary}[section]
\newtheorem{lem}{Lemma}[section]
\newtheorem{thm}{Theorem}[section]            
\newtheorem{prop}{Proposition}[section]
\theoremstyle{definition}
\newtheorem{exa}{Example}[section]
\newcommand{\beqn}[1][*]{\begin{equation#1}}
\newcommand{\eeqn}[1][*]{\end{equation#1}}
\newcommand{\btab}[1]{\renewcommand{\arraystretch}{1.2}\begin{center}\begin{tabular}{#1}}
\newcommand{\etab}{\end{tabular}\end{center}\renewcommand{\arraystretch}{1.0}}
\newcommand{\bite}{\begin{itemize}}
\newcommand{\eite}{\end{itemize}}
\newcommand{\benu}{\begin{enumerate}}
\newcommand{\eenu}{\end{enumerate}}
\newcommand{\bcen}{\begin{center}}
\newcommand{\ecen}{\end{center}}
\newcommand{\dsty}{\displaystyle}
\newcommand{\mca}[1]{\mathcal{#1}}
\newcommand{\mrm}[1]{\mathrm{#1}}
\newcommand{\mfr}[1]{\mathfrak{#1}}
\def\side#1{\ifvmode\leavevmode\fi\vadjust{\vbox to0pt{\vss
\hbox to 0pt{\hskip\hsize\hskip1em                     
\vbox{\hsize2cm\small\raggedright\pretolerance10000       
\noindent\textcolor{red}{#1}\hfill}\hss}\vbox to8pt{\vfil}\vss}}}
\newcommand{\x}{\ensuremath{\times}}
\newcommand{\op}{\ensuremath{\oplus}}
\newcommand{\ox}{\ensuremath{\otimes}}
\newcommand{\lan}{\ensuremath{\left\langle}}
\newcommand{\ran}{\ensuremath{\right\rangle}}
\newcommand{\hook}{\ensuremath{\lrcorner\,}}
\newcommand{\ra}{\ensuremath{\rightarrow}}
\newcommand{\setsep}{\ensuremath{\,\big|\,}}
\newcommand{\ex}{\ensuremath{\exists\,}}
\newcommand{\all}{\ensuremath{\forall\,}}
\newcommand{\sdots}{\ensuremath{{\scriptstyle\ldots}}}
\newcommand{\R}{\ensuremath{\mathbb{R}}}
\newcommand{\Lie}{\ensuremath{\mathcal{L}}}
\newcommand{\W}{\ensuremath{\mathcal{W}}}
\newcommand{\T}{\ensuremath{\mathcal{T}}}
\newcommand{\A}{\ensuremath{\mathcal{A}}}
\newcommand{\G}{\ensuremath{\mathrm{G}}}
\newcommand{\vphi}{\ensuremath{\varphi}}    
\newcommand{\vrho}{\ensuremath{\varrho}}
\newcommand{\Ker}{\ensuremath{\mathrm{Ker}}}
\newcommand{\diag}{\ensuremath{\mathrm{diag}}}
\newcommand{\pr}{\ensuremath{\mathrm{pr}}}
\newcommand{\Ric}{\ensuremath{\mathrm{Ric}}}
\newcommand{\Scal}{\ensuremath{\mathrm{Scal}}}
\newcommand{\hol}{\ensuremath{\mathfrak{hol}}}
\newcommand{\Hol}{\ensuremath{\mathrm{Hol}}}
\newcommand{\GL}{\ensuremath{\mathrm{GL}}}
\newcommand{\SL}{\ensuremath{\mathrm{SL}}}
\newcommand{\PSL}{\ensuremath{\mathrm{PSL}}}
\newcommand{\su}{\ensuremath{\mathfrak{su}}}
\newcommand{\SU}{\ensuremath{\mathrm{SU}}}
\newcommand{\PSU}{\ensuremath{\mathrm{PSU}}}
\newcommand{\psu}{\ensuremath{\mathfrak{psu}}}
\newcommand{\Orth}{\ensuremath{\mathrm{O}}}
\newcommand{\so}{\ensuremath{\mathfrak{so}}}
\newcommand{\SO}{\ensuremath{\mathrm{SO}}}
\newcommand{\spin}{\ensuremath{\mathfrak{spin}}}
\newcommand{\Spin}{\ensuremath{\mathrm{Spin}}}
\newcommand{\g}{\ensuremath{\mathfrak{g}}}
\newcommand{\h}{\ensuremath{\mathfrak{h}}}
\newcommand{\m}{\ensuremath{\mathfrak{m}}}
\newcommand{\zero}{\ensuremath{\mathfrak{0}}}
\newcommand{\tor}{\ensuremath{\mathfrak{t}}}
\begin{document}
\thispagestyle{empty}
\date{\today}
\title{Riemannian manifolds with structure group PSU(3)}
\author{Christof Puhle}
\address{
Institut f\"ur Mathematik \newline\indent
Humboldt-Universit\"at zu Berlin\newline\indent
Unter den Linden 6\newline\indent
10099 Berlin, Germany}
\email{\noindent puhle@math.hu-berlin.de}
\urladdr{www.math.hu-berlin.de/~puhle}
\subjclass[2000]{Primary 53C25; Secondary 81T30}
\keywords{$\PSU(3)$-structures, connections with torsion, holonomy}
\begin{abstract}
We study $8$-dimensional Riemannian manifolds that admit a $\PSU(3)$-structure. We classify these structures by their intrinsic torsion and characterize the corresponding classes via differential equations. Moreover, we consider a connection defined by a $3$- and a $4$-form that preserves the underlying structure. Finally, we discuss the geometry of these manifolds relatively to the holonomy algebra of this connection.
\end{abstract}
\maketitle
\setcounter{tocdepth}{1}
\bcen
\begin{minipage}{0.7\linewidth}
    \begin{small}
      \tableofcontents
    \end{small}
\end{minipage}
\ecen
\pagestyle{headings}
%
%
%
%
\section{Introduction}\noindent
In the first half of the last century several mathematicians investigated the action of the general linear group on the space of alternating trilinear forms (see \cites{Rei07,Sch31,Gur35}). Their results can be summerized as follows (cf. \cite{Hit01}): If a $3$-form $\rho\in\Lambda^3{\R^n}^{\ast}$ lies in an open orbit under the natural action of $\GL\left(n\right)$, then $n=6,7,8$ and the stabilizer of $\rho$ in $\GL\left(n\right)$ is a real form of one of the complex groups
\beqn
\SL(3)\x\SL(3),\quad \G_2, \quad \PSL(3)
\eeqn
respectively. Therefore, if an oriented Riemannian manifold $\left(M^n,g\right)$ admits a global $3$-form $\rho$ that lies in an open orbit, its dimension is $n=6,7,8$ and there exists a $\G$-structure on $\left(M^n,g\right)$ with $\G$ one of the compact groups
\beqn
\SU(3),\quad \G_2, \quad \PSU(3)
\eeqn
respectively. In contrast to the first two cases, the latter is, apart from the study in \cites{Hit01,Wit05,Wit08,Nur08}, largely unexplored so far.

One particular $3$-form lying in an open $\GL\left(8\right)$-orbit is familiar to most theoretical physicists. Starting with Gell-Mann's Eightfold Way \cite{GN64} -- a first classification scheme for elementary particles which led to the quark model -- an important algebraic role in particle physics is played by $f\in\Lambda^3{\R^8}^{\ast}$ whose non-zero components are
\beqn
f_{123}=1,\quad f_{147}=-f_{156}=f_{246}=f_{257}=f_{345}=-f_{367}=\frac{1}{2},\quad f_{458}=f_{678}=\frac{\sqrt{3}}{2}.
\eeqn
Up to a factor, these components are the structure constants,
\beqn
\left[\lambda_i,\lambda_j\right]=2\,\mrm{i}\sum_kf_{ijk}\lambda_k,
\eeqn
of $\SU(3)$ with respect to the Gell-Mann matrices $\lambda_i$, a generalization of the Pauli matrices which is used to represent the $8$ types of gluons that mediate the so-called strong force in quantum chromodynamics. From the above point of view, $f$ lies in an open orbit under the action of $\GL(8)$ and its stabilizer is $\PSU(3)$ (see \cite{Wit05}).

Firstly, we study general aspects of $\PSU(3)$-structures $\left(M^8,g,\rho\right)$. To begin with, we follow the method of \cite{Fri03} and classify these structures with respect to the algebraic type of the corresponding intrinsic torsion tensor $\Gamma$. There are six irreducible $\PSU(3)$-modules $\W_1,\ldots,\W_6$ in the decomposition of the space of possible intrinsic torsion tensors (see theorem \ref{thm:1}),
\beqn
\Gamma\in\W_1\op\ldots\op\W_6.
\eeqn
Thus, there exist $64$ classes in this scheme. Up to \cite{Nur08}, the only $\PSU(3)$-structures studied were those with $\Gamma\in\W_6$ . Indeed, in our characterization of the $64$ classes by differential equations in terms of $\rho$ (see theorem \ref{thm:3}) we conclude that the case $\Gamma\in\W_6$ is equivalent to
\beqn
d\rho=0,\quad \delta\rho=0,
\eeqn
the general assumption in the considerations of \cites{Hit01,Wit05,Wit08}. The so-called restricted nearly integrable $\SU(3)$ structures of \cite{Nur08} correspond to the case $\Gamma\in\W_1\op\W_2\op\W_3$. Via $\PSU(3)$-invariant isomorphisms, we identify the projection of $\Gamma$ onto $\W_1\op\ldots\op\W_5$ with a pair $\left(T^c,F^c\right)$ of differential forms on $\left(M^8,g,\rho\right)$, a $3$-form $T^c$ and a $4$-form $F^c$. Restricting the considerations to $\PSU(3)$-structures whose intrinsic torsion is of type $\W_1\op\ldots\op\W_5$, there exists a uniquely determined metric connection $\nabla^c$ that preserves the underlying structure (cf.\ theorem \ref{thm:2}),
\beqn
g\left(\nabla^c_XY,Z\right)=g\left(\nabla^{g}_XY,Z\right)+\frac{1}{2}\, T^c\left(X,Y,Z\right)+\left(\left(X\hook\rho\right)\hook F^c\right)\left(Y,Z\right).
\eeqn
The torsion tensor $\T^c$ of this so-called characteristic connection realizes different algebraic types (see corollary \ref{cor:2}). For example, $\T^c$ is totally skew-symmetric if and only if $F^c=0$, that is if $\Gamma\in\W_1\op\W_2\op\W_3$, $\T^c$ is cyclic if and only if $T^c=0$, or equivalently if $\Gamma\in\W_4\op\W_5$. The $\nabla^c$-parallelism of $\T^c$ is equivalent (see proposition \ref{prop:1}) to
\beqn
\nabla^cT^c=0,\quad\nabla^cF^c=0.
\eeqn
Moreover, we compute necessary conditions for $T^c$, $F^c$ and $\mca{R}^c$, the curvature operator of $\nabla^c$, in the case of $\nabla^c\T^c=0$ (see corollaries \ref{cor:3} and \ref{cor:1}). 

Secondly, our aim is the construction and classification of $\PSU(3)$-manifolds with $\nabla^c$-parallel $\T^c\neq0$. These are Ambrose-Singer manifolds, i.e.\
\beqn
\nabla^c\T^c=0,\quad\nabla^c\mca{R}^c=0,
\eeqn
and the holonomy algebra $\hol\left(\nabla^c\right)$ of the characteristic connection is one of six subalgebrae of $\psu(3)$ (see lemma \ref{lem:8} and propositions \ref{prop:6} and \ref{prop:4}). We then assume
\beqn
\dim\left(\hol\left(\nabla^c\right)\right)>1.
\eeqn
For each of the remaining four holonomy algebrae we describe the admissible tensors $\T^c$, $\mca{R}^c$ and discuss the respective geometry. Summerizing these considerations, the main classification result is that one of the following holds:
\begin{itemize}
\item[i)] $\left(M^8,g,\rho\right)$ is of type $\W_1\op\W_2\op\W_3$ (see theorem \ref{thm:5}) and admits a $\Spin(7)$-structure preserved by $\nabla^c$ (see theorem \ref{thm:4}). Moreover, if $\left(M^8,g,\rho\right)$ is regular, it is a principal $\mrm{S}^1$-bundle and a Riemannian submersion over a cocalibrated $\G_2$-manifold of certain type (cf.\ theorems \ref{thm:6}, \ref{thm:7}, \ref{thm:8} and \ref{thm:11}).
\item[ii)] $\left(M^8,g,\rho\right)$ is either of type $\W_3$ and $\Scal^g>0$ or of type $\W_5$ and $\Scal^g<0$ (see theorem \ref{thm:9}). In each of these two cases, $M^8$ is locally isomorphic to a unique homogeneous space with isotropy group $\SO(3)$ (cf.\ theorem \ref{thm:10}).
\end{itemize}
Conversely, we can reconstruct the $\PSU(3)$-structure in many of these situations (see theorems \ref{thm:4}, \ref{thm:6} and \ref{thm:10}, together with example \ref{exa:1}).

The paper is structured as follows: In \autoref{sec:2} we study the algebra related to the action of the group $\PSU(3)$. We then classify $\PSU(3)$-structures $\left(M^8,g,\rho\right)$ with respect to their intrinsic torsion in \autoref{sec:3}. Section \ref{sec:4} is devoted to the characterization of these classes by differential equations in terms of $\rho$. We develop the notion of characteristic connection and study its torsion and curvature in \autoref{sec:5}. In the last section we discuss the geometry of $M^8$ relatively to the holonomy algebra of this connection.
%
%
%
%
\section{The group \texorpdfstring{$\PSU\left(3\right)$}{PSU(3)}}\label{sec:2}\noindent
We first introduce some notation. $\R^8$ denotes the $8$-dimensional Euclidian space. We fix an orientation in $\R^8$ and use its scalar product to identify $\R^8$ with its dual space ${\R^8}^{\ast}$. Let $\left(e_1,\ldots,e_8\right)$ denote an oriented orthonormal basis and $\Lambda^k$ the space of $k$-forms of $\R^8$. The family of operators
\beqn
\sigma_j:\Lambda^k\x\Lambda^l\ra\Lambda^{k+l-2j},
\eeqn
\begin{align*}
\sigma_{j}\left(\alpha,\beta\right)&:=\sum_{i_1<\ldots<i_j}\left(e_{i_1}\hook\ldots\hook e_{i_j}\hook\alpha\right)\wedge\left(e_{i_1}\hook\ldots\hook e_{i_j}\hook\beta\right), &
\sigma_0\left(\alpha,\beta\right)&:=\alpha\wedge\beta
\end{align*}
allows us to define an inner product and a norm on $\Lambda^k$ as
\begin{align*}
\lan\alpha,\beta\ran&:=\sigma_k\left(\alpha,\beta\right), &
\|\alpha\|&:=\sqrt{\sigma_k\left(\alpha,\alpha\right)}.
\end{align*}
The special orthogonal group $\SO(8)$ acts on $\Lambda^k$ via the adjoint representation $\vrho$. The differential
\beqn
\vrho_* : \mfr{so}(8) \rightarrow \mfr{so}\left(\Lambda^k\right)
\eeqn
of this faithful representation can be expressed as
\beqn
\vrho_\ast\left(\omega\right)\left(\alpha\right) = \sigma_1\left(\omega,\alpha\right)
\eeqn
by identifying the Lie algebra $\so(8)$ with the space of $2$-forms $\Lambda^2$.

The $8$-dimensional, compact, connected Lie group $\PSU(3)\subset\SO(8)$ can be described as the isotropy group of the $3$-form
\beqn\label{eqn:1}
\rho = e_{246}-e_{235}-e_{145}-e_{136}+\left(e_{12}+e_{34}-2\,e_{56}\right)\wedge e_7+\sqrt{3}\left(e_{12}-e_{34}\right)\wedge e_8. \tag{$\star$}
\eeqn
We use the notation $e_{i_1\ldots i_j}$ for the exterior product
\beqn
e_{i_1}\wedge\ldots\wedge e_{i_j}.
\eeqn

There are three $\PSU(3)$-equivariant operators defined on $\Lambda^k$, namely the Hodge operator $\ast:\Lambda^k\ra\Lambda^{8-k}$, $\sigma_+:\Lambda^k\ra\Lambda^{k+1}$ and $\sigma_-:\Lambda^k\ra\Lambda^{k-1}$, the latter two defined by
\begin{align*}
\sigma_+\left(\alpha\right)&:=\sigma_1\left(\rho,\alpha\right), & \sigma_-\left(\alpha\right)&:=\sigma_2\left(\rho,\alpha\right).
\end{align*}
Using these, we decompose $\Lambda^k$ into irreducible $\PSU(3)$-modules of real type. First we restate the results of \cite{Wit05} regarding $\Lambda^1$, $\Lambda^2$, $\Lambda^3$. The space $\Lambda^1$ is an irreducible $\PSU(3)$-module and
\beqn
\Lambda^2=\Lambda^2_8\op\Lambda^2_{20}
\eeqn
splits into the two irreducible $\PSU(3)$-modules
\begin{align*}
\Lambda^2_8 &:= \left\{\sigma_+\left(\xi\right)\setsep\xi\in\Lambda^1\right\}, &
\Lambda^2_{20} &:= \left\{\omega\in\Lambda^2\setsep\omega\wedge\ast\rho=0\right\}.
\end{align*}
The space of $3$-forms
\beqn
\Lambda^3=\Lambda^3_1\op\Lambda^3_8\op\Lambda^3_{20}\op\Lambda^3_{27}
\eeqn
decomposes into four irreducible $\PSU(3)$-modules:
\begin{align*}
\Lambda^3_1 &:= \left\{t\cdot\rho\setsep t\in\R\right\}, \\
\Lambda^3_8 &:= \left\{\ast\left(\omega\wedge\rho\right)\setsep\omega\in\Lambda^2_8\right\},\\
\Lambda^3_{20} &:= \left\{\sigma_+\left(\omega\right)\setsep\omega\in\Lambda^2_{20}\right\}, \\
\Lambda^3_{27} &:= \left\{T\in\Lambda^3\setsep T\wedge\rho=0\ \text{and}\ T\wedge\ast\rho=0\right\}.
\end{align*}
The subscript indicates the dimension of a module. We then define
\begin{align*}
\Lambda^4_{8} &:= \left\{\sigma_+\left(T\right)\setsep T\in\Lambda^3_8\right\}, &
\Lambda^4_{27} &:= \left\{\sigma_+\left(T\right)\setsep T\in\Lambda^3_{27}\right\}.
\end{align*}
\begin{lem}\label{lem:3}
The operators $\sigma_+$, $\sigma_-$ satisfy
\bite
\item[i)] $\dsty\sigma_-\circ\sigma_+\left(\xi\right)=6\,\xi\quad\all\xi\in\Lambda^1$,
\item[ii)] $\dsty\sigma_+\left(\omega\right)=0\quad\all\omega\in\Lambda^2_8$,
\item[iii)] $\dsty\sigma_-\circ\sigma_+\left(T\right)=6\,T\quad\all T\in\Lambda^3_8$,
\item[iv)] $\dsty\sigma_+\circ\sigma_-\left(T\right)=12\,T\quad\all T\in\Lambda^3_{20}$,
\item[v)] $\dsty\sigma_-\circ\sigma_+\left(T\right)=16\,T\quad\all T\in\Lambda^3_{27}$,
\item[vi)] $\dsty\sigma_-\left(F\right)=0\quad\all F\in\ast\Lambda^4_8\op\ast\Lambda^4_{27}$.
\eite
\end{lem}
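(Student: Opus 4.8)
The plan is to exploit $\PSU(3)$-equivariance together with Schur's lemma, so that every assertion reduces to the evaluation of a single explicit form. Since $\sigma_+$ and $\sigma_-$ are $\PSU(3)$-equivariant, so is every composition appearing in i)--vi), and each of the modules $\Lambda^1$, $\Lambda^2_8$, $\Lambda^3_1$, $\Lambda^3_8$, $\Lambda^3_{20}$, $\Lambda^3_{27}$ is irreducible of real type, so its equivariant endomorphism algebra $\End_{\PSU(3)}$ is $\R$. Within $\Lambda^3$ the four summands have pairwise distinct dimensions $1,8,20,27$, hence there is no nonzero equivariant map between two different summands; consequently the image of an equivariant map out of one summand is forced into the unique summand of $\Lambda^3$ isomorphic to it, or is zero. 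This is the structural skeleton on which all six computations hang.

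With this in place I would argue as follows. In i), $\sigma_-\circ\sigma_+$ is an equivariant endomorphism of the irreducible $\Lambda^1$, hence a real scalar $\lambda\cdot\Id$; likewise in iii), iv), v) the compositions return to $\Lambda^3_8$, $\Lambda^3_{20}$, $\Lambda^3_{27}$ respectively, by the dimension argument above, and act as scalars there. To pin down the scalar it then suffices to evaluate both sides on one nonzero representative. For ii) and vi) the same reasoning shows the relevant map is either zero or an isomorphism onto the unique isomorphic summand of $\Lambda^3$; since the kernel of an equivariant map out of an irreducible module is a submodule, exhibiting a single nonzero annihilated element forces the whole map to vanish.

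The remaining, and genuinely laborious, task is the explicit evaluation against \eqref{eqn:1}. For i) I would take $\xi=e_8$, compute $\sigma_+(e_8)=\sigma_1(\rho,e_8)$, then apply $\sigma_-=\sigma_2(\rho,\cdot)$ and read the coefficient $6$ off directly. For ii) I would use $\Lambda^2_8=\sigma_+(\Lambda^1)$ and check $\sigma_+(\sigma_+(e_8))=0$. For iii)--v) I would choose representatives adapted to each module: an element $\ast(\omega\wedge\rho)$ with $\omega=\sigma_+(e_8)\in\Lambda^2_8$ for $\Lambda^3_8$, an element $\sigma_+(\omega)$ with a simple $\omega\in\Lambda^2_{20}$ (a decomposable $2$-form annihilated by $\wedge\ast\rho$) for $\Lambda^3_{20}$, and for $\Lambda^3_{27}$ a decomposable $3$-form $T$ satisfying $T\wedge\rho=T\wedge\ast\rho=0$; in each case $\sigma_+$ raises and $\sigma_-$ lowers the degree, and the constants $6,12,16$ emerge as the proportionality factors. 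For vi) I would evaluate $\sigma_2(\rho,\cdot)$ on $\ast\sigma_+(T)$ for one $T\in\Lambda^3_8$ and one $T\in\Lambda^3_{27}$ and verify the vanishing.

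The hard part is purely computational: the form \eqref{eqn:1} has ten terms, and each application of $\sigma_1(\rho,\cdot)$ or $\sigma_2(\rho,\cdot)$ requires summing contractions over all basis indices, respectively over all pairs of indices, so the bookkeeping for the $20$- and $27$-dimensional modules is the bottleneck. A useful device to shorten this, and to cross-check the scalars, is the adjunction between wedging and contraction, which makes $\sigma_+$ and $\sigma_-$ adjoint up to a universal constant; the scalar in each of i), iii), iv), v) can then be recovered as a ratio of norms $\|\sigma_+\alpha\|^2/\|\alpha\|^2$, respectively $\|\sigma_-\alpha\|^2/\|\alpha\|^2$, for the chosen representative $\alpha$, which is a single number rather than a full form.
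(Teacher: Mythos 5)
Your structural skeleton is sound, and it is a genuinely different (and more economical) route than the paper's, which states the lemma with no proof at all, as a bare ``direct computation'': the $\PSU(3)$-equivariance of $\sigma_\pm$, Schur's lemma on the irreducible summands (the paper asserts they are of real type, so the equivariant endomorphism algebras are indeed $\R$), the dimension count excluding nonzero equivariant maps between non-isomorphic summands, and the kernel argument reducing ii) and vi) to exhibiting one annihilated element are all correct. Your cross-check device is also legitimate, and in a stronger form than you hedge: one verifies directly from the definitions that $\lan\sigma_1(\rho,\alpha),\beta\ran=\lan\alpha,\sigma_2(\rho,\beta)\ran$ holds exactly (constant $1$), so each scalar in i), iii), iv), v) is the norm ratio you describe, and in particular non-negative. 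The evaluations you would have to carry out do work where the representatives exist; e.g.\ $\sigma_+(e_8)=e_8\hook\rho=\omega_8$, $\sigma_-(\omega_8)=6\,e_8$, and $\sigma_+(\omega_8)=0$, confirming i) and ii).

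There is, however, one step that would fail as written: for v) you propose to evaluate on ``a decomposable $3$-form $T$ satisfying $T\wedge\rho=T\wedge\ast\rho=0$'', and no such form exists. For a decomposable $T=\vol_P$ attached to a $3$-plane $P$, the condition $T\wedge\rho=0$ says $\rho$ restricts to zero on $\Lambda^3(P^{\perp})$, while $T\wedge\ast\rho=0$ says $\lan T,\rho\ran=0$, i.e.\ $\rho$ restricts to zero on $\Lambda^3P$. Identify $(\R^8,\rho)$ with $\su(3)$ carrying its Cartan $3$-form $\lan\left[X,Y\right],Z\ran$. If $Q:=P^{\perp}$ is isotropic, then $\left[Q,Q\right]\subseteq P$, so for every $X\in Q$ the map $\ad_X\restr_Q$ takes values in the $3$-dimensional $P$ and thus has kernel of dimension at least $2$; choosing $X$ regular forces the entire Cartan subalgebra through $X$ to lie in $Q$, and a root-space computation then shows $Q$ is conjugate to $i\cdot\mrm{Sym}_0(3,\R)$, hence $P$ is conjugate to the $\so(3)$ of real skew-symmetric matrices, on which the Cartan form does not vanish. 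Concretely, $e_{145}\wedge\rho=0$ but $\lan e_{145},\rho\ran=-1$, and this is the general picture: every decomposable $3$-form killed by $\wedge\,\rho$ has a nonzero $\Lambda^3_1$-component. The repair is immediate, since your Schur argument never needs decomposability: any explicit nonzero element of $\Lambda^3_{27}$ serves, e.g.\ $e_{145}-e_{136}$ (every monomial of $\rho$ shares an index with $\{1,4,5\}$ and with $\{1,3,6\}$, so both wedge products with $\rho$ vanish, and the coefficients $-1,-1$ of $e_{145},e_{136}$ in $\rho$ give $\lan e_{145}-e_{136},\rho\ran=0$), or the form $\rho+16\,e_{145}\in\Lambda^3_{27}$ which the paper itself uses in its final section. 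The same substitute is needed in vi), where your representative of $\ast\Lambda^4_{27}=\ast\sigma_+\left(\Lambda^3_{27}\right)$ was to be built from the $\Lambda^3_{27}$-element of v).
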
\noindent
Therefore, $\sigma_+$ is a $\PSU(3)$-equivariant isomorphism between $\Lambda^3_i$ and $\Lambda^4_i$ for $i=8,27$.
\begin{prop}
The decomposition
\beqn
\Lambda^4=\Lambda^4_{8}\op\Lambda^4_{27}\op\ast\Lambda^4_{8}\op\ast\Lambda^4_{27}
\eeqn
splits the space of $4$-forms of $\R^8$ into irreducible $\PSU(3)$-modules.
\end{prop}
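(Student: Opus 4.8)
The plan is to verify three things: that each of the four summands is a $\PSU(3)$-submodule, that each is irreducible, and that the sum is direct and exhausts $\Lambda^4$. The first point is immediate, since $\sigma_+$, $\sigma_-$ and the Hodge operator $\ast$ are all $\PSU(3)$-equivariant, so the image of an irreducible module under any of them, as well as the kernel, is again a submodule.

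For irreducibility and the dimensions I would invoke lemma \ref{lem:3}. Parts (iii) and (v) give $\sigma_-\circ\sigma_+=6\,\Id$ on $\Lambda^3_8$ and $\sigma_-\circ\sigma_+=16\,\Id$ on $\Lambda^3_{27}$, so $\sigma_+$ restricts to an injective equivariant map on each of these spaces. Hence $\Lambda^4_8\cong\Lambda^3_8$ and $\Lambda^4_{27}\cong\Lambda^3_{27}$ as $\PSU(3)$-modules, and since the latter are irreducible of dimensions $8$ and $27$, so are the former. As $\ast$ is an equivariant isomorphism, $\ast\Lambda^4_8$ and $\ast\Lambda^4_{27}$ are likewise irreducible of dimensions $8$ and $27$. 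The dimensions then add up to $8+27+8+27=70=\dim\Lambda^4$, so it will suffice to show the sum is direct; spanning follows automatically.

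The heart of the argument is the directness, and here the main obstacle is that $\Lambda^4_8$ and $\ast\Lambda^4_8$ (resp.\ $\Lambda^4_{27}$ and $\ast\Lambda^4_{27}$) are isomorphic as abstract modules, so irreducibility alone cannot separate them. The tool that distinguishes them is $\sigma_-$: by lemma \ref{lem:3} (vi) it vanishes identically on $\ast\Lambda^4_8\op\ast\Lambda^4_{27}$, whereas on $\Lambda^4_8$ and $\Lambda^4_{27}$ it is injective with images in $\Lambda^3_8$ and $\Lambda^3_{27}$ (again by (iii), (v)). Concretely, given a relation $a+b+c+d=0$ with $a\in\Lambda^4_8$, $b\in\Lambda^4_{27}$, $c\in\ast\Lambda^4_8$, $d\in\ast\Lambda^4_{27}$, I would apply $\sigma_-$ to obtain $\sigma_-(a)+\sigma_-(b)=0$ with $\sigma_-(a)\in\Lambda^3_8$ and $\sigma_-(b)\in\Lambda^3_{27}$; since these are distinct summands of $\Lambda^3$, their sum vanishes only if both do, and injectivity then forces $a=b=0$.

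It remains to handle $c+d=0$, i.e.\ to see $\ast\Lambda^4_8\cap\ast\Lambda^4_{27}=0$. Since $\ast^2=\Id$ on $\Lambda^4\left(\R^8\right)$, applying $\ast$ turns this into $\ast c+\ast d=0$ with $\ast c\in\Lambda^4_8$ and $\ast d\in\Lambda^4_{27}$, which the previous step already shows forces $\ast c=\ast d=0$, hence $c=d=0$. Thus the four summands are independent, the sum is direct, and by the dimension count it equals $\Lambda^4$. I expect the only delicate bookkeeping to be the repeated appeal to lemma \ref{lem:3} in order to keep straight which operator ($\sigma_-$ or $\ast$) separates which pair of isomorphic modules.
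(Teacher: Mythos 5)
Your proof is correct and follows exactly the route the paper intends: the proposition is stated as an immediate consequence of lemma \ref{lem:3}, with $\sigma_+$ providing equivariant isomorphisms $\Lambda^3_i\cong\Lambda^4_i$ for $i=8,27$, the Hodge operator handling the remaining two summands, and part (vi) of that lemma (vanishing of $\sigma_-$ on $\ast\Lambda^4_8\op\ast\Lambda^4_{27}$ versus its injectivity on $\Lambda^4_8\op\Lambda^4_{27}$) separating the isomorphic pairs so that the dimension count $8+27+8+27=70$ closes the argument. Your explicit treatment of directness is precisely the bookkeeping the paper leaves implicit.
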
\noindent
A direct computation yields
\begin{lem}\label{lem:5}
Let $T\in\Lambda^3$ be a non-zero $3$-form. The equation $\rho\wedge T=0$ holds if and only if $T\in\Lambda^3_1\op\Lambda^3_{27}$. Moreover,
\beqn
\rho\hook\left(\rho\wedge T\right)=10\,T\quad\all T\in\Lambda^3_8.
\eeqn
\end{lem}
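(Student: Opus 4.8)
The plan is to derive both assertions from the $\PSU(3)$-equivariance of the maps in question, reducing the whole statement to the computation of a single scalar. Introduce the equivariant maps $\Phi\colon\Lambda^3\ra\Lambda^6$, $\Phi(T):=\rho\wedge T$, and $\Psi\colon\Lambda^3\ra\Lambda^3$, $\Psi(T):=\rho\hook(\rho\wedge T)$; both are equivariant because wedging and contracting with the $\PSU(3)$-invariant form $\rho$ commute with the group action. Since the four summands of $\Lambda^3=\Lambda^3_1\op\Lambda^3_8\op\Lambda^3_{20}\op\Lambda^3_{27}$ are irreducible of real type with the pairwise distinct dimensions $1,8,20,27$, they are pairwise non-isomorphic, and Schur's lemma forces $\Psi$ to act as a scalar $c_i$ on each $\Lambda^3_i$. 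Two of these scalars vanish at once: on $\Lambda^3_1=\{t\,\rho\}$ one has $\rho\wedge\rho=0$ because $\rho$ has odd degree, and on $\Lambda^3_{27}$ the identity $T\wedge\rho=0$ holds by definition, so $c_1=c_{27}=0$ and $\Lambda^3_1\op\Lambda^3_{27}\subseteq\Ker\Phi$.

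For the converse inclusion it is enough to show $c_8>0$ and $c_{20}>0$. The adjunction $\langle\rho\wedge T,\eta\rangle=\langle T,\rho\hook\eta\rangle$ gives, with $\eta=\rho\wedge T$, the identity $c_i\,\|T\|^2=\langle\Psi(T),T\rangle=\|\rho\wedge T\|^2$ for $T\in\Lambda^3_i$; hence $c_i\geq0$, and $c_i=0$ holds if and only if $\Phi$ annihilates the whole module $\Lambda^3_i$, that is, if and only if $\Lambda^3_i\subseteq\Ker\Phi$. Thus establishing $c_8,c_{20}>0$ forces $\Ker\Phi=\Lambda^3_1\op\Lambda^3_{27}$ and so proves the first claim.

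To determine the remaining two scalars I would first compute $\tr\Psi$ combinatorially. Writing $\rho=\sum_J\rho_J\,e_J$ and using the orthonormal basis $\{e_I\}_{|I|=3}$ of $\Lambda^3$, the adjunction yields $\langle\Psi(e_I),e_I\rangle=\|\rho\wedge e_I\|^2=\sum_{J\cap I=\emptyset}\rho_J^2$; summing over $I$, each coefficient $\rho_J^2$ is counted once for every one of the $\binom{5}{3}=10$ index triples $I$ disjoint from $J$, so $\tr\Psi=10\,\|\rho\|^2$. Reading off $\|\rho\|^2=16$ from the explicit form $(\star)$ of $\rho$ produces the single linear relation $8\,c_8+20\,c_{20}=\tr\Psi=160$. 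It then suffices to pin down one eigenvalue, say $c_8$, by evaluating $\Psi$ on one convenient representative of $\Lambda^3_8=\{\ast(\omega\wedge\rho):\omega\in\Lambda^2_8\}$, for instance the element built from $\omega=e_1\hook\rho=-e_{45}-e_{36}+e_{27}+\sqrt3\,e_{28}\in\Lambda^2_8$; this gives $c_8=10$, whence $c_{20}=4$ from the trace relation. In particular $c_8,c_{20}>0$, which completes the first claim, while $c_8=10$ is exactly the asserted identity $\rho\hook(\rho\wedge T)=10\,T$ on $\Lambda^3_8$.

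The crux is this last evaluation. The trace argument alone only produces the relation $8\,c_8+20\,c_{20}=160$ and cannot separate the two eigenvalues — indeed the analogous trace on $\Lambda^2$ gives a relation of the very same shape — so one genuinely has to compute $\Psi$ (equivalently the norm $\|\rho\wedge T\|^2$) on an explicit representative. The actual labor there is the Hodge dualization of the $5$-form $\omega\wedge\rho$ followed by the double contraction against $\rho$ in the coordinates fixed by $(\star)$; keeping $\omega$ as simple as above renders this a finite, if tedious, verification.
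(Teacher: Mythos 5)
Your proposal is correct, and it takes a genuinely more structured route than the paper, whose entire proof is the phrase ``a direct computation yields'' -- i.e.\ an unstructured verification on explicit representatives. Your reduction -- Schur's lemma on the multiplicity-free decomposition $\Lambda^3=\Lambda^3_1\op\Lambda^3_8\op\Lambda^3_{20}\op\Lambda^3_{27}$, the trivial vanishing $c_1=c_{27}=0$, the positivity identity $c_i\,\|T\|^2=\|\rho\wedge T\|^2$ from the adjunction, the trace identity $\tr\Psi=10\,\|\rho\|^2=160$, and a single explicit evaluation on $\Lambda^3_8$ -- compresses the whole lemma into one finite computation, and it yields the constant $c_{20}=4$ on $\Lambda^3_{20}$ as a byproduct (a fact the paper never states). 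The numerical claims you rely on do check out: $\|\rho\|^2=16$; for your representative $T=\ast\left(\omega_1\wedge\rho\right)$ one finds $T=2\,e_{278}-2\,e_{458}+2\sqrt{3}\,e_{457}-2\,e_{368}+2\sqrt{3}\,e_{367}+2\sqrt{3}\,e_{156}-2\sqrt{3}\,e_{134}$ with $\|T\|^2=60$, and $\rho\wedge T=-10\,e_{124578}+10\,e_{134568}-10\sqrt{3}\,e_{134567}-10\,e_{123678}$ with $\|\rho\wedge T\|^2=600$, so $c_8=600/60=10$ and then $c_{20}=(160-80)/20=4>0$, exactly as you assert. What the paper's brute-force approach buys is self-containedness at the cost of several module-by-module computations (in particular a separate check that $\rho\wedge{}$ does not vanish on $\Lambda^3_{20}$); what your approach buys is that all but one of those computations are replaced by soft arguments.

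Two points you should make explicit in a final write-up. First, the crux evaluation $c_8=10$ is deferred rather than executed; it is correct (see above), but a complete proof must contain it -- though on this score the paper is no better. Second, the step from $c_8,c_{20}>0$ to $\Ker\left(\Phi\right)=\Lambda^3_1\op\Lambda^3_{27}$ deserves one more line: a priori an invariant kernel could meet $\Lambda^3_8\op\Lambda^3_{20}$ in a ``diagonal'' submodule containing neither summand. This is excluded either by multiplicity-freeness (the summands have pairwise distinct dimensions, so every invariant subspace is a sum of the given summands), or more directly by noting $\Ker\left(\Phi\right)\subseteq\Ker\left(\Psi\right)$ and that $\Psi\left(T\right)=c_8\,T_8+c_{20}\,T_{20}$ vanishes only if $T_8=T_{20}=0$. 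Finally, your adjunction $\lan\rho\wedge T,\eta\ran=\lan T,\rho\hook\eta\ran$ presumes that $\rho\hook{}$ is the metric adjoint of $\rho\wedge{}$; this is the convention consistent with the positive constant $10$ in the paper's statement, so no sign issue arises, but it is worth saying.
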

\begin{lem}\label{lem:4}
Let $F\in\Lambda^4$ be a non-zero $4$-form. The equation $\rho\hook F=0$ holds if and only if $F\in\Lambda^4_8\op\Lambda^4_{27}\op\ast\Lambda^4_{27}$. Moreover,
\beqn
\rho\wedge\left(\rho\hook F\right)=10\,F\quad\all F\in\ast\Lambda^4_8.
\eeqn
\end{lem}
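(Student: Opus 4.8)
The plan is to regard the contraction $c_\rho\colon\Lambda^4\to\Lambda^1$, $F\mapsto\rho\hook F$, as a $\PSU(3)$-equivariant map and to exploit that $\Lambda^1$ is the irreducible module of dimension $8$. Since $\rho$, the metric and the interior product are all $\PSU(3)$-invariant, $c_\rho$ intertwines the $\PSU(3)$-actions, and in the decomposition $\Lambda^4=\Lambda^4_{8}\op\Lambda^4_{27}\op\ast\Lambda^4_{8}\op\ast\Lambda^4_{27}$ the only summands isomorphic to $\Lambda^1$ are $\Lambda^4_{8}$ and $\ast\Lambda^4_{8}$ (both are copies of the adjoint module, carried over by $\sigma_+$ from $\Lambda^3_8$). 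By Schur's lemma $c_\rho$ therefore vanishes identically on $\Lambda^4_{27}\op\ast\Lambda^4_{27}$, and on each of the two $8$-dimensional summands it is either zero or an isomorphism onto $\Lambda^1$. Thus the whole statement reduces to deciding the behaviour of $c_\rho$ on $\Lambda^4_{8}$ and on $\ast\Lambda^4_{8}$.

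First I would compute the composite $\Lambda^1\xrightarrow{\rho\wedge}\Lambda^4\xrightarrow{c_\rho}\Lambda^1$, $\xi\mapsto\rho\hook(\rho\wedge\xi)$. By equivariance and Schur's lemma this is a scalar multiple of the identity, so it suffices to evaluate it on one vector. As $\rho\wedge(\cdot)$ and $\rho\hook(\cdot)$ are mutually adjoint, $\lan\rho\hook(\rho\wedge e_8),e_8\ran=\lan\rho\wedge e_8,\rho\wedge e_8\ran=\|\rho\wedge e_8\|^2$, and the explicit form $(\star)$ of $\rho$ shows that $\rho\wedge e_8$ consists of the seven pairwise distinct monomials with squared coefficients $1,1,1,1,1,1,4$, whence the scalar equals $10$. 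Hence
\beqn
\rho\hook(\rho\wedge\xi)=10\,\xi\qquad\all\xi\in\Lambda^1.
\eeqn
In particular $\rho\wedge(\cdot)\colon\Lambda^1\to\Lambda^4$ is injective, so $W:=\rho\wedge\Lambda^1$ is an $8$-dimensional submodule contained in $\Lambda^4_{8}\op\ast\Lambda^4_{8}$ on which $c_\rho$ is an isomorphism; in particular $c_\rho\not\equiv 0$.

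To orient the kernel correctly I would show that $c_\rho$ vanishes on $\Lambda^4_{8}$. Since $c_\rho$ restricted to the irreducible module $\Lambda^4_{8}=\sigma_+(\Lambda^3_8)$ is either zero or an isomorphism, it is enough to verify $\rho\hook\sigma_+(T)=0$ for a single nonzero $T\in\Lambda^3_8$; taking $T=\ast(\sigma_+(\xi)\wedge\rho)$ with $\xi=e_8$ (so that $\sigma_+(\xi)\in\Lambda^2_8$ and $T\in\Lambda^3_8$ by their definitions) this is a finite computation from $(\star)$. Granting it, $\Ker c_\rho\supseteq\Lambda^4_{8}\op\Lambda^4_{27}\op\ast\Lambda^4_{27}$, a subspace of dimension $62$; as $c_\rho\not\equiv 0$ it must be an isomorphism on the remaining summand $\ast\Lambda^4_{8}$, so $\Ker c_\rho$ equals exactly this $62$-dimensional space, which is the asserted equivalence.

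It remains to assemble the moreover-part. Because $\rho$ has odd degree, $\rho\wedge\rho=0$, so every element of $W=\rho\wedge\Lambda^1$ is annihilated by $\rho\wedge(\cdot)$. Via the Hodge identity $\rho\hook F=\ast(\rho\wedge\ast F)$, the relation $c_\rho|_{\ast\Lambda^4_{8}}\neq0$ just established is equivalent to $\rho\wedge(\cdot)$ being an isomorphism on $\Lambda^4_{8}$; hence $W\subseteq\Ker\!\big(\rho\wedge(\cdot)\big)\cap(\Lambda^4_{8}\op\ast\Lambda^4_{8})=\ast\Lambda^4_{8}$, and so $W=\ast\Lambda^4_{8}$ by dimension. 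Therefore every $F\in\ast\Lambda^4_{8}$ is of the form $F=\rho\wedge\xi$, and the identity above gives $\rho\hook F=10\,\xi$, so that
\beqn
\rho\wedge(\rho\hook F)=10\,\rho\wedge\xi=10\,F\qquad\all F\in\ast\Lambda^4_{8}.
\eeqn
I expect the main obstacle to be the single explicit verification that $c_\rho$ kills $\Lambda^4_{8}$ rather than $\ast\Lambda^4_{8}$: the representation theory is entirely symmetric in the two $8$-dimensional summands, and only the concrete form $(\star)$ of $\rho$ breaks this symmetry.
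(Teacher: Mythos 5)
Your architecture is sound and genuinely different from the paper's: the paper disposes of lemmata \ref{lem:5} and \ref{lem:4} with the phrase ``a direct computation yields'', i.e.\ by contracting $\rho$ against the explicit modules, whereas you use Schur's lemma (legitimate here, since the paper works with irreducible modules of real type) to reduce the whole statement to two evaluations on single elements. The parts you actually carry out are correct: $c_\rho$ kills $\Lambda^4_{27}\op\ast\Lambda^4_{27}$ for dimension reasons; the composite $c_\rho\circ(\rho\wedge\cdot)$ is a scalar, pinned down by $\lan\rho\hook\left(\rho\wedge e_8\right),e_8\ran=\|\rho\wedge e_8\|^2=10$ (your adjointness convention for $\hook$ is indeed the paper's, as the positive constant in lemma \ref{lem:5} confirms); and the endgame identifying $\rho\wedge\Lambda^1=\ast\Lambda^4_8$ and deducing $\rho\wedge\left(\rho\hook F\right)=10\,F$ is clean. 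What this buys over the paper is that only two small computations are ever needed, and the factor $10$ acquires a meaning, namely $\|\rho\wedge e_8\|^2$.

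The one genuine gap is the step you flag yourself: ``Granting it''. Everything else in your argument is symmetric under exchanging $\Lambda^4_8$ with $\ast\Lambda^4_8$, so the unperformed verification that $c_\rho$ annihilates $\Lambda^4_8$ rather than $\ast\Lambda^4_8$ is not a dispensable loose end; it is precisely the non-formal content of the lemma, and as written your argument only shows that $\Ker c_\rho$ is one of the two spaces $\Lambda^4_8\op\Lambda^4_{27}\op\ast\Lambda^4_{27}$ or $\ast\Lambda^4_8\op\Lambda^4_{27}\op\ast\Lambda^4_{27}$. The check does succeed, and along your proposed lines it is short. With $\xi=e_8$ one gets $\sigma_+\left(e_8\right)=\omega_8=\sqrt{3}\left(e_{12}-e_{34}\right)$, hence $T:=\ast\left(\omega_8\wedge\rho\right)=-2\sqrt{3}\left(e_{348}-e_{128}-\sqrt{3}\,e_{567}\right)\in\Lambda^3_8$, and then
\beqn
\sigma_+\left(T\right)=-6\left(2\,e_{1234}+e_{1467}+e_{2367}-e_{1357}+e_{2457}-e_{1256}-e_{3456}\right),
\eeqn
a non-zero element of $\Lambda^4_8$. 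Every monomial of $\rho\wedge e_i$ has index set $P\cup\left\{i\right\}$ with $P$ an index triple of $\rho$, and none of the seven index sets above contains any of the triples $\left\{2,4,6\right\}$, $\left\{2,3,5\right\}$, $\left\{1,4,5\right\}$, $\left\{1,3,6\right\}$, $\left\{1,2,7\right\}$, $\left\{3,4,7\right\}$, $\left\{5,6,7\right\}$, $\left\{1,2,8\right\}$, $\left\{3,4,8\right\}$ occurring in $\rho$; therefore $\lan\rho\hook\sigma_+\left(T\right),e_i\ran=\lan\sigma_+\left(T\right),\rho\wedge e_i\ran=0$ for all $i$, so $\rho\hook\sigma_+\left(T\right)=0$, and since the kernel is a submodule of the irreducible module $\Lambda^4_8$, $c_\rho$ vanishes on all of $\Lambda^4_8$. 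With this inserted your proof is complete. A minor point: the equality $\Ker\left(\rho\wedge\left(\cdot\right)\right)\cap\left(\Lambda^4_8\op\ast\Lambda^4_8\right)=\ast\Lambda^4_8$ that you invoke needs both dual facts, injectivity of $\rho\wedge\left(\cdot\right)$ on $\Lambda^4_8$ and its vanishing on $\ast\Lambda^4_8$; the latter is again Hodge-dual to the computation just supplied, so it deserves a word.
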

The Lie algebra $\so\left(8\right)$ splits into $\psu\left(3\right)=\Lambda^2_8$ spanned by the forms $\omega_i:=e_i\hook\rho$,
\begin{align*}
\omega_1&=-e_{36}-e_{45}+e_{27}+\sqrt{3}\,e_{28},&
\omega_2&=e_{46}-e_{35}-e_{17}-\sqrt{3}\,e_{18},\\
\omega_3&=e_{16}+e_{25}+e_{47}-\sqrt{3}\,e_{48},&
\omega_4&=-e_{26}+e_{15}-e_{37}+\sqrt{3}\,e_{38},\\
\omega_5&=-e_{14}-e_{23}-2\,e_{67},&
\omega_6&=-e_{13}+e_{24}+2\,e_{57},\\
\omega_7&=e_{12}+e_{34}-2\,e_{56},&
\omega_8&=\sqrt{3}\,e_{12}-\sqrt{3}\,e_{34},
\end{align*}
and its orthogonal complement $\m=\Lambda^2_{20}$. We now decompose $\R^8\ox\m$ into irreducible $\PSU(3)$-modules. Let us define the $\PSU(3)$-equivariant linear maps
\begin{align*}
\Phi_1 &:\R^8\ox\m\ra\Lambda^3, &
\Phi_2 &:\R^8\ox\m\ra\Lambda^4,\\
\Theta_1 &:\Lambda^3\ra\R^8\ox\m, &
\Theta_2 &:\Lambda^4\ra\R^8\ox\m
\end{align*}
by assigning
\begin{align*}
&\Phi_1\left(X\ox\omega\right):=X\wedge\omega, & 
&\Phi_2\left(X\ox\omega\right):=\left(X\hook\rho\right)\wedge\omega,\\
&\Theta_1\left(T\right):=-\frac{1}{2}\sum_ie_i\ox\pr_\m\left(e_i\hook T\right), &
&\Theta_2\left(F\right):=-\sum_ie_i\ox\pr_\m\left(\left(e_i\hook\rho\right)\hook F\right),
\end{align*}
where $\pr_\m$ denotes the projection onto $\m$.
\begin{lem}\label{lem:6}
If $X\in\R^8$ and $F\in\ast\Lambda^4_8\op\ast\Lambda^4_{27}$, then
\beqn
\left(X\hook\rho\right)\hook F\in\m.
\eeqn
\end{lem}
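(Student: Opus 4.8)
My plan is to reduce the statement to a single property of the operator $\sigma_-$. Concretely, I will first establish the identity
\beqn
\left(X\hook\rho\right)\hook F=\pm\,\sigma_-\left(X\hook F\right),
\eeqn
valid for every $X\in\R^8$ and every $F$ with $\sigma_-\left(F\right)=0$, and then show that $\sigma_-$ maps all of $\Lambda^3$ into $\m=\Lambda^2_{20}$. Since $F\in\ast\Lambda^4_8\op\ast\Lambda^4_{27}$ satisfies $\sigma_-\left(F\right)=0$ by lemma \ref{lem:3} (vi), these two facts together give the claim.

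To obtain the displayed identity I would start from $\sigma_-\left(F\right)=\sigma_2\left(\rho,F\right)=0$ and contract with $X$. Applying the Leibniz rule for $X\hook\left(\cdot\wedge\cdot\right)$ to the definition of $\sigma_2$ yields
\beqn
0=X\hook\sigma_2\left(\rho,F\right)=\sum_{a<b}\left(X\hook e_a\hook e_b\hook\rho\right)\left(e_a\hook e_b\hook F\right)-\sum_{a<b}\left(e_a\hook e_b\hook\rho\right)\wedge\left(e_a\hook e_b\hook\left(X\hook F\right)\right),
\eeqn
where in the second sum I have moved $X$ past the two interior products, $X\hook e_a\hook e_b\hook F=e_a\hook e_b\hook\left(X\hook F\right)$. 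The first sum is, up to sign, $\left(X\hook\rho\right)\hook F$, because the scalar $X\hook e_a\hook e_b\hook\rho$ equals $\pm\left(X\hook\rho\right)\left(e_a,e_b\right)$; the second sum is exactly $\sigma_2\left(\rho,X\hook F\right)=\sigma_-\left(X\hook F\right)$. This gives the identity above.

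It remains to check that $\sigma_-\left(S\right)\in\m$ for every $3$-form $S$. The map $\sigma_-\colon\Lambda^3\ra\Lambda^2$ is $\PSU\left(3\right)$-equivariant, and in $\Lambda^2=\Lambda^2_8\op\Lambda^2_{20}$ the summand $\Lambda^2_{20}=\m$ is the only irreducible module isomorphic to $\Lambda^3_{20}$, while neither $\Lambda^3_1$ nor $\Lambda^3_{27}$ is isomorphic to a submodule of $\Lambda^2$. By Schur's lemma $\sigma_-$ annihilates $\Lambda^3_1\op\Lambda^3_{27}$ and sends $\Lambda^3_{20}$ into $\m$, so only the component $\Lambda^3_8\cong\Lambda^2_8$ could contribute a $\Lambda^2_8$-part. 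On the irreducible module $\Lambda^3_8$ the composition $\Lambda^2_8\ra\Lambda^3_8\ra\Lambda^2_8$, $\omega\mapsto\ast\left(\omega\wedge\rho\right)\mapsto\sigma_-\left(\ast\left(\omega\wedge\rho\right)\right)$, is a scalar multiple of the identity; evaluating it on $\omega=\omega_7$ and comparing a single component, say the $e_{12}$-coefficient, shows this scalar is zero by a short explicit computation. Hence $\sigma_-\left(\Lambda^3_8\right)=0$, so $\sigma_-\left(\Lambda^3\right)\subseteq\m$, and applying this to $S=X\hook F$ finishes the proof.

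The main obstacle is the vanishing $\sigma_-|_{\Lambda^3_8}=0$: representation theory only localises the problem to this one module, and the actual vanishing has to be verified by an explicit computation with $\rho$. The sign bookkeeping in the Leibniz step leading to the key identity is the other point requiring care, though it is routine.
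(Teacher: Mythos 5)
Your proof is correct, and it takes a genuinely different route from the paper: the paper states lemma \ref{lem:6} with no argument at all, as one of the algebraic facts verified by direct computation on the explicit expression (\ref{eqn:1}) for $\rho$, whereas you derive it structurally from lemma \ref{lem:3}(vi). Both of your ingredients hold. Contracting $\sigma_2\left(\rho,F\right)=0$ with $X$ and using that interior products by vectors anticommute does give $\left(X\hook\rho\right)\hook F=\pm\,\sigma_-\left(X\hook F\right)$; the sign depends only on the convention for contracting a $2$-form into a $4$-form and is immaterial, since the claim is membership in a linear subspace. The Schur step is also sound, and the point that makes your single-coefficient evaluation legitimate deserves emphasis: because $\Lambda^3_8\cong\Lambda^2_8$ while $\Lambda^3_8\not\cong\Lambda^2_{20}$, the image $\sigma_-\left(\Lambda^3_8\right)$ lies entirely in $\Lambda^2_8$, so $\sigma_-\left(\ast\left(\omega_7\wedge\rho\right)\right)=c\,\omega_7$ exactly, with no $\m$-component contaminating the $e_{12}$-coefficient; only for this reason does one coefficient determine $c$. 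I checked the computation: $\ast\left(\omega_7\wedge\rho\right)=-4\,e_{128}-4\,e_{348}+2\,e_{568}-2\sqrt{3}\,e_{127}+2\sqrt{3}\,e_{347}$, and its image under $\sigma_-$ pairs to zero with $\omega_7$ (indeed it vanishes identically), so $c=0$ and $\sigma_-\left(\Lambda^3\right)\subseteq\m$ as you claim. Comparing the two approaches: yours exposes the mechanism --- the lemma is exactly lemma \ref{lem:3}(vi) combined with the vanishing $\sigma_-\left(\Lambda^3_8\right)=0$ --- and collapses what would otherwise be a check over all $X\in\R^8$ and the $35$-dimensional space $\ast\Lambda^4_8\op\ast\Lambda^4_{27}$ to the evaluation of a single Schur scalar; the paper's implicit brute-force verification needs nothing beyond the module decompositions but gives no insight into why the statement is true, and your intermediate identity relating $\left(X\hook\rho\right)\hook F$, $\sigma_-\left(X\hook F\right)$ and $X\hook\sigma_-\left(F\right)$ is a reusable piece of this calculus.
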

\begin{lem}
$\Lambda^3_1$ is not contained in the image of $\Phi_1$.
\end{lem}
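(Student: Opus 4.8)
The plan is to exploit the $\PSU(3)$-equivariance of $\Phi_1$ together with the orthogonal splitting $\Lambda^2=\Lambda^2_8\op\Lambda^2_{20}$. Since $\Phi_1$ intertwines the $\PSU(3)$-actions, its image is a $\PSU(3)$-submodule of $\Lambda^3$, hence a direct sum of some of the pairwise inequivalent irreducibles $\Lambda^3_1,\Lambda^3_8,\Lambda^3_{20},\Lambda^3_{27}$. Because $\Lambda^3_1=\R\cdot\rho$ is irreducible, the inclusion $\Lambda^3_1\subseteq\Ima\,\Phi_1$ holds if and only if $\rho\in\Ima\,\Phi_1$, which in turn is equivalent to the non-vanishing of $\lan\Phi_1\left(X\ox\omega\right),\rho\ran$ for some $X\in\R^8$ and $\omega\in\m$. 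I would therefore reduce the claim to showing that this pairing vanishes identically for all such $X$ and $\omega$.

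First I would rewrite the pairing using the fact that, for the inner product $\lan\cdot,\cdot\ran=\sigma_k$, exterior multiplication by the $1$-form $X$ is adjoint to contraction by $X$, so that
\[
\lan\Phi_1\left(X\ox\omega\right),\rho\ran=\lan X\wedge\omega,\rho\ran=\lan\omega,X\hook\rho\ran.
\]
Next I would observe that $X\hook\rho$ is precisely the kind of $2$-form exhibited in the basis $\omega_i=e_i\hook\rho$ of $\psu(3)=\Lambda^2_8$; indeed $X\hook\rho\in\Lambda^2_8$ for every $X\in\R^8$. Since $\omega$ lies in $\m=\Lambda^2_{20}$ and the decomposition $\Lambda^2=\Lambda^2_8\op\Lambda^2_{20}$ is orthogonal, the pairing $\lan\omega,X\hook\rho\ran$ must vanish.

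Combining these two steps yields $\lan\Phi_1\left(X\ox\omega\right),\rho\ran=0$ for all $X\in\R^8$ and $\omega\in\m$, so the $\Lambda^3_1$-component of every element of $\Ima\,\Phi_1$ is zero and $\Lambda^3_1\not\subseteq\Ima\,\Phi_1$, as claimed. I expect the only point requiring care to be the adjoint relation between $X\wedge(\cdot)$ and $X\hook(\cdot)$ with respect to $\sigma_k$, together with the attendant sign conventions; once that is secured the argument is immediate and needs no explicit component computation. The conceptual crux is simply the identification $X\hook\rho\in\Lambda^2_8$, which is built into the very definition of $\psu(3)$ through the generators $\omega_i$.
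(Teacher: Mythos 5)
Your proof is correct and is essentially the paper's argument in Hodge-dual form: the paper supposes $\rho=X\wedge\omega$ and gets the contradiction $\rho\wedge\ast\rho=X\wedge\left(\omega\wedge\ast\rho\right)=0$ directly from the definition $\Lambda^2_{20}=\left\{\omega\setsep\omega\wedge\ast\rho=0\right\}$, while you prove the equivalent statement $\lan X\wedge\omega,\rho\ran=\lan\omega,X\hook\rho\ran=0$ using the orthogonality $\m\perp\Lambda^2_8$. Both hinge on exactly the same fact, namely that elements of $\m$ pair trivially against $\rho$, so the approaches coincide (your version merely makes explicit the linearity/submodule bookkeeping that the paper leaves implicit for sums of simple tensors).
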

\begin{proof}
Suppose there exist $X\in\R^8$ and $\omega\in\m$ such that $\rho=X\wedge\omega$. It follows that
\beqn
\rho\wedge\ast\rho=\left(X\wedge\omega\right)\wedge\ast\rho=X\wedge\left(\omega\wedge\ast\rho\right)=0,
\eeqn
a contradiction.
\end{proof}
\begin{lem}\label{lem:1}
The image of $\Phi_1$ is $\Lambda^3_8\op\Lambda^3_{20}\op\Lambda^3_{27}$ and the maps $\Phi_1\circ\Theta_1$, $\Phi_1\circ\Theta_2$ satisfy
\bite
\item[i)] $\dsty\Phi_1\circ\Theta_1\left(T\right)=-\tfrac{1}{2}T\quad\all T\in\Lambda^3_8$,
\item[ii)] $\dsty\Phi_1\circ\Theta_1\left(T\right)=-T\quad\all T\in\Lambda^3_{20}$,
\item[iii)] $\dsty\Phi_1\circ\Theta_1\left(T\right)=-\tfrac{4}{3}T\quad\all T\in\Lambda^3_{27}$,
\item[iv)] $\dsty\Phi_1\circ\Theta_2\left(F\right)=0\quad\all F\in\ast\Lambda^4_8\op\ast\Lambda^4_{27}$.
\eite
\end{lem}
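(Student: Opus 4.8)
The plan is to run everything through $\PSU(3)$-equivariance and Schur's lemma. Since $\rho$ is $\PSU(3)$-invariant and $\Phi_1,\Theta_1,\Theta_2$ are assembled from $\rho$ and the equivariant operations $\hook$, $\wedge$, $\pr_\m$, both $\Phi_1\circ\Theta_1\colon\Lambda^3\to\Lambda^3$ and $\Phi_1\circ\Theta_2\colon\Lambda^4\to\Lambda^3$ are $\PSU(3)$-equivariant. As all summands are irreducible of real type, Schur forces $\Phi_1\circ\Theta_1$ to act as a real scalar $c_i$ on each $\Lambda^3_i$ in $\Lambda^3=\Lambda^3_1\op\Lambda^3_8\op\Lambda^3_{20}\op\Lambda^3_{27}$, and forces $\Phi_1\circ\Theta_2$ to be a scalar multiple of the canonical isomorphism onto $\Lambda^3_8$ (resp.\ $\Lambda^3_{27}$) on $\ast\Lambda^4_8$ (resp.\ $\ast\Lambda^4_{27}$). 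Thus i)--iv) reduce to pinning down finitely many scalars.

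For the image claim I would argue that $\Ima\Phi_1$, being equivariant, is a submodule, hence a sum of the four irreducibles. The preceding lemma excludes $\Lambda^3_1$. Conversely, once i)--iii) exhibit $c_8,c_{20},c_{27}$ as the \emph{nonzero} scalars $-\tfrac12,-1,-\tfrac43$, the endomorphism $\Phi_1\circ\Theta_1$ is bijective on each of $\Lambda^3_8,\Lambda^3_{20},\Lambda^3_{27}$, so each lies in $\Ima(\Phi_1\circ\Theta_1)\subseteq\Ima\Phi_1$. Together this gives $\Ima\Phi_1=\Lambda^3_8\op\Lambda^3_{20}\op\Lambda^3_{27}$.

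To compute the scalars in i)--iii) I would use the diagonal, symmetric form of $\Phi_1\circ\Theta_1$. For $T\in\Lambda^3_i$ the adjunction $\langle e_i\wedge\alpha,T\rangle=\langle\alpha,e_i\hook T\rangle$ and orthogonality of $\Lambda^2=\psu\op\m$ yield
\[
\langle\Phi_1\circ\Theta_1(T),T\rangle=-\tfrac12\,\tsum_i\|\pr_\m(e_i\hook T)\|^2 .
\]
Using $\sum_i\|e_i\hook T\|^2=3\|T\|^2$, the fact that $\{\omega_k=e_k\hook\rho\}$ is an orthogonal basis of $\psu$ with $\|\omega_k\|^2=6$, and $\langle e_i\hook T,\omega_k\rangle=\langle T,e_i\wedge\omega_k\rangle$, this becomes
\[
c_i\,\|T\|^2=-\tfrac32\|T\|^2+\tfrac1{12}\,\tsum_k\|\omega_k\hook T\|^2 .
\]
Hence the heart of i)--iii) is the invariant quantity $\sum_k\|\omega_k\hook T\|^2$, which I expect to equal $12\|T\|^2$, $6\|T\|^2$, $2\|T\|^2$ on $\Lambda^3_8,\Lambda^3_{20},\Lambda^3_{27}$. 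I would organize $T\mapsto\tfrac16\sum_k\omega_k\wedge(\omega_k\hook T)$ as the invariant endomorphism dual to the $\psu$-Casimir tensor, exploit the relation $C_\psu+C_\m=3\,\Id$ coming from the analogous full-$\Lambda^2$ computation to reduce the work, and evaluate on explicit representatives from $\Lambda^3_8=\ast(\Lambda^2_8\wedge\rho)$, $\Lambda^3_{20}=\sigma_+(\m)$ and $\Lambda^3_{27}$. This explicit bookkeeping with the components of $\rho$ and the $\omega_k$ is the main obstacle; everything else is formal.

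Part iv) admits a computation-free argument. For $F\in\ast\Lambda^4_8\op\ast\Lambda^4_{27}$, Lemma \ref{lem:6} gives $(e_i\hook\rho)\hook F\in\m$, so the projection in $\Theta_2$ is redundant and $\Phi_1\circ\Theta_2(F)=-\sum_i e_i\wedge((e_i\hook\rho)\hook F)$. Pairing with an arbitrary $T\in\Lambda^3$ and applying the adjunctions for $e_i\hook$ and for contraction by the $2$-form $e_i\hook\rho$ gives $\langle\Phi_1\circ\Theta_2(F),T\rangle=-\langle F,\sum_i(e_i\hook\rho)\wedge(e_i\hook T)\rangle=-\langle F,\sigma_+(T)\rangle$. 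Now $\sigma_+$ kills $\Lambda^3_1$ and $\Lambda^3_{20}$ (there is no trivial and no $20$-dimensional summand in $\Lambda^4$) and sends $\Lambda^3_8,\Lambda^3_{27}$ onto $\Lambda^4_8,\Lambda^4_{27}$, so $\Ima\sigma_+=\Lambda^4_8\op\Lambda^4_{27}$, which is orthogonal to $\ast\Lambda^4_8\op\ast\Lambda^4_{27}\ni F$. Thus the pairing vanishes for every $T$, forcing $\Phi_1\circ\Theta_2(F)=0$.
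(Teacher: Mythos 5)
Your reduction is set up correctly: equivariance plus Schur's lemma (all summands irreducible, of real type, and pairwise non-isomorphic inside $\Lambda^3$) does reduce i)--iii) to one scalar per module, your identity $c_i\|T\|^2=-\tfrac32\|T\|^2+\tfrac1{12}\sum_k\|\omega_k\hook T\|^2$ is a correct consequence of the adjunctions and of $\|\omega_k\|^2=6$, and the derivation of the image statement from i)--iii) together with the preceding lemma is sound. But the proposal stops exactly where the content of the lemma lies: the values $\sum_k\|\omega_k\hook T\|^2=12\|T\|^2,\ 6\|T\|^2,\ 2\|T\|^2$ on $\Lambda^3_8$, $\Lambda^3_{20}$, $\Lambda^3_{27}$ are only ``expected'' -- indeed they are reverse-engineered from the constants $-\tfrac12,-1,-\tfrac43$ you are trying to prove -- and you explicitly defer their verification. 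The relation $C_{\psu}+C_{\m}=3\,\Id$ only ties the $\psu$- and $\m$-parts together and cannot determine the split, so until the invariant is evaluated on an explicit representative of each module (which is precisely the ``direct computation'' the paper's lemma rests on), nothing about the constants has been established.

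There is a second, unacknowledged gap in iv). Your duality computation correctly shows $\lan\Phi_1\circ\Theta_2\left(F\right),T\ran=-\lan F,\sigma_+\left(T\right)\ran$, so iv) is equivalent to the orthogonality of $\Ima\,\sigma_+=\Lambda^4_8\op\Lambda^4_{27}$ and $\ast\Lambda^4_8\op\ast\Lambda^4_{27}$ -- but that orthogonality is not the formality you treat it as. $\Lambda^4_8$ and $\ast\Lambda^4_8$ are isomorphic $\PSU(3)$-modules (both copies of the adjoint representation), as are $\Lambda^4_{27}$ and $\ast\Lambda^4_{27}$, so these pairs lie in the same isotypic components and Schur gives no orthogonality; the paper's proposition asserts only a direct-sum splitting. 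Concretely, since $\lan\alpha,\ast\beta\ran\vol=\alpha\wedge\beta$, your claim is equivalent to $\sigma_+\left(\Lambda^3\right)$ being isotropic for the wedge pairing on $\Lambda^4$, which is a genuine fact needing proof. It can be supplied without brute force -- for instance, lemma \ref{lem:4} gives $\ast\Lambda^4_8=\rho\wedge\Lambda^1$ and $\rho\hook\sigma_+\left(\Lambda^3\right)=0$, which handles the $\ast\Lambda^4_8$ part, and an orientation-reversing isometry stabilizing $\rho$ (induced by the outer automorphism $X\mapsto\bar{X}$ of $\su(3)$) commutes with $\sigma_+$, preserves each $\Lambda^3_i$, and sends $\vol$ to $-\vol$, forcing the invariant form $\left(T,T'\right)\mapsto\sigma_+\left(T\right)\wedge\sigma_+\left(T'\right)$ to vanish identically -- but some such argument must be given. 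As it stands, both the constants in i)--iii) and the orthogonality behind iv) are asserted rather than proved.
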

\begin{lem}\label{lem:2}
$\Phi_2$ is surjective and the maps $\Phi_2\circ\Theta_1$, $\Phi_2\circ\Theta_2$ satisfy
\bite
\item[i)] $\dsty\Phi_2\circ\Theta_1\left(T\right)=\tfrac{1}{2}\,\sigma_+\left(T\right)\quad\all T\in\Lambda^3_8$,
\item[ii)] $\dsty\Phi_2\circ\Theta_1\left(T\right)=-\tfrac{1}{3}\,\sigma_+\left(T\right)\quad\all T\in\Lambda^3_{27}$,
\item[iii)] $\dsty\Phi_2\circ\Theta_2\left(F\right)=-18\,F\quad\all F\in\ast\Lambda^4_8$,
\item[iv)] $\dsty\Phi_2\circ\Theta_2\left(F\right)=-8\,F\quad\all F\in\ast\Lambda^4_{27}$.
\eite
\end{lem}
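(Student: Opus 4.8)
The plan is to use that $\Phi_2$, $\Theta_1$, $\Theta_2$ are $\PSU(3)$-equivariant, so that $\Phi_2\circ\Theta_1\colon\Lambda^3\ra\Lambda^4$ and $\Phi_2\circ\Theta_2\colon\Lambda^4\ra\Lambda^4$ are equivariant; by Schur's lemma each restricts, on an irreducible source module, to a fixed linear combination of the (at most two) equivariant maps into the corresponding isotypic component of $\Lambda^4$, so every identity is pinned down by a single evaluation on one explicit representative. First I would note which summands survive: on $\Lambda^3_1=\{t\,\rho\}$ one has $e_i\hook\rho\in\psu(3)$, hence $\pr_\m(e_i\hook\rho)=0$ and $\Theta_1(\rho)=0$, and since $\Lambda^4$ contains no $20$-dimensional $\PSU(3)$-module, Schur kills $\Phi_2\circ\Theta_1$ on $\Lambda^3_{20}$; this is why only $\Lambda^3_8$ and $\Lambda^3_{27}$ occur. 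Granting i)--iv), the surjectivity of $\Phi_2$ is then immediate, for i) and ii) place $\Lambda^4_8=\sigma_+(\Lambda^3_8)$ and $\Lambda^4_{27}=\sigma_+(\Lambda^3_{27})$ in $\Ima\Phi_2$, while iii) and iv) place $\ast\Lambda^4_8$ and $\ast\Lambda^4_{27}$ there, so the image spans all of $\Lambda^4$.

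For i) and ii) I would expand $\Phi_2\circ\Theta_1(T)=-\tfrac12\sum_i(e_i\hook\rho)\wedge\pr_\m(e_i\hook T)$ and write $\pr_\m=\Id-\pr_{\psu(3)}$. The identity part gives $-\tfrac12\sum_i(e_i\hook\rho)\wedge(e_i\hook T)=-\tfrac12\,\sigma_1(\rho,T)=-\tfrac12\,\sigma_+(T)$, which already lies in the correct summand. For the remaining piece I would use $\pr_{\psu(3)}=\tfrac16\,\sigma_+\circ\sigma_-$, a consequence of \ref{lem:3} i) together with $\sigma_-|_\m=0$ (Schur), so that it too lands in the $8$-dimensional (resp.\ $27$-dimensional) isotypic part. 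A single evaluation on an explicit $T\in\Lambda^3_8$ and one on $T\in\Lambda^3_{27}$ then fix the total coefficients as $\tfrac12$ and $-\tfrac13$ and show that the $\ast$-components vanish.

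For iii) and iv), \ref{lem:6} lets me drop the projection: for $F\in\ast\Lambda^4_8\op\ast\Lambda^4_{27}$ one has $(e_i\hook\rho)\hook F\in\m$, so $\Phi_2\circ\Theta_2(F)=-B(F)$ with $B(F):=\sum_i(e_i\hook\rho)\wedge\big((e_i\hook\rho)\hook F\big)$. Here $B$ is self-adjoint ($\wedge$ and $\hook$ being mutually adjoint) and satisfies $\langle B(F),F\rangle=\sum_i\|(e_i\hook\rho)\hook F\|^2\ge0$, so $\Phi_2\circ\Theta_2=-B$ has non-positive eigenvalues, consistent with $-18$ and $-8$. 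To locate the eigenspaces I would use that $R(F):=\rho\wedge(\rho\hook F)$ equals $10\,\pr_{\ast\Lambda^4_8}$ by \ref{lem:4}, which singles out $\ast\Lambda^4_8$ inside the $8$-dimensional isotypic part; one explicit evaluation on a representative of $\ast\Lambda^4_8$ (resp.\ $\ast\Lambda^4_{27}$) then yields $B(F)=18\,F$ (resp.\ $8\,F$), whence $\Phi_2\circ\Theta_2(F)=-18\,F$ (resp.\ $-8\,F$).

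The main obstacle is the multiplicity two of the $8$- and $27$-dimensional $\PSU(3)$-types in $\Lambda^4$, where $\Lambda^4_i$ occurs together with $\ast\Lambda^4_i$: Schur's lemma alone does not distinguish the target summand, so one must rule out the $\ast$-component in i), ii) and confirm that $\ast\Lambda^4_8$, $\ast\Lambda^4_{27}$ are genuine eigenspaces in iii), iv). This is exactly what a single clean evaluation settles: if it returns precisely the claimed multiple of $\sigma_+(T)$ (resp.\ of $F$) with no complementary component, then the cross-term intertwiner, being a fixed scalar multiple of an isomorphism that annihilates a nonzero vector, vanishes on the whole module, and the eigenvalue is fixed simultaneously. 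The only genuine labor is therefore these few explicit evaluations against the form $\rho$ of \eqref{eqn:1}.
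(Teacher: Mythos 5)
Your proposal is correct and matches what the paper implicitly intends: lemma \ref{lem:2} is stated there without proof as a direct computation, and the paper's whole setup of $\PSU(3)$-equivariant maps between explicitly decomposed irreducible modules is precisely the Schur-plus-evaluation scheme you describe. All of your reductions check out — in particular $\pr_{\psu(3)}=\tfrac{1}{6}\,\sigma_+\circ\sigma_-$ on $\Lambda^2$ (from lemma \ref{lem:3} i) and Schur), the use of lemma \ref{lem:6} to drop $\pr_\m$ in iii) and iv), and the observation that a single clean evaluation simultaneously fixes the eigenvalue and kills the cross intertwiner into the second copy of each isotypic component — leaving only the finite explicit evaluations that the paper itself also omits.
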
\noindent
Let us introduce the following subspaces of $\R^8\ox\m$:
\begin{align*}
\W_1&:=\Theta_1\left(\Lambda^3_8\right), &
\W_2&:=\Theta_1\left(\Lambda^3_{20}\right), &
\W_3&:=\Theta_1\left(\Lambda^3_{27}\right), \\
\W_4&:=\Theta_2\left(\ast\Lambda^4_8\right), &
\W_5&:=\Theta_2\left(\ast\Lambda^4_{27}\right), &
\W_6&:=\Ker\left(\Phi_1\right)\cap\Ker\left(\Phi_2\right).
\end{align*}
Since there exists only one $70$-dimensional irreducible $\PSU(3)$-module in the decomposition of $\R^8\ox\m$, we deduce the following using lemmata \ref{lem:1} and \ref{lem:2}:
\begin{thm}\label{thm:1}
The space $\R^8\ox\m$ splits into six irreducible $\PSU(3)$-modules:
\beqn
\R^8\ox\m=\W_1\op\W_2\op\W_3\op\W_4\op\W_5\op\W_6.
\eeqn
\end{thm}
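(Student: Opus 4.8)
The plan is to construct the six summands directly from the structure maps $\Phi_1,\Phi_2,\Theta_1,\Theta_2$ and then to match them against the abstract decomposition of $\R^8\ox\m$ into irreducibles. First I would read off from Lemmata \ref{lem:1} and \ref{lem:2} that $\Theta_1$ is injective on $\Lambda^3_8\op\Lambda^3_{20}\op\Lambda^3_{27}$ and $\Theta_2$ is injective on $\ast\Lambda^4_8\op\ast\Lambda^4_{27}$: indeed $\Phi_1\circ\Theta_1$ equals a nonzero scalar multiple of the identity on each of $\Lambda^3_8,\Lambda^3_{20},\Lambda^3_{27}$ (parts i)--iii) of Lemma \ref{lem:1}), and $\Phi_2\circ\Theta_2$ equals a nonzero scalar on each of $\ast\Lambda^4_8,\ast\Lambda^4_{27}$ (parts iii)--iv) of Lemma \ref{lem:2}). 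Since $\Theta_1,\Theta_2$ are $\PSU(3)$-equivariant, the images $\W_1,\ldots,\W_5$ are then irreducible $\PSU(3)$-submodules of $\R^8\ox\m$ isomorphic to $\Lambda^3_8,\Lambda^3_{20},\Lambda^3_{27},\ast\Lambda^4_8,\ast\Lambda^4_{27}$, hence of dimensions $8,20,27,8,27$.

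Next I would show that the sum $\W_1+\cdots+\W_5$ is direct, of dimension $90$, by separating the summands with $\Phi_1$ and $\Phi_2$. Suppose $w_1+\cdots+w_5=0$ with $w_i\in\W_i$. Applying $\Phi_1$ and using $\Phi_1(\W_4)=\Phi_1(\W_5)=0$ (Lemma \ref{lem:1} iv)) leaves $\Phi_1(w_1)+\Phi_1(w_2)+\Phi_1(w_3)=0$; as these three terms lie in the distinct summands $\Lambda^3_8,\Lambda^3_{20},\Lambda^3_{27}$ of $\Lambda^3$, each vanishes, whence $w_1=w_2=w_3=0$ by injectivity of $\Phi_1$ on $\W_1,\W_2,\W_3$. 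The relation then reads $w_4+w_5=0$; applying $\Phi_2$ and using that $\Phi_2(\W_4)\subset\ast\Lambda^4_8$ and $\Phi_2(\W_5)\subset\ast\Lambda^4_{27}$ lie in distinct summands of $\Lambda^4$ forces $w_4=w_5=0$ by injectivity of $\Phi_2$ on $\W_4,\W_5$. The identical computation shows $\W_6\cap(\W_1\op\cdots\op\W_5)=0$, since any element of this intersection lies in $\Ker\Phi_1\cap\Ker\Phi_2$ and the argument above then kills all of its components.

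It remains to identify $\W_6$ and to close the dimension count, and here the representation theory of $\PSU(3)$ enters. Complexifying, $\R^8\ox\C$ is the adjoint module $\mathbf 8$ and $\m\ox\C\cong\mathbf{10}\op\overline{\mathbf{10}}$ (consistent with $\Lambda^2(\mathbf 8)=\mathbf 8\op\mathbf{10}\op\overline{\mathbf{10}}$); using $\mathbf 8\ox\mathbf{10}=\mathbf{35}\op\mathbf{27}\op\mathbf{10}\op\mathbf 8$ and its conjugate, one finds that $\R^8\ox\m$ decomposes into real irreducibles of dimensions $8,8,20,27,27,70$, the $70$-dimensional module $U$ occurring with multiplicity one. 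Since $\Ima\Phi_1\subset\Lambda^3$ and $\Ima\Phi_2\subset\Lambda^4$ contain no $70$-dimensional constituent (their constituents have dimensions $1,8,20,27$ and $8,27,8,27$ respectively), Schur's lemma forces $\Phi_1|_U=0$ and $\Phi_2|_U=0$, so $U\subset\Ker\Phi_1\cap\Ker\Phi_2=\W_6$ and thus $\dim\W_6\ge 70$. On the other hand $\W_6\cap(\W_1\op\cdots\op\W_5)=0$ together with $\dim(\R^8\ox\m)=160$ gives $\dim\W_6\le 70$. Hence $\W_6=U$ is irreducible of dimension $70$, and the direct sum $\W_1\op\cdots\op\W_6$ has dimension $90+70=160$, so it exhausts $\R^8\ox\m$.

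The step I expect to be the real obstacle is pinning down the abstract decomposition, and in particular the multiplicity-one occurrence of the $70$-dimensional module, since everything else is forced by Lemmata \ref{lem:1} and \ref{lem:2} through Schur's lemma and elementary linear algebra. One must either carry out the $\slin(3,\C)$ Clebsch--Gordan computation carefully (tracking which copies are of real versus complex type, so that the realified dimensions come out as $8,8,20,27,27,70$) or quote it; granting that input, the identification $\W_6=\Ker\Phi_1\cap\Ker\Phi_2$ and the irreducibility of all six summands follow at once.
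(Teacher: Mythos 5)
Your proposal is correct and follows essentially the same route as the paper, whose entire proof is the remark that the $70$-dimensional irreducible module occurs exactly once in $\R^8\ox\m$ together with an appeal to Lemmata \ref{lem:1} and \ref{lem:2}; you have simply made explicit the injectivity, directness, Schur's lemma, and dimension-count steps that the paper leaves implicit. No discrepancies to report.
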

%
%
%
%
\section{The sixty-four classes of \texorpdfstring{$\PSU\left(3\right)$}{PSU(3)}-structures}\label{sec:3}\noindent
We define a \emph{$\PSU(3)$-structure/manifold} as a triple $\left(M^8,g,\rho\right)$ consisting of a Riemannian $8$-manifold $\left(M^8,g\right)$ and a $3$-form $\rho$ such that there exists an oriented orthonormal frame $\left(e_1,\ldots,e_8\right)$ realizing (\ref{eqn:1}) at every point of $M^8$. Here and henceforth we identify $TM^8$ with its dual space ${TM^8}^{\ast}$ using $g$. We will call $\left(e_1,\ldots,e_8\right)$ an \emph{adapted frame} and $\rho$ the \emph{fundamental form} of the $\PSU(3)$-structure.

Consider a $\PSU(3)$-structure $\left(M^8,g,\rho\right)$. We denote by $\left(e_1,\ldots,e_8\right)$ a corresponding adapted frame from now on. The connection forms
\beqn
\omega_{ij}^g:=g\left(\nabla^{g}e_i,e_j\right)
\eeqn
of the Levi-Civita connection $\nabla^g$ define a $1$-form 
\beqn
\Omega^g:=\left(\omega_{ij}^g\right)_{1\leq i,j\leq 8}
\eeqn
with values in the Lie algebra $\so(8)$. We define the \emph{intrinsic torsion} $\Gamma$ of $\left(M^8,g,\rho\right)$ as
\beqn
\Gamma := \pr_\m\left(\Omega^g\right).
\eeqn
Since the Riemannian covariant derivative of the fundamental form $\rho$ is given by
\beqn
\nabla^{g}\rho = \vrho_\ast\left(\Gamma\right)\left(\rho\right),
\eeqn
$\PSU(3)$-structures can be classified by the algebraic type of $\Gamma$ with respect to the decomposition of $\R^8\ox\m$ into irreducible $\PSU(3)$-modules (cf.\ \cite{Fri03}). Applying theorem \ref{thm:1}, we split $\Gamma$ as
\beqn
\Gamma = \Gamma_1+\Gamma_2+\Gamma_3+\Gamma_4+\Gamma_5+\Gamma_6,
\eeqn
to the effect that $64$ classes arise. Via the maps $\Theta_1$ and $\Theta_2$ (see \autoref{sec:2}), we identify the component $\Gamma_1+\Gamma_2+\Gamma_3$ with a $3$-form
\beqn
T^c=T^c_8+T^c_{20}+T^c_{27}\in\Lambda^3_8\op\Lambda^3_{20}\op\Lambda^3_{27}
\eeqn
on $\left(M^8,g,\rho\right)$ and $\Gamma_4+\Gamma_5$ with a $4$-form
\beqn
F^c=F^c_8+F^c_{27}\in\ast\Lambda^4_8\op\ast\Lambda^4_{27}.
\eeqn
We will call $T^c,F^c$ the \emph{characteristic forms} of $\left(M^8,g,\rho\right)$. $\PSU(3)$-structures with $\Gamma=0$ are called \emph{integrable}. We will say that $\left(M^8,g,\rho\right)$ \emph{is of type} $\W_{i_1}\op\ldots\op\W_{i_j}$ if
\beqn
\Gamma\in\W_{i_1}\op\ldots\op\W_{i_j}.
\eeqn
Moreover, $\left(M^8,g,\rho\right)$ \emph{is of strict type} $\W_{i_1}\op\ldots\op\W_{i_j}$ if
the structure is of type $\W_{i_1}\op\ldots\op\W_{i_j}$ and $\Gamma_{i_k}\neq0$ for $k=1,\ldots,j$.
%
%
%
%
\section{Differential equations characterizing the classes}\label{sec:4}\noindent
To begin with, we compute the differential and the co-differential of the fundamental form $\rho$. The differential $d\alpha$ of a $k$-form $\alpha$ on $\left(M^8,g,\rho\right)$ is given by
\beqn
d\alpha=\sum_i e_i\wedge\nabla^{g}_{e_i}\alpha.
\eeqn
Using the formula for the Riemannian covariant derivative of $\rho$, we obtain
\beqn
d\rho=\sum_ie_i\wedge\vrho_\ast\left(\Gamma\left(e_i\right)\right)\left(\rho\right)=\sum_ie_i\wedge\sigma_1\left(\Gamma\left(e_i\right),\rho\right)=:\Pi_1\left(\Gamma\right).
\eeqn
The map
\beqn
\Pi_1:\R^8\ox\m\ra\Lambda^4
\eeqn
is $\PSU(3)$-equivariant. Moreover, we deduce the following:
\begin{lem}
$\Pi_1$ is surjective and the maps $\Pi_1\circ\Theta_1$, $\Pi_1\circ\Theta_2$ satisfy
\bite
\item[i)] $\dsty\Pi_1\circ\Theta_1\left(T\right)=\sigma_+\left(T\right)\quad\all T\in\Lambda^3_8\op\Lambda^3_{27}$,
\item[ii)] $\dsty\Pi_1\circ\Theta_2\left(F\right)=-18\,F\quad\all F\in\ast\Lambda^4_8$,
\item[iii)] $\dsty\Pi_1\circ\Theta_2\left(F\right)=-8\,F\quad\all F\in\ast\Lambda^4_{27}$.
\eite
\end{lem}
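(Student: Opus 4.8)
The plan is to reduce the whole statement to the already-established Lemmas \ref{lem:1} and \ref{lem:2} by proving the single operator identity
\beqn
\Pi_1=\Phi_2-\sigma_+\circ\Phi_1
\eeqn
on $\R^8\ox\m$. Once this is in hand, each of the three formulas follows by composing with $\Theta_1$, $\Theta_2$ and inserting the known values of $\Phi_1\circ\Theta_i$ and $\Phi_2\circ\Theta_i$, while surjectivity follows from the fact (recorded after Lemma \ref{lem:3}) that $\sigma_+$ restricts to an isomorphism of $\Lambda^3_i$ onto $\Lambda^4_i$ for $i=8,27$.

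To establish the identity it suffices to evaluate both sides on a simple tensor $X\ox\omega$ with $X\in\R^8$ and $\omega\in\m$, since these span the domain. First I would observe that $\Pi_1\left(X\ox\omega\right)=X\wedge\sigma_1\left(\omega,\rho\right)=X\wedge\sigma_+\left(\omega\right)$, using the symmetry of $\sigma_1$ on $\Lambda^2\x\Lambda^3$. Then I would expand
\beqn
\sigma_+\left(X\wedge\omega\right)=\sigma_1\left(\rho,X\wedge\omega\right)=\sum_i\left(e_i\hook\rho\right)\wedge\left(e_i\hook\left(X\wedge\omega\right)\right)
\eeqn
via the antiderivation rule $e_i\hook\left(X\wedge\omega\right)=\langle e_i,X\rangle\,\omega-X\wedge\left(e_i\hook\omega\right)$. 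The first resulting sum collapses to $\left(X\hook\rho\right)\wedge\omega=\Phi_2\left(X\ox\omega\right)$, and, after commuting $X$ past the $2$-form $e_i\hook\rho$, the second sum becomes $X\wedge\sigma_1\left(\rho,\omega\right)=\Pi_1\left(X\ox\omega\right)$. Hence $\sigma_+\left(\Phi_1\left(X\ox\omega\right)\right)=\Phi_2\left(X\ox\omega\right)-\Pi_1\left(X\ox\omega\right)$, which is the claimed identity. The only delicate point is the bookkeeping of the Koszul signs in this interior-product expansion; everything else is mechanical.

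With the identity at our disposal, the three formulas drop out. For $T\in\Lambda^3_8$ I would combine $\Phi_2\circ\Theta_1\left(T\right)=\tfrac{1}{2}\sigma_+\left(T\right)$ with $\Phi_1\circ\Theta_1\left(T\right)=-\tfrac{1}{2}T$ to obtain $\Pi_1\circ\Theta_1\left(T\right)=\tfrac{1}{2}\sigma_+\left(T\right)+\tfrac{1}{2}\sigma_+\left(T\right)=\sigma_+\left(T\right)$; for $T\in\Lambda^3_{27}$ the values $-\tfrac{1}{3}$ and $-\tfrac{4}{3}$ combine in the same way to give $\sigma_+\left(T\right)$ again, which proves (i). For $F\in\ast\Lambda^4_8\op\ast\Lambda^4_{27}$ the term $\sigma_+\left(\Phi_1\circ\Theta_2\left(F\right)\right)$ vanishes because $\Phi_1\circ\Theta_2=0$ by Lemma \ref{lem:1}, so $\Pi_1\circ\Theta_2=\Phi_2\circ\Theta_2$, and Lemma \ref{lem:2} supplies the scalars $-18$ and $-8$, proving (ii) and (iii).

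Finally, for surjectivity: since $\sigma_+$ maps $\Lambda^3_8$ and $\Lambda^3_{27}$ isomorphically onto $\Lambda^4_8$ and $\Lambda^4_{27}$, part (i) forces $\Lambda^4_8\op\Lambda^4_{27}\subseteq\Ima\left(\Pi_1\right)$, while the nonzero factors $-18$ and $-8$ in (ii) and (iii) yield $\ast\Lambda^4_8\op\ast\Lambda^4_{27}\subseteq\Ima\left(\Pi_1\right)$. These four submodules exhaust $\Lambda^4$, so $\Pi_1$ is onto.
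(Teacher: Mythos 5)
Your proposal is correct, and the argument goes through exactly as you sketch it. The key identity $\Pi_1=\Phi_2-\sigma_+\circ\Phi_1$ holds: on a simple tensor $X\ox\omega$ one has $e_i\hook\left(X\wedge\omega\right)=\lan e_i,X\ran\,\omega-X\wedge\left(e_i\hook\omega\right)$, and since the $2$-form $e_i\hook\rho$ commutes with the $1$-form $X$ (and $\sigma_1$ is symmetric on $\Lambda^2\x\Lambda^3$), summation gives $\sigma_+\left(X\wedge\omega\right)=\left(X\hook\rho\right)\wedge\omega-X\wedge\sigma_+\left(\omega\right)$, which is the claimed identity; the Koszul signs you flag as the delicate point do come out right. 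The constants then combine as you say ($\tfrac{1}{2}+\tfrac{1}{2}$ on $\Lambda^3_8$, $-\tfrac{1}{3}+\tfrac{4}{3}$ on $\Lambda^3_{27}$, and $\Phi_1\circ\Theta_2=0$ kills the correction term on $\ast\Lambda^4_8\op\ast\Lambda^4_{27}$), and surjectivity follows because $\sigma_+\left(\Lambda^3_i\right)=\Lambda^4_i$ by definition and the four summands exhaust $\Lambda^4$. Note that the paper states this lemma without proof: like the neighbouring lemmata it is presented as the outcome of a direct computation with the explicit expression $(\star)$ for $\rho$, so your route is genuinely different. It is basis-free, it exhibits the constants of this lemma as forced by those of lemmata \ref{lem:1} and \ref{lem:2} rather than as an independent coincidence, and it concentrates all sign bookkeeping in a two-line antiderivation computation. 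What a direct computation buys instead is independence: it checks these constants without presupposing the correctness of the earlier lemmata, which your reduction inherits wholesale.
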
\noindent
The co-differential $\delta\alpha$ of a $k$-form $\alpha$ on $\left(M^8,g,\rho\right)$ can be computed via
\beqn
\delta\alpha=-\sum_i e_i\hook\nabla^{g}_{e_i}\alpha.
\eeqn
Again, we use the formula for $\nabla^g\rho$:
\beqn
\delta\rho=-\sum_ie_i\hook\vrho_\ast\left(\Gamma\left(e_i\right)\right)\left(\rho\right)=-\sum_ie_i\hook\sigma_1\left(\Gamma\left(e_i\right),\rho\right)=:\Pi_2\left(\Gamma\right).
\eeqn
The map
\beqn
\Pi_2:\R^8\ox\m\ra\Lambda^2
\eeqn
is $\PSU(3)$-equivariant. A direct computation yields here
\begin{lem}
$\Pi_2$ is surjective and the maps $\Pi_2\circ\Theta_1$, $\Pi_2\circ\Theta_2$ satisfy
\bite
\item[i)] $\dsty\Pi_2\circ\Theta_1\left(T\right)=-\sigma_-\left(T\right)\quad\all T\in\Lambda^3_{20}$,
\item[ii)] $\dsty\Pi_2\circ\Theta_2\left(F\right)=-3\,\sigma_+\left(\rho\hook F\right)\quad\all F\in\ast\Lambda^4_8$.
\eite
\end{lem}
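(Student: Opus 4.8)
The plan is to combine $\PSU(3)$-equivariance with two explicit contraction computations. Since $\Pi_2$, $\Theta_1$ and $\Theta_2$ are all $\PSU(3)$-equivariant, so are the compositions, and Schur's lemma forces each of them to be a scalar multiple of a fixed equivariant isomorphism on every irreducible summand and to vanish between non-isomorphic summands. As the target $\Lambda^2=\Lambda^2_8\op\Lambda^2_{20}$ contains only the $8$- and $20$-dimensional types, the components of type $27$ are killed: $\Pi_2\circ\Theta_1$ vanishes on $\Lambda^3_{27}$ and $\Pi_2\circ\Theta_2$ on $\ast\Lambda^4_{27}$, while the stated components map into $\Lambda^2_{20}$ respectively $\Lambda^2_8$. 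This isolates the two formulas to be proved. Before computing, I would record the simplification that $\sigma_1(\,\cdot\,,\rho)=\vrho_\ast(\,\cdot\,)(\rho)$ annihilates $\psu(3)=\Lambda^2_8$; hence in $\Pi_2(\Gamma)=-\sum_i e_i\hook\sigma_1(\Gamma(e_i),\rho)$ one may drop every $\pr_\m$, using in case (ii) the fact from lemma \ref{lem:6} that $(e_i\hook\rho)\hook F$ already lies in $\m$.

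For (i) the computation is short and yields the formula on all of $\Lambda^3$. Writing $\Pi_2(\Theta_1(T))=\tfrac12\sum_i e_i\hook\sigma_1(e_i\hook T,\rho)$ and expanding $\sigma_1$, I would apply the graded derivation rule for $e_i\hook$. The contracted diagonal term carries a factor $e_i\hook(e_k\hook e_i\hook T)=T(e_i,e_k,e_i)=0$ and therefore drops out because $T$ is alternating; the surviving double sum is, after one use of the anticommutativity $e_i\hook e_k\hook\rho=-e_k\hook e_i\hook\rho$ and of the $1$-form anticommutativity in the wedge, exactly $-2\,\sigma_2(\rho,T)=-2\,\sigma_-(T)$. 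Together with the prefactor $\tfrac12$ this gives $\Pi_2\circ\Theta_1(T)=-\sigma_-(T)$, which restricted to $\Lambda^3_{20}$ is the claim.

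For (ii) the same expansion of $\Pi_2(\Theta_2(F))=\sum_i e_i\hook\sigma_1((e_i\hook\rho)\hook F,\rho)$ now leaves two genuinely nonzero contributions: a trace-type term $\sum_{i,k}\big((e_i\hook\rho)\hook F\big)(e_k,e_i)\,(e_k\hook\rho)$ and a double-contraction term in which $\rho$ appears twice alongside $F$. By Schur this composition equals $c'$ times the equivariant map $F\mapsto\sigma_+(\rho\hook F)$, so it suffices to fix the scalar $c'$. To do so I would invoke the quadratic identity of lemma \ref{lem:4}, namely $\rho\wedge(\rho\hook F)=10\,F$ for $F\in\ast\Lambda^4_8$, and parametrize $\ast\Lambda^4_8$ by $\xi:=\rho\hook F\in\Lambda^1$, so that $F=\tfrac{1}{10}\,\rho\wedge\xi$ and the target becomes simply $\sigma_+(\xi)=\sigma_1(\rho,\xi)$. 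Substituting this and collapsing the repeated $\rho$-contractions against $F$ by the standard index identities for $\rho$ produces the scalar, which one finds to be $-3$, giving $\Pi_2\circ\Theta_2(F)=-3\,\sigma_+(\rho\hook F)$.

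Finally, surjectivity is immediate from the two formulas: by lemma \ref{lem:3}(iv) one has $\sigma_+\circ\sigma_-=12\,\Id$ on $\Lambda^3_{20}$, so $\sigma_-$ maps $\Lambda^3_{20}$ isomorphically onto $\Lambda^2_{20}$, whence the image of $\Pi_2$ contains $\Lambda^2_{20}$; and by lemma \ref{lem:4} together with lemma \ref{lem:3}(i) the map $F\mapsto\sigma_+(\rho\hook F)$ is nonzero on $\ast\Lambda^4_8$, so the image also contains the irreducible $\Lambda^2_8$. Hence $\Pi_2$ is onto. The main obstacle is the constant $-3$ in (ii): unlike (i), where the alternating property of $T$ kills the trace term outright and the remainder is manifestly $\sigma_-$, here both terms survive and each intertwines two copies of $\rho$ with $F$, so determining the coefficient requires the genuine quadratic relation of lemma \ref{lem:4} (or an explicit evaluation on a single $F=\tfrac1{10}\,\rho\wedge\xi$) rather than formal manipulation alone.
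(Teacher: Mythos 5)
Your proposal is correct and is essentially the paper's own (unwritten) argument: the paper disposes of this lemma with ``a direct computation yields'', and your combination of $\PSU(3)$-equivariance/Schur reduction with the two explicit contraction computations is exactly such a computation, with both constants checking out ($-1$ in (i), where the trace term indeed dies by antisymmetry of $T$, and $-3$ in (ii), which I verified by evaluating on $F=\rho\wedge e_8$, for which $\rho\hook F=10\,e_8$ and $\Pi_2\circ\Theta_2(F)=-30\,\omega_8$). Your use of lemma \ref{lem:6} and of the annihilation of $\psu(3)=\Lambda^2_8$ by $\sigma_1(\,\cdot\,,\rho)$ to drop the projections $\pr_\m$, and of lemmata \ref{lem:3} and \ref{lem:4} for surjectivity, are all legitimate and consistent with the paper's framework.
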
\noindent
We now summerize the results obtained so far.
\begin{thm}
Let $\left(M^8,g,\rho\right)$ be a $\PSU(3)$-structure with characteristic forms
\begin{align*}
T^c&=T^c_8+T^c_{20}+T^c_{27}, & F^c&=F^c_8+F^c_{27}.
\end{align*}
The differential and the co-differential of $\rho$ are completely determined by $T^c$ and $F^c$:
\begin{align*}
d\rho&=\sigma_+\left(T^c_8+T^c_{27}\right)-18\,F^c_8-8\,F^c_{27}, &
\delta\rho&=-\sigma_-\left(T^c_{20}\right)-3\,\sigma_+\left(\rho\hook F^c_8\right).
\end{align*}
\end{thm}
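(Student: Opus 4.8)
The plan is to exploit the two identities $d\rho=\Pi_1(\Gamma)$ and $\delta\rho=\Pi_2(\Gamma)$ established just above, where $\Pi_1:\R^8\ox\m\ra\Lambda^4$ and $\Pi_2:\R^8\ox\m\ra\Lambda^2$ are $\PSU(3)$-equivariant. By Theorem \ref{thm:1} I decompose the intrinsic torsion as $\Gamma=\Gamma_1+\cdots+\Gamma_6$ with $\Gamma_i\in\W_i$, and by the identification preceding the theorem I write $\Gamma_1=\Theta_1(T^c_8)$, $\Gamma_2=\Theta_1(T^c_{20})$, $\Gamma_3=\Theta_1(T^c_{27})$, $\Gamma_4=\Theta_2(F^c_8)$, $\Gamma_5=\Theta_2(F^c_{27})$ and $\Gamma_6\in\W_6$. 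Applying $\Pi_1$ and $\Pi_2$ linearly then reduces the whole computation to evaluating $\Pi_1\circ\Theta_1$, $\Pi_1\circ\Theta_2$, $\Pi_2\circ\Theta_1$, $\Pi_2\circ\Theta_2$ on the individual irreducible summands, together with the two residual terms $\Pi_1(\Gamma_6)$ and $\Pi_2(\Gamma_6)$.

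For the nonzero contributions I invoke the two preceding lemmas directly. For the differential, $\Pi_1\circ\Theta_1(T)=\sigma_+(T)$ on $\Lambda^3_8\op\Lambda^3_{27}$ produces $\sigma_+(T^c_8+T^c_{27})$, while $\Pi_1\circ\Theta_2(F)=-18\,F$ on $\ast\Lambda^4_8$ and $=-8\,F$ on $\ast\Lambda^4_{27}$ produce $-18\,F^c_8-8\,F^c_{27}$. For the co-differential, $\Pi_2\circ\Theta_1(T)=-\sigma_-(T)$ on $\Lambda^3_{20}$ gives $-\sigma_-(T^c_{20})$ and $\Pi_2\circ\Theta_2(F)=-3\,\sigma_+(\rho\hook F)$ on $\ast\Lambda^4_8$ gives $-3\,\sigma_+(\rho\hook F^c_8)$. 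It then remains to show that every other summand contributes zero, namely that $\Pi_1$ annihilates $\W_2=\Theta_1(\Lambda^3_{20})$ and $\W_6$, and that $\Pi_2$ annihilates $\W_1=\Theta_1(\Lambda^3_8)$, $\W_3=\Theta_1(\Lambda^3_{27})$, $\W_5=\Theta_2(\ast\Lambda^4_{27})$ and $\W_6$.

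Most of these vanishings are immediate from Schur's lemma: since $\Pi_1,\Pi_2$ are equivariant and the relevant summands are irreducible, a map must vanish whenever the target carries no isomorphic copy of the source. The target $\Lambda^4=\Lambda^4_8\op\Lambda^4_{27}\op\ast\Lambda^4_8\op\ast\Lambda^4_{27}$ has no $20$- or $70$-dimensional summand, so $\Pi_1|_{\W_2}=0$ and $\Pi_1|_{\W_6}=0$; the target $\Lambda^2=\Lambda^2_8\op\Lambda^2_{20}$ has no $27$- or $70$-dimensional summand, so $\Pi_2$ kills $\W_3$, $\W_5$ and $\W_6$. The one case where this argument fails, and which I expect to be the main obstacle, is $\Pi_2(\Gamma_1)=\Pi_2\circ\Theta_1(T^c_8)$: since $\Lambda^3_8$ is by definition the image of $\Lambda^2_8$ under $\omega\mapsto\ast(\omega\wedge\rho)$, the modules $\Lambda^3_8$ and $\Lambda^2_8=\psu(3)$ are isomorphic, so Schur permits a nonzero equivariant map $\Lambda^3_8\ra\Lambda^2_8$ and the vanishing of this contribution is a genuine computation rather than a representation-theoretic formality. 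I would settle it from the explicit evaluation underlying the $\Pi_2$-lemma, which exhibits $\Pi_2\circ\Theta_1$ as supported only on $\Lambda^3_{20}$; concretely, by unwinding $\Pi_2(\Theta_1(T))=\tfrac12\sum_i e_i\hook\sigma_1(\pr_\m(e_i\hook T),\rho)$ for $T\in\Lambda^3_8$ and checking that it vanishes. Granting this, summing the surviving terms yields exactly $d\rho=\sigma_+(T^c_8+T^c_{27})-18\,F^c_8-8\,F^c_{27}$ and $\delta\rho=-\sigma_-(T^c_{20})-3\,\sigma_+(\rho\hook F^c_8)$.
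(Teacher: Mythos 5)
Your proposal is correct, and its skeleton is exactly the paper's: the paper also obtains the theorem by combining $d\rho=\Pi_1\left(\Gamma\right)$, $\delta\rho=\Pi_2\left(\Gamma\right)$ with the two lemmas evaluating $\Pi_1\circ\Theta_1$, $\Pi_1\circ\Theta_2$, $\Pi_2\circ\Theta_1$, $\Pi_2\circ\Theta_2$ on the irreducible summands; its ``proof'' is literally the sentence ``we now summerize the results obtained so far.'' Where you genuinely add something is in the treatment of the components that contribute nothing. The paper's lemmas, as stated, record only the nonzero compositions, and the vanishing of the remaining ones is buried in the unstated ``direct computation''; you make these vanishings explicit and dispose of all but one by Schur's lemma through dimension counts, which is legitimate because the $\W_i$ are irreducible and $\Lambda^4=\Lambda^4_8\op\Lambda^4_{27}\op\ast\Lambda^4_8\op\ast\Lambda^4_{27}$ has no summand of dimension $20$ or $70$, while $\Lambda^2=\Lambda^2_8\op\Lambda^2_{20}$ has none of dimension $27$ or $70$. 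You also correctly isolate the single case this argument cannot reach, namely $\Pi_2\circ\Theta_1$ on $\Lambda^3_8$, since $\Lambda^3_8\cong\Lambda^2_8$ are both copies of the adjoint module, and you reduce it to checking that $\tfrac{1}{2}\sum_i e_i\hook\sigma_1\left(\pr_\m\left(e_i\hook T\right),\rho\right)=0$ for $T\in\Lambda^3_8$. That check is genuine but routine, and your own Schur observation makes it short: the map $\Lambda^2_8\ra\Lambda^2_8$, $\omega\mapsto\Pi_2\left(\Theta_1\left(\ast\left(\omega\wedge\rho\right)\right)\right)$ is a multiple of the identity, so it suffices to compute one coefficient for one element, e.g.\ the $e_{12}$-coefficient for $\omega=\omega_7$, which indeed comes out to zero. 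So your argument is complete modulo that one evaluation, and it is arguably tighter than the paper's, which never states the vanishing results its theorem implicitly uses.
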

Based on this theorem, we now deduce differential equations characterizing the algebraic type of $\Gamma$. With the aid of lemma \ref{lem:3}, we compute
\begin{align*}
\delta\rho\hook\rho=\sigma_-\left(\delta\rho\right)&=-\left(\sigma_-\circ\sigma_-\right)\left(T^c_{20}\right)-3\left(\sigma_-\circ\sigma_+\right)\left(\rho\hook F^c_8\right)=-18\left(\rho\hook F^c_8\right),\\
\sigma_+\left(\delta\rho\right)&=-\left(\sigma_+\circ\sigma_-\right)\left(T^c_{20}\right)-3\left(\sigma_+\circ\sigma_+\right)\left(\rho\hook F^c_8\right)=-12\,T^c_{20}
\end{align*}
for the co-differential of the fundamental form. We then examine the differential of the fundamental form in a similar way:
\begin{align*}
\sigma_-\left(d\rho\right)&=\left(\sigma_-\circ\sigma_+\right)\left(T^c_8+T^c_{27}\right)-18\,\sigma_-\left(F^c_8\right)-8\,\sigma_-\left(F^c_{27}\right)\\
&=6\,T^c_8+16\,T^c_{27},\\
\sigma_-\left(\ast d\rho\right)&=\sigma_-\left(\ast\sigma_+\left(T^c_8+T^c_{27}\right)\right)-18\,\sigma_-\left(\ast F^c_8\right)-8\,\sigma_-\left(\ast F^c_{27}\right)\\
&=-18\,\sigma_-\left(\ast F^c_8\right)-8\,\sigma_-\left(\ast F^c_{27}\right).
\end{align*}
Using lemma \ref{lem:4}, we additionally obtain 
\begin{align*}
\rho\wedge\left(\rho\hook d\rho\right)&=\rho\wedge\left(-18\,\rho\hook F^c_8\right)=-180\, F^c_8,\\
\rho\wedge\left(\rho\hook \ast d\rho\right)&=\rho\wedge\left(\rho\hook\ast\sigma_+\left(T^c_8\right)\right)=10\,\ast\sigma_+\left(T^c_8\right).
\end{align*}
All these equations together with lemma \ref{lem:4} prove
\begin{thm}\label{thm:3}
Let $\left(M^8,g,\rho\right)$ be a $\PSU(3)$-structure. The following equivalences hold:
\btab{c|c}
			$\left(M^8,g,\rho\right)$ is of type & $\rho$ satisfies\\
			\hline\hline
			$\W_2\op\W_3\op\W_4\op\W_5\op\W_6$ & $\rho\hook\ast d\rho=0$\\
			\hline
			$\W_1\op\W_3\op\W_4\op\W_5\op\W_6$ & $6\,\delta\rho=\left(\delta\rho\hook\rho\right)\hook\rho$\\
			\hline
			$\W_1\op\W_2\op\W_4\op\W_5\op\W_6$ & $\sigma_-\left(10\,d\rho-\ast\left(\rho\wedge\left(\rho\hook \ast d\rho\right)\right)\right)=0$\\
			\hline
			$\W_1\op\W_2\op\W_3\op\W_5\op\W_6$ & $\delta\rho\hook\rho=0\quad$ or $\quad\rho\hook d\rho=0$\\
			\hline
			$\W_1\op\W_2\op\W_3\op\W_4\op\W_6$ & $\sigma_-\left(10\,\ast d\rho-\ast\left(\rho\wedge\left(\rho\hook d\rho\right)\right)\right)=0$\\
			\hline
			$\W_2\op\W_4\op\W_5\op\W_6$ & $\sigma_-\left(d\rho\right)=0$\\
			\hline
			$\W_1\op\W_2\op\W_3\op\W_6$ & $\sigma_-\left(\ast d\rho\right)=0$\\
			\hline
			$\W_1\op\W_3\op\W_5\op\W_6$ & $\delta\rho=0$\\
			\hline
			$\W_1\op\W_2\op\W_6$ & $10\,\ast d\rho=\rho\wedge\left(\rho\hook \ast d\rho\right)$\\
			\hline
			$\W_2\op\W_4\op\W_6$ & $10\,d\rho=\rho\wedge\left(\rho\hook d\rho\right)$\\
			\hline
			$\W_2\op\W_6$ & $d\rho=0$\\
			\hline
			$\W_6$ & $d\rho=0\quad$ and $\quad\delta\rho=0$
\etab
\end{thm}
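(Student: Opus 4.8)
The plan is to reduce the whole table to a single dictionary between the vanishing of individual characteristic-form components and the differential conditions on $\rho$, and then assemble each row by intersecting entries of this dictionary. The starting observation is that, since $\Theta_1$ and $\Theta_2$ restrict to $\PSU(3)$-equivariant isomorphisms onto $\W_1,\ldots,\W_5$, the five visible components $T^c_8,T^c_{20},T^c_{27},F^c_8,F^c_{27}$ are in bijection with $\Gamma_1,\ldots,\Gamma_5$, whereas $\Gamma_6\in\W_6=\Ker(\Phi_1)\cap\Ker(\Phi_2)$ contributes to neither $d\rho$ nor $\delta\rho$. Hence ``$\left(M^8,g,\rho\right)$ is of type $\W_{i_1}\op\ldots\op\W_{i_j}$'' is equivalent to the simultaneous vanishing of exactly those components among $T^c_8,T^c_{20},T^c_{27},F^c_8,F^c_{27}$ whose index does not occur in $\{i_1,\ldots,i_j\}$, the summand $\W_6$ being automatically admissible.

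Next I would build the dictionary from the identities already computed for $d\rho$, $\delta\rho$ and their images under $\sigma_\pm$, $\ast$ and $\rho\wedge(\rho\hook\cdot)$, combined with the injectivity statements packaged in lemmata \ref{lem:3} and \ref{lem:4}. Concretely, $\delta\rho\hook\rho=\sigma_-(\delta\rho)=-18\,(\rho\hook F^c_8)$ gives $F^c_8=0\Leftrightarrow\delta\rho\hook\rho=0$, because $\rho\hook\cdot$ is injective on $\ast\Lambda^4_8$ by lemma \ref{lem:4}; likewise $\rho\hook d\rho=-18\,(\rho\hook F^c_8)$ yields the alternative criterion $\rho\hook d\rho=0$ for the same row. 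Similarly $\sigma_+(\delta\rho)=-12\,T^c_{20}$ isolates $T^c_{20}$, while $\rho\hook\ast d\rho=\rho\hook(\ast\sigma_+(T^c_8))$ together with $\rho\wedge(\rho\hook\ast d\rho)=10\,\ast\sigma_+(T^c_8)$ gives $T^c_8=0\Leftrightarrow\rho\hook\ast d\rho=0$. The remaining two components appear only entangled, in $\sigma_-(d\rho)=6\,T^c_8+16\,T^c_{27}$ and $\sigma_-(\ast d\rho)=-18\,\sigma_-(\ast F^c_8)-8\,\sigma_-(\ast F^c_{27})$. The structural point that makes everything separate cleanly is that whenever one operator expression sees two components at once, those two land in \emph{distinct} irreducible summands of the target; for instance $6\,T^c_8+16\,T^c_{27}\in\Lambda^3_8\op\Lambda^3_{27}$, and in $\delta\rho$ the pieces $\sigma_-(T^c_{20})\in\Lambda^2_{20}$ and $\sigma_+(\rho\hook F^c_8)\in\Lambda^2_8$ lie in different summands, so vanishing of a sum forces vanishing of each term.

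The composite expressions in the table are then precisely the linear combinations that cancel one entangled component and retain the other. Feeding $\rho\wedge(\rho\hook\ast d\rho)=10\,\ast\sigma_+(T^c_8)$ back through $\ast$ and $\sigma_-$, using $\ast^2=\Id$ on $\Lambda^4$ and $\sigma_-\circ\sigma_+=6$ on $\Lambda^3_8$ from lemma \ref{lem:3}, gives $\sigma_-\!\left(10\,d\rho-\ast\left(\rho\wedge(\rho\hook\ast d\rho)\right)\right)=160\,T^c_{27}$, isolating $T^c_{27}$; symmetrically, $\rho\wedge(\rho\hook d\rho)=-180\,F^c_8$ leads to $\sigma_-\!\left(10\,\ast d\rho-\ast\left(\rho\wedge(\rho\hook d\rho)\right)\right)=-80\,\sigma_-(\ast F^c_{27})$, which isolates $F^c_{27}$ via the injectivity of $\sigma_-$ on $\Lambda^4_{27}=\sigma_+(\Lambda^3_{27})$ (again lemma \ref{lem:3}). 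With all five single-component criteria in hand, each row follows by intersection: rows omitting a single module are immediate, rows omitting several (e.g.\ $\delta\rho=0$ for type $\W_1\op\W_3\op\W_5\op\W_6$, equivalent to $T^c_{20}=F^c_8=0$) follow from the direct-sum argument above, and the bottom row $d\rho=0,\ \delta\rho=0$ forces all five visible components to vanish, leaving only $\W_6$.

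I expect the main obstacle to be the bookkeeping underlying the second and third steps: for each operator applied one must track exactly which $\PSU(3)$-submodule every term lands in and confirm that the operator in question ($\rho\hook\cdot$, $\sigma_+$, $\sigma_-$, or $\rho\wedge(\rho\hook\cdot)$) is injective on that submodule and annihilates the others. All of these facts are supplied by lemmata \ref{lem:3} and \ref{lem:4} together with Hodge-degree and $\ast^2$ tracking, so no new algebra is required beyond careful assembly; the only genuine verification is that the specific linear combinations written in the table produce the clean single-component outputs claimed above.
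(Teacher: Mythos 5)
Your proposal is correct and takes essentially the same route as the paper: it derives exactly the same operator identities (applying $\sigma_\pm$, $\ast$, $\rho\hook\cdot$ and $\rho\wedge(\rho\hook\cdot)$ to the formulas for $d\rho$ and $\delta\rho$ in terms of $T^c$, $F^c$) and then reads off each row using lemmata \ref{lem:3} and \ref{lem:4}. The only difference is presentational: you spell out the module-by-module injectivity and direct-sum bookkeeping that the paper compresses into its final sentence ``all these equations together with lemma \ref{lem:4} prove the theorem.''
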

Moreover, lemma \ref{lem:5} enables us to express $T^c$ and $F^c$ in terms of $\rho$ as follows:
\begin{prop}
Let
\begin{align*}
T^c&=T^c_8+T^c_{20}+T^c_{27}, & F^c&=F^c_8+F^c_{27}
\end{align*}
be the characteristic forms of a $\PSU(3)$-structure $\left(M^8,g,\rho\right)$. Then
\begin{align*}
T^c_8&=\frac{1}{60}\,\rho\hook\left(\rho\wedge\sigma_-\left(d\rho\right)\right), &
T^c_{20}&=-\frac{1}{12}\,\sigma_+\left(\delta\rho\right), &
T^c_{27}&=\frac{1}{16}\left(\sigma_-\left(d\rho\right)-6\,T^c_8\right),
\end{align*}
\begin{align*}
F^c_8&=-\frac{1}{180}\,\rho\wedge\left(\rho\hook d\rho\right)=-\frac{1}{180}\,\rho\wedge\left(\delta\rho\hook\rho\right), &
F^c_{27}&=-\frac{1}{8}\,\left(d\rho-\sigma_+\left(T^c_8+T^c_{27}\right)+18\,F^c_8\right). 
\end{align*}
\end{prop}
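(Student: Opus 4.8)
The plan is to \emph{invert} the linear relations expressing $d\rho$ and $\delta\rho$ in terms of the characteristic forms. Recall from the theorem above that $d\rho=\sigma_+\left(T^c_8+T^c_{27}\right)-18\,F^c_8-8\,F^c_{27}$ and $\delta\rho=-\sigma_-\left(T^c_{20}\right)-3\,\sigma_+\left(\rho\hook F^c_8\right)$, and that the computations preceding theorem \ref{thm:3} already isolate several of the irreducible pieces. Since every operator involved ($\sigma_+$, $\sigma_-$, and wedging or contracting with $\rho$) is $\PSU(3)$-equivariant, each acts as a scalar on the individual summands $\Lambda^3_i$, $\ast\Lambda^4_i$. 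The strategy throughout is therefore to apply the operator that annihilates all but the desired summand and then rescale by the corresponding eigenvalue supplied by lemmata \ref{lem:3}, \ref{lem:4} and \ref{lem:5}.

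First I would read off $T^c_{20}$ and $F^c_8$ directly. The identity $\sigma_+\left(\delta\rho\right)=-12\,T^c_{20}$ established before theorem \ref{thm:3} gives $T^c_{20}=-\tfrac{1}{12}\,\sigma_+\left(\delta\rho\right)$. For $F^c_8$, the identity $\rho\wedge\left(\rho\hook d\rho\right)=-180\,F^c_8$ yields the first expression; the second follows because $\delta\rho\hook\rho=-18\left(\rho\hook F^c_8\right)$, while lemma \ref{lem:4} gives $\rho\wedge\left(\rho\hook F^c_8\right)=10\,F^c_8$ since $F^c_8\in\ast\Lambda^4_8$, so that $F^c_8=\tfrac{1}{10}\,\rho\wedge\left(\rho\hook F^c_8\right)=-\tfrac{1}{180}\,\rho\wedge\left(\delta\rho\hook\rho\right)$.

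The step I expect to be the crux is isolating $T^c_8$, since $\sigma_-\left(d\rho\right)=6\,T^c_8+16\,T^c_{27}$ mixes the two modules $\Lambda^3_8$ and $\Lambda^3_{27}$. Here I would exploit lemma \ref{lem:5}: wedging with $\rho$ annihilates $\Lambda^3_{27}$ but not $\Lambda^3_8$, so $\rho\wedge\sigma_-\left(d\rho\right)=6\,\rho\wedge T^c_8$, and contracting back with $\rho$ (again by lemma \ref{lem:5}, where $\rho\hook\left(\rho\wedge T\right)=10\,T$ on $\Lambda^3_8$) gives $\rho\hook\left(\rho\wedge\sigma_-\left(d\rho\right)\right)=60\,T^c_8$, which is the stated formula. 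Once $T^c_8$ is known, $T^c_{27}$ follows immediately by subtraction from $\sigma_-\left(d\rho\right)=6\,T^c_8+16\,T^c_{27}$, giving $T^c_{27}=\tfrac{1}{16}\left(\sigma_-\left(d\rho\right)-6\,T^c_8\right)$.

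Finally, with $T^c_8$, $T^c_{20}$, $T^c_{27}$ and $F^c_8$ all determined, the only remaining unknown in the expression $d\rho=\sigma_+\left(T^c_8+T^c_{27}\right)-18\,F^c_8-8\,F^c_{27}$ is $F^c_{27}$, and solving for it yields $F^c_{27}=-\tfrac{1}{8}\left(d\rho-\sigma_+\left(T^c_8+T^c_{27}\right)+18\,F^c_8\right)$. No genuine difficulty remains beyond the bookkeeping of eigenvalues; the one place where care is needed is the separation of the $\Lambda^3_8$ and $\Lambda^3_{27}$ contributions in $\sigma_-\left(d\rho\right)$, which is precisely where lemma \ref{lem:5} does the essential work.
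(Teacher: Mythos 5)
Your proposal is correct and follows essentially the same route as the paper: it uses the eigenvalue computations preceding Theorem \ref{thm:3} (namely $\sigma_+\left(\delta\rho\right)=-12\,T^c_{20}$, $\rho\wedge\left(\rho\hook d\rho\right)=-180\,F^c_8$, $\delta\rho\hook\rho=-18\left(\rho\hook F^c_8\right)$ and $\sigma_-\left(d\rho\right)=6\,T^c_8+16\,T^c_{27}$), and then invokes lemma \ref{lem:5} exactly as the paper intends — wedging with $\rho$ to kill the $\Lambda^3_{27}$ part and contracting back via $\rho\hook\left(\rho\wedge T\right)=10\,T$ on $\Lambda^3_8$ — before solving for $T^c_{27}$ and $F^c_{27}$ by substitution. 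No gaps; the eigenvalue bookkeeping and the use of lemma \ref{lem:4} for the second expression of $F^c_8$ are all accurate.
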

%
%
%
%
\section{The characteristic connection}\label{sec:5}\noindent
Let $\nabla$ be a metric connection on $\left(M^8,g,\rho\right)$. Its torsion $\T$, viewed as a $\left(3,0\right)$-tensor,
\beqn
\T\left(X,Y,Z\right)=g\left(\nabla_XY-\nabla_YX-\left[X,Y\right],Z\right),
\eeqn
is an element of the space
\beqn
\mfr{T}:=\left\{\T\in\Lambda^1\ox\Lambda^1\ox\Lambda^1\setsep\T\left(X,Y,Z\right)+\T\left(Y,X,Z\right)=0\right\}.
\eeqn
$\mfr{T}$ splits into three irreducible $\Orth(8)$-modules (see \cite{Car25}),
\beqn
\mfr{T}=\mfr{T}_8\op\mfr{T}_{56}\op\mfr{T}_{160},
\eeqn
where
\beqn
\mfr{T}_8:=\left\{\T\in\mfr{T}\setsep\ex V\in TM^8:\T\left(X,Y,Z\right)=g\left(g\left(V,X\right)Y-g\left(V,Y\right)X,Z\right)\right\},
\eeqn
$\mfr{T}_{56}:=\Lambda^3$ is the space of $3$-forms on $\left(M^8,g,\rho\right)$ and
\beqn
\mfr{T}_{160}:=\left\{\T\in\mfr{T}\setsep\mfr{S}_{X,Y,Z}\T\left(X,Y,Z\right)=0\ \text{and}\ \textstyle{\sum_i}\T\left(X,e_i,e_i\right)=0\right\}.
\eeqn
Here $\mfr{S}_{X,Y,Z}$ denotes the cyclic sum over $X,Y,Z$. We will say that $\T$ is \emph{totally skew-symmetric} if $\T\in\mfr{T}_{56}$. If $\T\in\mfr{T}_8$, we will call $\T$ \emph{vectorial}. We will furthermore say that $\T$ is \emph{cyclic} if $\T\in\mfr{T}_8\op\mfr{T}_{160}$.

We will now consider the metric connection $\nabla$ on $\left(M^8,g,\rho\right)$ defined by
\beqn
g\left(\nabla_XY,Z\right)=g\left(\nabla^{g}_XY,Z\right)+\frac{1}{2}\, T\left(X,Y,Z\right)+\left(\left(X\hook\rho\right)\hook F\right)\left(Y,Z\right),
\eeqn
where $T\in\Lambda^3_8\op\Lambda^3_{20}\op\Lambda^3_{27}$ and $ F\in\ast\Lambda^4_8\op\ast\Lambda^4_{27}$. Since
\beqn
\mfr{S}_{X,Y,Z}\left(\left(X\hook\rho\right)\hook F\right)\left(Y,Z\right)=0
\eeqn
holds for all $F\in\ast\Lambda^4_8\op\ast\Lambda^4_{27}$, the torsion tensor $\T$ of $\nabla$ is given by
\beqn
\T\left(X,Y,Z\right)=T\left(X,Y,Z\right)-\left(\left(Z\hook\rho\right)\hook F\right)\left(X,Y\right).
\eeqn
We compute
\beqn
\mfr{S}_{X,Y,Z}\T\left(X,Y,Z\right)=\mfr{S}_{X,Y,Z}T\left(X,Y,Z\right)
\eeqn
and
\beqn
\sum_i\T\left(X,e_i,e_i\right)=\sum_i\left(\left(e_i\hook\rho\right)\hook F\right)\left(e_i,X\right)=3\left(\rho\hook F\right)\left(X\right).
\eeqn
Therefore, the projection $\T_{160}$ of $\T$ onto $\mfr{T}_{160}$ is given by
\begin{align*}
\T_{160}\left(X,Y,Z\right)&=\T\left(X,Y,Z\right)-\frac{3}{7}\,g\left(\left(\rho\hook F\right)\left(X\right)Y-\left(\rho\hook F\right)\left(Y\right)X,Z\right)-T\left(X,Y,Z\right)\\
&=-\left(\left(Z\hook\rho\right)\hook F\right)\left(X,Y\right)-\frac{3}{7}\,g\left(\left(\rho\hook F\right)\left(X\right)Y-\left(\rho\hook F\right)\left(Y\right)X,Z\right).
\end{align*}
We then compute
\beqn
\sum_{i,j,k}\T_{160}\left(e_i,e_j,e_k\right)^2=\frac{4}{25}\left\|10\,F-\rho\wedge\left(\rho\hook F\right)\right\|^2+\frac{36}{35}\left\|\rho\hook F\right\|^2.
\eeqn
These equations together with lemma \ref{lem:4} prove
\begin{lem}\label{lem:7}
The torsion tensor $\T$ of $\nabla$ vanishes if and only if $T=0$ and $F=0$. Moreover, $\T\neq0$ is
\bite
\item[i)] totally skew-symmetric if and only if $F=0$.
\item[ii)] cyclic if and only if $T=0$.
\item[iii)] not vectorial.
\item[iv)] an element of $\mfr{T}_{160}$ if and only if $T=0$ and $F\in\ast\Lambda^4_{27}$.
\eite
\end{lem}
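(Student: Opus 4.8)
The plan is to identify each of the three irreducible components of $\T$ in the splitting $\mfr{T}=\mfr{T}_8\op\mfr{T}_{56}\op\mfr{T}_{160}$ using the three displayed computations that precede the statement, and then to read off every claim by inspection. First I would note that the cyclic-sum identity $\mfr{S}_{X,Y,Z}\T(X,Y,Z)=\mfr{S}_{X,Y,Z}T(X,Y,Z)=3\,T(X,Y,Z)$ identifies the totally skew-symmetric ($\mfr{T}_{56}$) component of $\T$ as exactly $T$, since the cyclic sum reproduces a $3$-form up to the factor $3$ and annihilates both $\mfr{T}_8$ and $\mfr{T}_{160}$. Next, the trace identity $\sum_i\T(X,e_i,e_i)=3\,(\rho\hook F)(X)$ pins down the vectorial ($\mfr{T}_8$) component, governed by the vector $\tfrac{3}{7}\,\rho\hook F$. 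The residual ($\mfr{T}_{160}$) component is then controlled by the norm identity
\beqn
\sum_{i,j,k}\T_{160}(e_i,e_j,e_k)^2=\frac{4}{25}\,\|10\,F-\rho\wedge(\rho\hook F)\|^2+\frac{36}{35}\,\|\rho\hook F\|^2.
\eeqn

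The key step is to simplify this last expression with lemma \ref{lem:4}. Writing $F=F_8+F_{27}$ with $F_8\in\ast\Lambda^4_8$ and $F_{27}\in\ast\Lambda^4_{27}$, lemma \ref{lem:4} gives $\rho\hook F=\rho\hook F_8$ and $\rho\wedge(\rho\hook F)=10\,F_8$, so that $10\,F-\rho\wedge(\rho\hook F)=10\,F_{27}$ and the norm collapses to $16\,\|F_{27}\|^2+\tfrac{36}{35}\,\|\rho\hook F_8\|^2$. Since $\rho\wedge(\rho\hook F_8)=10\,F_8$ forces $F_8\mapsto\rho\hook F_8$ to be injective on $\ast\Lambda^4_8$, I conclude that $\T_{160}$ vanishes if and only if $F_{27}=0$ and $F_8=0$, i.e.\ if and only if $F=0$.

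With the three components thus identified, each assertion follows immediately. The tensor $\T$ vanishes iff its $\mfr{T}_{56}$ and $\mfr{T}_{160}$ parts vanish, i.e.\ iff $T=0$ and $F=0$ (the vanishing of $\T_{160}$ also forcing the vectorial part $\rho\hook F$ to vanish). Assuming $\T\neq0$: for (i), $\T\in\mfr{T}_{56}$ forces $\T_{160}=0$ and hence $F=0$, while conversely $F=0$ leaves $\T=T\in\Lambda^3$; for (ii), $\T$ cyclic means its $\mfr{T}_{56}$ part $T$ vanishes; for (iii), $\T\in\mfr{T}_8$ would force both $T=0$ and (via $\T_{160}=0$) $F=0$, contradicting $\T\neq0$; for (iv), $\T\in\mfr{T}_{160}$ means $T=0$ together with the vanishing of the vectorial part $\rho\hook F=\rho\hook F_8$, equivalent by injectivity to $F_8=0$, i.e.\ $F\in\ast\Lambda^4_{27}$, and the converse is direct.

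I expect the only real obstacle to be the second paragraph: separating the $F_8$- and $F_{27}$-contributions inside the $\mfr{T}_{160}$-norm so that it manifestly vanishes exactly when $F=0$. Everything there rests on the two facts from lemma \ref{lem:4}---that $\ast\Lambda^4_{27}$ lies in the kernel of $\rho\hook\cdot$ whereas $\rho\wedge(\rho\hook\cdot)=10\,\Id$ on $\ast\Lambda^4_8$---which simultaneously give the clean split of the norm and the injectivity needed to pass from $\rho\hook F_8=0$ to $F_8=0$.
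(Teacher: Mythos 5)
Your proposal is correct and takes essentially the same route as the paper: the paper's proof consists precisely of the three displayed identities you invoke (cyclic sum, trace, and the $\mfr{T}_{160}$-norm formula) followed by an appeal to lemma \ref{lem:4}, and your second paragraph merely makes explicit the splitting $F=F_8+F_{27}$ and the injectivity of $F_8\mapsto\rho\hook F_8$ that the paper leaves implicit in the phrase ``these equations together with lemma \ref{lem:4} prove''.
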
\noindent
The connection forms
\beqn
\omega_{ij}:=g\left(\nabla e_i,e_j\right)
\eeqn
of the connection $\nabla$ define a $1$-form
\beqn
\Omega:=\left(\omega_{ij}\right)_{1\leq i,j\leq 8}.
\eeqn
Since
\beqn
\Omega\left(X\right)=\Omega^g\left(X\right)+\frac{1}{2}\left(X\hook T\right)+\left(\left(X\hook \rho\right)\hook F\right),
\eeqn
we see that $\Omega$ takes values in the Lie algebra $\so(8)$. We project onto $\m$:
\begin{align*}
\pr_{\m}\left(\Omega\left(X\right)\right)&=\Gamma\left(X\right)+\frac{1}{2}\,\pr_{\m}\left(X\hook T\right)+\pr_{\m}\left(\left(X\hook \rho\right)\hook F\right)\\
&=\Gamma_6\left(X\right)+\frac{1}{2}\,\pr_{\m}\left(X\hook\left(T-T^c\right)\right)
+\pr_{\m}\left(\left(X\hook \rho\right)\hook \left(F-F^c\right)\right).
\end{align*}
Applying lemmata \ref{lem:1} and \ref{lem:2} leads to 
\begin{thm}\label{thm:2}
Let $\left(M^8,g,\rho\right)$ be a $\PSU(3)$-structure with characteristic forms $T^c,F^c$. The metric connection $\nabla$ determined by
\beqn
g\left(\nabla_XY,Z\right)=g\left(\nabla^{g}_XY,Z\right)+\frac{1}{2}\, T\left(X,Y,Z\right)+\left(\left(X\hook\rho\right)\hook F\right)\left(Y,Z\right),
\eeqn
\begin{align*}
T&\in\Lambda^3_8\op\Lambda^3_{20}\op\Lambda^3_{27}, & F&\in\ast\Lambda^4_8\op\ast\Lambda^4_{27}
\end{align*}
preserves the $\PSU(3)$-structure if and only if $\left(M^8,g,\rho\right)$ is of type $\W_1\op\ldots\op\W_5$ and
\begin{align*}
T&=T^c, & F&=F^c.
\end{align*}
\end{thm}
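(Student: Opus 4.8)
The plan is to reformulate "$\nabla$ preserves the $\PSU(3)$-structure" as the vanishing of the $\m$-part of its connection form, and then to read off from the decomposition of $\R^8\ox\m$ exactly which data force this vanishing. First I would recall that $\PSU(3)$ is precisely the stabilizer of $\rho$ in $\SO(8)$, so its Lie algebra $\psu(3)=\Lambda^2_8$ annihilates $\rho$; the map $\m\ni\omega\mapsto\vrho_\ast(\omega)(\rho)=\sigma_1(\omega,\rho)$ therefore has kernel exactly $\psu(3)$ and is injective on $\m=\Lambda^2_{20}$. Since $\nabla\rho=\vrho_\ast(\Omega)(\rho)=\vrho_\ast(\pr_\m(\Omega))(\rho)$, this shows that $\nabla$ preserves the structure if and only if $\pr_\m(\Omega)=0$, reducing the theorem to analysing the expression for $\pr_\m(\Omega(X))$ computed immediately above the statement.

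Next I would rewrite the two correction terms in that expression through the maps $\Theta_1$ and $\Theta_2$. Reading their defining formulae as linear maps $\R^8\ra\m$ gives $\Theta_1(S)(X)=-\tfrac12\,\pr_\m(X\hook S)$ and $\Theta_2(G)(X)=-\pr_\m((X\hook\rho)\hook G)$, whence
\[
\pr_\m(\Omega(X))=\Gamma_6(X)+\Theta_1(T^c-T)(X)+\Theta_2(F^c-F)(X).
\]
The crucial observation is then that the three summands lie in mutually complementary pieces of the decomposition of theorem \ref{thm:1}: one has $\Gamma_6\in\W_6$, while $\Theta_1(T^c-T)\in\W_1\op\W_2\op\W_3$ (since $T^c-T\in\Lambda^3_8\op\Lambda^3_{20}\op\Lambda^3_{27}$) and $\Theta_2(F^c-F)\in\W_4\op\W_5$ (since $F^c-F\in\ast\Lambda^4_8\op\ast\Lambda^4_{27}$). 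Because $\R^8\ox\m=\W_1\op\cdots\op\W_6$ is a direct sum, $\pr_\m(\Omega)=0$ holds if and only if each of the three summands vanishes separately.

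Finally I would convert the three vanishing conditions into the stated ones. The condition $\Gamma_6=0$ is by definition that $(M^8,g,\rho)$ is of type $\W_1\op\cdots\op\W_5$. For the other two, lemmata \ref{lem:1} and \ref{lem:2} show that $\Phi_1\circ\Theta_1$ and $\Phi_2\circ\Theta_2$ act as nonzero scalar multiples of the identity on each irreducible component, so $\Theta_1$ is injective on $\Lambda^3_8\op\Lambda^3_{20}\op\Lambda^3_{27}$ and $\Theta_2$ is injective on $\ast\Lambda^4_8\op\ast\Lambda^4_{27}$; thus $\Theta_1(T^c-T)=0$ gives $T=T^c$ and $\Theta_2(F^c-F)=0$ gives $F=F^c$. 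Conversely, if the structure is of type $\W_1\op\cdots\op\W_5$ with $T=T^c$ and $F=F^c$, all three summands vanish, so $\pr_\m(\Omega)=0$ and $\nabla$ preserves the structure.

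I expect the only genuinely delicate point to be the injectivity step, namely ensuring that lemmata \ref{lem:1} and \ref{lem:2} really pin down $T^c-T$ and $F^c-F$ themselves rather than merely their images under $\Phi_1,\Phi_2$; once the scalar-multiple-of-identity relations are invoked this is immediate, and everything else is a direct reading of the already-established module decomposition.
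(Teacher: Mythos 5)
Your proposal is correct and follows essentially the same route as the paper: it reduces structure preservation to the vanishing of $\pr_\m(\Omega)$, rewrites this as $\Gamma_6+\Theta_1\left(T^c-T\right)+\Theta_2\left(F^c-F\right)$ exactly as in the computation preceding the theorem, and then invokes the direct sum decomposition of theorem \ref{thm:1} together with the scalar identities of lemmata \ref{lem:1} and \ref{lem:2} to force each summand to vanish. The only difference is that you spell out explicitly the injectivity argument and the reduction ``$\nabla\rho=0\Leftrightarrow\pr_\m(\Omega)=0$'', which the paper leaves implicit.
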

From now on we suppose that $\left(M^8,g,\rho\right)$ is of type
\beqn
\W_1\op\W_2\op\W_3\op\W_4\op\W_5
\eeqn
and define the metric connection $\nabla^c$ on $\left(M^8,g,\rho\right)$ via
\beqn
g\left(\nabla^c_XY,Z\right)=g\left(\nabla^{g}_XY,Z\right)+\frac{1}{2}\, T^c\left(X,Y,Z\right)+\left(\left(X\hook\rho\right)\hook F^c\right)\left(Y,Z\right).
\eeqn
This connection preserves the underlying $\PSU(3)$-structure, i.e.\ $\nabla^c\rho=0$. Moreover, it is completely determined by $\nabla^g$ and the characteristic forms $T^c,F^c$. Therefore, we will call $\nabla^c$ the \emph{characteristic connection} of $\left(M^8,g,\rho\right)$. The respective torsion tensor $\T^c$ will be called \emph{characteristic torsion} of $\left(M^8,g,\rho\right)$.
\begin{cor}\label{cor:2}
Let $\left(M^8,g,\rho\right)$ be a $\PSU(3)$-structure of type $\W_1\op\ldots\op\W_5$. The characteristic torsion $\T^c$ of $\left(M^8,g,\rho\right)$ vanishes if and only if $\left(M^8,g,\rho\right)$ is integrable. Moreover, $\T^c\neq0$ is
\bite
\item[i)] totally skew-symmetric if and only if $\left(M^8,g,\rho\right)$ is of type $\W_1\op\W_2\op\W_3$.
\item[ii)] cyclic if and only if $\left(M^8,g,\rho\right)$ is of type $\W_4\op\W_5$.
\item[iii)] not vectorial.
\item[iv)] an element of $\mfr{T}_{160}$ if and only if $\left(M^8,g,\rho\right)$ is of type $\W_5$.
\eite
\end{cor}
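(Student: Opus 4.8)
The plan is to read off Corollary \ref{cor:2} as the specialization of Lemma \ref{lem:7} to the characteristic connection, once the dictionary of \autoref{sec:3} between the characteristic forms and the intrinsic torsion type is invoked. Indeed $\nabla^c$ is, by definition, precisely the connection $\nabla$ of Lemma \ref{lem:7} for the choice $T=T^c$, $F=F^c$, so every structural statement about $\T$ there transfers verbatim to $\T^c$ after this substitution. It then only remains to translate the conditions ``$T^c=0$'', ``$F^c=0$'' and ``$F^c\in\ast\Lambda^4_{27}$'' into statements about the type of $\left(M^8,g,\rho\right)$.

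First I would recall from \autoref{sec:3} that $T^c=T^c_8+T^c_{20}+T^c_{27}$ is the preimage under $\Theta_1$ of $\Gamma_1+\Gamma_2+\Gamma_3\in\W_1\op\W_2\op\W_3$, and $F^c=F^c_8+F^c_{27}$ the preimage under $\Theta_2$ of $\Gamma_4+\Gamma_5\in\W_4\op\W_5$. By lemmata \ref{lem:1} and \ref{lem:2} the composites $\Phi_1\circ\Theta_1$ and $\Phi_2\circ\Theta_2$ are nonzero scalar multiples of the identity on $\Lambda^3_8,\Lambda^3_{20},\Lambda^3_{27}$ and on $\ast\Lambda^4_8,\ast\Lambda^4_{27}$ respectively, so $\Theta_1$ and $\Theta_2$ restrict to isomorphisms onto $\W_1\op\W_2\op\W_3$ and $\W_4\op\W_5$, with $T^c_8\leftrightarrow\Gamma_1$, $T^c_{20}\leftrightarrow\Gamma_2$, $T^c_{27}\leftrightarrow\Gamma_3$, $F^c_8\leftrightarrow\Gamma_4$ and $F^c_{27}\leftrightarrow\Gamma_5$. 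Hence $T^c=0$ is equivalent to $\Gamma_1=\Gamma_2=\Gamma_3=0$, i.e.\ to type $\W_4\op\W_5$; $F^c=0$ is equivalent to $\Gamma_4=\Gamma_5=0$, i.e.\ to type $\W_1\op\W_2\op\W_3$; and $F^c\in\ast\Lambda^4_{27}$, meaning $F^c_8=0$, is equivalent to $\Gamma_4=0$.

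With these equivalences the four assertions follow directly from Lemma \ref{lem:7}. The vanishing claim holds because $\T^c=0$ is equivalent to $T^c=F^c=0$, hence to $\Gamma=0$, i.e.\ integrability. Item (i) combines Lemma \ref{lem:7} (i) with $F^c=0\Leftrightarrow$ type $\W_1\op\W_2\op\W_3$; item (ii) combines Lemma \ref{lem:7} (ii) with $T^c=0\Leftrightarrow$ type $\W_4\op\W_5$; item (iii) is Lemma \ref{lem:7} (iii) unchanged; and item (iv) combines Lemma \ref{lem:7} (iv) with the fact that ``$T^c=0$ and $F^c\in\ast\Lambda^4_{27}$'' amounts to $\Gamma_1=\Gamma_2=\Gamma_3=\Gamma_4=0$, that is type $\W_5$. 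I anticipate no real obstacle: the argument is pure bookkeeping through $\Theta_1$ and $\Theta_2$, and the only point deserving a moment's attention is the component-wise bijectivity of these maps on the relevant summands, which lemmata \ref{lem:1} and \ref{lem:2} guarantee.
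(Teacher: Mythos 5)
Your proposal is correct and is exactly the paper's (implicit) argument: the corollary is stated as a direct consequence of Lemma \ref{lem:7} with $T=T^c$, $F=F^c$, translated into type conditions via the identifications $\Gamma_1+\Gamma_2+\Gamma_3\leftrightarrow T^c$, $\Gamma_4+\Gamma_5\leftrightarrow F^c$ from \autoref{sec:3}, whose component-wise bijectivity is ensured by lemmata \ref{lem:1} and \ref{lem:2}.
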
\noindent
The tensor field $\A^c$ defined by
\begin{align*}
\A^c\left(X,Y,Z\right)&=g\left(\nabla^c_XY,Z\right)-g\left(\nabla^g_XY,Z\right)\\
&=\frac{1}{2}\, T^c\left(X,Y,Z\right)+\left(\left(X\hook\rho\right)\hook F^c\right)\left(Y,Z\right)
\end{align*}
is an element of the space
\beqn
\mfr{A}:=\left\{\A\in\Lambda^1\ox\Lambda^1\ox\Lambda^1\setsep\A\left(X,Y,Z\right)+\A\left(X,Z,Y\right)=0\right\}.
\eeqn
There exists an $\Orth(8)$-equivariant bijection between $\mfr{A}$ and $\mfr{T}$ (see \cite{Car25}) explicitly given by
\begin{align*}
\mca{T}^c\left(X,Y,Z\right)&=\A^c\left(X,Y,Z\right)-\A^c\left(Y,X,Z\right),\\
2\,\A^c\left(X,Y,Z\right)&=\mca{T}^c\left(X,Y,Z\right)-\mca{T}^c\left(Y,Z,X\right)+\mca{T}^c\left(Z,X,Y\right).
\end{align*}
Since $\nabla^c$ preserves the $\PSU(3)$-structure, applying lemmata \ref{lem:6}, \ref{lem:2} and \ref{lem:7} we have 
\begin{prop}\label{prop:1}
The following are equivalent on $\PSU(3)$-structures $\left(M^8,g,\rho\right)$ of type $\W_1\op\ldots\op\W_5$:
\bite
\item[i)] The characteristic torsion $\T^c$ is $\nabla^c$-parallel.
\item[ii)] The tensor field $\A^c$ is $\nabla^c$-parallel.
\item[iii)] The characteristic forms $T^c,F^c$ are $\nabla^c$-parallel.
\eite
\end{prop}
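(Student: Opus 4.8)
The plan is to prove the chain (i) $\Leftrightarrow$ (ii) $\Leftrightarrow$ (iii), exploiting throughout that $\nabla^c$ is a metric connection ($\nabla^c g=0$) preserving the structure ($\nabla^c\rho=0$). For (i) $\Leftrightarrow$ (ii) I would observe that the $\Orth(8)$-equivariant bijection between $\mfr{A}$ and $\mfr{T}$ recalled just before the proposition is built solely from permutations of the three tensor slots. Covariant differentiation commutes with such permutations, and $\nabla^c g=0$, so this bijection intertwines $\nabla^c$ acting on $\mfr{A}$ with $\nabla^c$ acting on $\mfr{T}$. Consequently $\nabla^c\T^c=0$ holds if and only if $\nabla^c\A^c=0$.

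For (ii) $\Leftrightarrow$ (iii) I would split $\A^c=\tfrac12 T^c+\A^c_F$ with $\A^c_F(X,Y,Z):=((X\hook\rho)\hook F^c)(Y,Z)$. By lemma \ref{lem:7}, under the bijection above the summand $\tfrac12 T^c$ corresponds to the totally skew-symmetric torsion $\mfr{T}_{56}$ and $\A^c_F$ to the cyclic part $\mfr{T}_8\op\mfr{T}_{160}$; these are complementary $\Orth(8)$-submodules. Since the orthogonal projections onto $\Orth(8)$-submodules are themselves assembled from $g$ and slot permutations, the metric connection $\nabla^c$ commutes with them and hence preserves this splitting. Thus $\nabla^c\A^c=0$ is equivalent to $\nabla^cT^c=0$ together with $\nabla^c\A^c_F=0$, the first condition being immediate from $\A^c_T=\tfrac12 T^c$.

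To handle the remaining condition, I would differentiate using $\nabla^c\rho=0$, obtaining $(\nabla^c_W\A^c_F)(X,Y,Z)=((X\hook\rho)\hook\nabla^c_WF^c)(Y,Z)$, so that $\nabla^c\A^c_F=0$ means $(X\hook\rho)\hook\nabla^c_WF^c=0$ for all $X,W$. Since $\nabla^c$ preserves the structure, $\nabla^c_WF^c$ again lies in $\ast\Lambda^4_8\op\ast\Lambda^4_{27}$; by lemma \ref{lem:6} each contraction $(X\hook\rho)\hook\nabla^c_WF^c$ therefore lies in $\m$, and collectively these contractions are the components of $\Theta_2(\nabla^c_WF^c)$. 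Lemma \ref{lem:2} shows that $\Phi_2\circ\Theta_2$ acts as a nonzero scalar on each of $\ast\Lambda^4_8$ and $\ast\Lambda^4_{27}$, so $\Theta_2$ is injective there; hence the vanishing of all these contractions forces $\nabla^c_WF^c=0$, and the converse is clear. This yields $\nabla^c\A^c_F=0\Leftrightarrow\nabla^cF^c=0$ and completes (ii) $\Leftrightarrow$ (iii).

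The main obstacle will be the reduction carried out in the second paragraph: that parallelism of the full tensor $\A^c$ is equivalent to the simultaneous parallelism of its skew-symmetric and cyclic constituents. This rests on the principle that a metric connection preserves the $\Orth(8)$-isotypic decomposition of torsion-type tensors, so that the projections onto the summands commute with $\nabla^c$. Granted this, the rest is the bookkeeping supplied by lemmata \ref{lem:6}, \ref{lem:2} and \ref{lem:7}, together with the injectivity of $\Theta_2$ needed to recover $F^c$ from its $\rho$-contraction.
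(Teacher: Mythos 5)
Your proof is correct and takes essentially the same route as the paper, which disposes of the proposition in a single line by invoking structure preservation ($\nabla^c g=0$, $\nabla^c\rho=0$) together with lemmata \ref{lem:6}, \ref{lem:2} and \ref{lem:7} --- exactly the ingredients you use. Your write-up merely makes explicit what the paper leaves implicit: the splitting of $\A^c$ into skew and cyclic constituents preserved by $\nabla^c$, and the injectivity of $\Theta_2$ needed to recover $\nabla^c F^c=0$ from the vanishing of the contractions.
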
\noindent
If one of these cases holds, we will call $\left(M^8,g,\rho\right)$ a $\PSU(3)$-structure \emph{with parallel characteristic torsion}.

We now intend to compute necessary conditions for the characteristic forms in the presence of parallel characteristic torsion. We investigate the differential $d^c\alpha$ and co-differential $\delta^c\alpha$ of an arbitrary $k$-form $\alpha$ on $\left(M^8,g,\rho\right)$ with respect to $\nabla^c$,
\begin{align*}
d^c\alpha &:= \sum_ie_i\wedge\nabla^c_{e_i}\alpha, &
\delta^c\alpha &:= -\sum_ie_i\hook\nabla^c_{e_i}\alpha.
\end{align*}
We begin with
\begin{align*}
d^c\alpha\left(X_0,\sdots,X_k\right)&=\sum_i\left(e_i\wedge\nabla^c_{e_i}\alpha\right)\left(X_0,\sdots,X_k\right)\\
&=\sum_i\sum_j\left(-1\right)^je_i\left(X_j\right)\cdot\nabla^c_{e_i}\alpha\left(X_0,\sdots,\hat{X_j},\sdots,X_k\right)\\
&=\sum_j\left(-1\right)^j\nabla^c_{X_j}\alpha\left(X_0,\sdots,\hat{X_j},\sdots,X_k\right).
\end{align*}
The formula
\beqn
\nabla_X\alpha\left(X_1,\sdots,X_k\right)=X\left(\alpha\left(X_1,\sdots,X_k\right)\right)-\alpha\left(\nabla_XX_1,X_2,\sdots,X_k\right)-\ldots-\alpha\left(X_1,\sdots,X_{k-1},\nabla_XX_k\right),
\eeqn
valid for any covariant derivative $\nabla$, leads to
\begin{align*}
d^c\alpha\left(X_0,\sdots,X_k\right)&=\sum_j\left(-1\right)^jX_j\left(\alpha\left(X_0,\sdots,\hat{X_j},\sdots,X_k\right)\right)-\sum_{j\neq0}\left(-1\right)^j\alpha\left(\nabla^c_{X_j}X_0,X_1,\sdots,\hat{X_j},\sdots,X_k\right)\\
&\phantom{=\,}-\ldots-\sum_{j\neq k}\left(-1\right)^j\alpha\left(X_0,\sdots,\hat{X_j},\sdots,X_{k-1},\nabla^c_{X_j}X_k\right)\\
&=\sum_j\left(-1\right)^j\nabla^g_{X_j}\alpha\left(X_0,\sdots,\hat{X_j},\sdots,X_k\right)\\
&\phantom{=\,}-\sum_{j\neq0}\sum_l\left(-1\right)^j\A^c\left(X_j,X_0,e_l\right)\cdot\alpha\left(e_l,X_1,\sdots,\hat{X_j},\sdots,X_k\right)\\
&\phantom{=\,}-\ldots-\sum_{j\neq k}\sum_l\left(-1\right)^j\A^c\left(X_j,X_k,e_l\right)\cdot\alpha\left(X_0,\sdots,\hat{X_j},\sdots,X_{k-1},e_l\right).
\end{align*}
Therefore,
\beqn
\left(d^c\alpha-d\alpha\right)\left(X_0,\sdots,X_k\right)=\sum_l\sum_{i<j}\left(-1\right)^{i+j}\T^c\left(X_i,X_j,e_l\right)\cdot\alpha\left(e_l,X_0,\sdots,\hat{X_i},\sdots,\hat{X_j},\sdots, X_k\right).
\eeqn
Finally, we express the right-hand side in terms of the characteristic forms:
\begin{align*}
\left(d^c\alpha-d\alpha\right)\left(X_0,\sdots,X_k\right)&=\sum_l\sum_{i<j}\left(-1\right)^{i+j}\Big(\left(e_l\hook T^c\right)\left(X_i,X_j\right)\\
&\phantom{\sum_l\sum_{i<j}}-\left(\left(e_l\hook\rho\right)\hook F^c\right)\left(X_i,X_j\right)\Big)
\cdot\left(e_l\hook\alpha\right)\left(X_0,\sdots,\hat{X_i},\sdots,\hat{X_j},\sdots, X_k\right)\\
&=-\sum_l\left(e_l\hook T^c\right)\wedge\left(e_l\hook \alpha\right)\left(X_0,\sdots,X_k\right)\\
&\phantom{=\,}+\sum_l\left(\left(e_l\hook\rho\right)\hook F^c\right)\wedge\left(e_l\hook \alpha\right)\left(X_0,\sdots,X_k\right).
\end{align*}
\begin{prop}\label{prop:2}
Let $\left(M^8,g,\rho\right)$ be a $\PSU(3)$-structure of type $\W_1\op\ldots\op\W_5$ with characteristic forms $T^c$, $F^c$, and consider a differential form $\alpha$ on $\left(M^8,g,\rho\right)$. Then
\beqn
d^c\alpha=d\alpha-\sum_i\left(e_i\hook T^c\right)\wedge\left(e_i\hook \alpha\right)+\sum_i\left(\left(e_i\hook\rho\right)\hook F^c\right)\wedge\left(e_i\hook \alpha\right).
\eeqn
\end{prop}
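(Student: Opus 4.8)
The plan is to compute $d^c\alpha$ straight from the definition $d^c\alpha=\sum_i e_i\wedge\nabla^c_{e_i}\alpha$ and to compare it term by term with $d\alpha=\sum_i e_i\wedge\nabla^g_{e_i}\alpha$, isolating the contribution of the difference tensor $\A^c=\nabla^c-\nabla^g$. First I would rewrite the alternating sum $e_i\wedge\nabla^c_{e_i}\alpha$ evaluated on $(X_0,\sdots,X_k)$ in the intrinsic form $\sum_j(-1)^j\nabla^c_{X_j}\alpha(X_0,\sdots,\hat{X_j},\sdots,X_k)$, and do the same for $d\alpha$ with $\nabla^g$. Expanding both via the Leibniz rule $\nabla_X\alpha(X_1,\sdots,X_k)=X(\alpha(X_1,\sdots,X_k))-\sum_m\alpha(\sdots,\nabla_X X_m,\sdots)$, the differentiation-of-coefficients terms $X_j(\alpha(\sdots))$ are identical for the two connections and cancel in the difference, so that $d^c\alpha-d\alpha$ depends only on $\nabla^c_{X_j}X_m-\nabla^g_{X_j}X_m$, i.e.\ on $\A^c(X_j,X_m,\cdot)$.

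The key bookkeeping step is that the terms in which $\A^c$ receives the two arguments $X_j,X_m$ symmetrically cancel in the alternating sum, leaving only the antisymmetric combination. For a fixed pair $\{i,j\}$ with $i<j$ the two contributions coming from the ordered pairs $(i,j)$ and $(j,i)$ combine into $\A^c(X_i,X_j,e_l)-\A^c(X_j,X_i,e_l)=\T^c(X_i,X_j,e_l)$, using the relation between $\A^c$ and $\T^c$ recorded in \autoref{sec:5}. Carrying out this reindexing carefully yields the intermediate identity
\[
(d^c\alpha-d\alpha)(X_0,\sdots,X_k)=\sum_l\sum_{i<j}(-1)^{i+j}\,\T^c(X_i,X_j,e_l)\cdot(e_l\hook\alpha)(X_0,\sdots,\hat{X_i},\sdots,\hat{X_j},\sdots,X_k).
\]

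Finally I would substitute the explicit form of the characteristic torsion, $\T^c(X,Y,Z)=T^c(X,Y,Z)-((Z\hook\rho)\hook F^c)(X,Y)$ (established alongside lemma \ref{lem:7}), so that $\T^c(X_i,X_j,e_l)=(e_l\hook T^c)(X_i,X_j)-((e_l\hook\rho)\hook F^c)(X_i,X_j)$, and then recognize each of the two resulting double sums through the combinatorial identity
\[
\sum_{i<j}(-1)^{i+j}\,\gamma(X_i,X_j)\cdot(e_l\hook\alpha)(X_0,\sdots,\hat{X_i},\sdots,\hat{X_j},\sdots,X_k)=-\big(\gamma\wedge(e_l\hook\alpha)\big)(X_0,\sdots,X_k),
\]
valid for any $2$-form $\gamma$, applied to $\gamma=e_l\hook T^c$ and to $\gamma=(e_l\hook\rho)\hook F^c$; summing over $l$ then gives the asserted formula. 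The places that demand care are precisely the sign in this last identity — the overall factor $-1$ is what flips the two terms relative to their appearance in $\T^c$, producing the stated $-$ and $+$ — and the verification that the symmetric part of $\A^c$ really disappears. Note that neither step uses the type hypothesis $\W_1\op\ldots\op\W_5$; that assumption enters only to ensure $\nabla^c$ is defined, so the formula holds for every differential form $\alpha$.
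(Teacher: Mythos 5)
Your proposal is correct and takes essentially the same route as the paper's own derivation: the intrinsic expansion of $d^c\alpha$ via the Leibniz rule, the cancellation of the symmetric part of $\A^c$ leading to the intermediate identity $\left(d^c\alpha-d\alpha\right)\left(X_0,\sdots,X_k\right)=\sum_l\sum_{i<j}\left(-1\right)^{i+j}\T^c\left(X_i,X_j,e_l\right)\cdot\left(e_l\hook\alpha\right)\left(X_0,\sdots,\hat{X_i},\sdots,\hat{X_j},\sdots,X_k\right)$, and then the substitution $\T^c\left(X,Y,Z\right)=T^c\left(X,Y,Z\right)-\left(\left(Z\hook\rho\right)\hook F^c\right)\left(X,Y\right)$ together with the wedge-product identity (including its crucial overall minus sign) are exactly the steps carried out in the text preceding the proposition. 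Your closing remark about the role of the type hypothesis is also accurate: it is needed only so that the characteristic connection $\nabla^c$ exists.
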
\noindent
We continue with
\begin{align*}
\delta^c\alpha\left(X_1,\sdots,X_{k-1}\right)&=-\sum_i\nabla^c_{e_i}\alpha\left(e_i,X_1,\sdots,X_{k-1}\right)\\
&=-\sum_ie_i\left(\alpha\left(e_i,X_1,\sdots,X_{k-1}\right)\right)+\sum_i\alpha\left(\nabla^c_{e_i}e_i,X_1,\sdots,X_{k-1}\right)\\
&\phantom{=\,}+\sum_i\alpha\left(e_i,\nabla^c_{e_i}X_1,\sdots,X_{k-1}\right)+\ldots+\sum_i\alpha\left(e_i,X_1,\sdots,\nabla^c_{e_i}X_{k-1}\right)\\
&=-\sum_i\nabla^g_{e_i}\alpha\left(e_i,X_1,\sdots,X_{k-1}\right)+\sum_{i,j}\A^c\left(e_i,e_i,e_j\right)\cdot\alpha\left(e_j,X_1,\sdots,X_{k-1}\right)\\
&\phantom{=\,}+\sum_{i,j}\A^c\left(e_i,X_1,e_j\right)\cdot\alpha\left(e_i,e_j,X_2,\sdots,X_{k-1}\right)\\
&\phantom{=\,}+\ldots+\sum_{i,j}\A^c\left(e_i,X_{k-1},e_j\right)\cdot\alpha\left(e_i,X_1,\sdots,X_{k-2},e_j\right).
\end{align*}
Since
\beqn
\A^c\left(X,X,Y\right)=\mca{T}^c\left(Y,X,X\right),
\eeqn
we have
\begin{align*}
\left(\delta^c\alpha-\delta\alpha\right)\left(X_1,\sdots,X_{k-1}\right)&=\sum_{i,j}\T^c\left(e_j,e_i,e_i\right)\cdot\alpha\left(e_j,X_1,\sdots,X_{k-1}\right)\\
&\phantom{=\,}+\sum_{i<j}\sum_l\left(-1\right)^l\T^c\left(e_i,e_j,X_l\right)\cdot\alpha\left(e_i,e_j,X_1,\sdots,\hat{X_l},\sdots,X_{k-1}\right).
\end{align*}
Again, we express the difference in terms of $T^c$ and $F^c$:
\begin{align*}
\left(\delta^c\alpha-\delta\alpha\right)\left(X_1,\sdots,X_{k-1}\right)&=\sum_j\sum_i\left(\left(e_i\hook\rho\right)\hook F^c\right)\left(e_i,e_j\right)\cdot\alpha\left(e_j,X_1,\sdots,X_{k-1}\right)\\
&\phantom{=\,}+\sum_{i<j}\sum_l\left(-1\right)^l\Big(\left(e_j\hook e_i\hook T^c\right)\left(X_l\right)+\left(e_j\hook\left(\left(e_i\hook\rho\right)\hook F^c\right)\right)\left(X_l\right)\\
&\phantom{=+}-\left(e_i\hook\left(\left(e_j\hook\rho\right)\hook F^c\right)\right)\left(X_l\right)\Big)\cdot\left(e_j\hook e_i\hook\alpha\right)\left(X_1,\sdots,\hat{X_l},\sdots,X_{k-1}\right)\\
&=3\left(\left(\rho\hook F^c\right)\hook\alpha\right)\left(X_1,\sdots,X_{k-1}\right)\\
&\phantom{=\,}-\frac{1}{2}\sum_{i,j}\left(e_i\hook e_j\hook T^c\right)\wedge\left(e_i\hook e_j\hook\alpha\right)\left(X_1,\sdots,X_{k-1}\right)\\
&\phantom{=\,}-\sum_{i,j}\left(e_i\hook \left(\left(e_j\hook \rho\right)\hook F^c\right)\right)\wedge\left(e_i\hook e_j\hook\alpha\right)\left(X_1,\sdots,X_{k-1}\right).
\end{align*}
\begin{prop}\label{prop:3}
Let $\left(M^8,g,\rho\right)$ be a $\PSU(3)$-structure of type $\W_1\op\ldots\op\W_5$ with characteristic forms $T^c$, $F^c$, and consider a differential form $\alpha$ on $\left(M^8,g,\rho\right)$. Then
\begin{align*}
\delta^c\alpha&=\delta\alpha+3\left(\left(\rho\hook F^c\right)\hook\alpha\right)-\frac{1}{2}\sum_{i,j}\left(e_i\hook e_j\hook T^c\right)\wedge\left(e_i\hook e_j\hook\alpha\right)\\
&\phantom{=\,}-\sum_{i,j}\left(e_i\hook \left(\left(e_j\hook \rho\right)\hook F^c\right)\right)\wedge\left(e_i\hook e_j\hook\alpha\right).
\end{align*}
\end{prop}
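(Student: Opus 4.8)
The plan is to compute $\delta^c\alpha$ directly from its definition $\delta^c\alpha=-\sum_i e_i\hook\nabla^c_{e_i}\alpha$, in exact parallel with the derivation of $d^c\alpha$ that precedes Proposition~\ref{prop:2}. First I would write $\nabla^c_XY=\nabla^g_XY+\sum_j\A^c(X,Y,e_j)\,e_j$, which merely restates the definition of $\A^c$, and insert it into the standard Leibniz formula for the action of a covariant derivative on a $k$-form. The Levi--Civita contributions reassemble into the Riemannian co-differential $\delta\alpha$, so that the difference $\delta^c\alpha-\delta\alpha$ becomes a sum of contractions of $\A^c$ against $\alpha$: one \emph{trace-type} term, arising from the summed slot itself (the $e_i$ in $\nabla^c_{e_i}\alpha(e_i,\sdots)$), and $k-1$ \emph{double-contraction} terms, one for each remaining argument of $\alpha$.

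Next I would treat the two kinds of terms separately. For the trace term, the identity $\A^c(X,X,Y)=\T^c(Y,X,X)$ (which follows from the $\mfr{A}\leftrightarrow\mfr{T}$ bijection by equating the first two arguments) turns $\sum_i\A^c(e_i,e_i,e_j)$ into $\sum_i\T^c(e_j,e_i,e_i)$, and the earlier identity $\sum_i\T^c(X,e_i,e_i)=3\,(\rho\hook F^c)(X)$ converts this into the contribution $3\,((\rho\hook F^c)\hook\alpha)$. For the double-contraction terms, the key observation is that, since $\alpha$ is antisymmetric in the two slots being contracted, only the part of $\A^c$ antisymmetric in its first and third arguments survives; via $2\A^c(X,Y,Z)=\T^c(X,Y,Z)-\T^c(Y,Z,X)+\T^c(Z,X,Y)$ this antisymmetric part is expressible through $\T^c$, so each double contraction of $\A^c$ against $\alpha$ reduces to a double contraction of $\T^c$ against $\alpha$.

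Finally I would substitute the explicit decomposition $\T^c(X,Y,Z)=T^c(X,Y,Z)-((Z\hook\rho)\hook F^c)(X,Y)$ and re-package the resulting index sums as the interior-product/wedge expressions $-\tfrac12\sum_{i,j}(e_i\hook e_j\hook T^c)\wedge(e_i\hook e_j\hook\alpha)$ and $-\sum_{i,j}(e_i\hook((e_j\hook\rho)\hook F^c))\wedge(e_i\hook e_j\hook\alpha)$, using the standard identity that rewrites a contracted antisymmetric sum as a wedge of interior products. Collecting the three contributions yields the stated formula. The genuinely delicate step is the bookkeeping in the second paragraph: one must carry the sign $(-1)^l$ attached to each of the $k-1$ argument slots, antisymmetrize $\A^c$ correctly over the contracted indices, and check that substituting the $F^c$-part of $\T^c$ into the double-contraction produces only the term $-\sum_{i,j}(e_i\hook((e_j\hook\rho)\hook F^c))\wedge(e_i\hook e_j\hook\alpha)$ without a spurious extra trace beyond the $3\,((\rho\hook F^c)\hook\alpha)$ already isolated. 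Everything else is routine reindexing.
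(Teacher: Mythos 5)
Your proposal is correct and follows the paper's own proof essentially verbatim: the same Leibniz expansion of $\delta^c\alpha=-\sum_i e_i\hook\nabla^c_{e_i}\alpha$, the same trace identity $\A^c\left(X,X,Y\right)=\mca{T}^c\left(Y,X,X\right)$ combined with $\sum_i\T^c\left(X,e_i,e_i\right)=3\left(\rho\hook F^c\right)\left(X\right)$, the same antisymmetrization converting the double contractions of $\A^c$ into double contractions of $\T^c$, and the same final substitution $\T^c\left(X,Y,Z\right)=T^c\left(X,Y,Z\right)-\left(\left(Z\hook\rho\right)\hook F^c\right)\left(X,Y\right)$ repackaged into wedges of interior products. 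The bookkeeping point you single out as delicate is exactly where the paper invokes the identity $\mfr{S}_{X,Y,Z}\left(\left(X\hook\rho\right)\hook F^c\right)\left(Y,Z\right)=0$ established at the beginning of \autoref{sec:5}, which rewrites $\left(\left(X_l\hook\rho\right)\hook F^c\right)\left(e_i,e_j\right)$ as $\left(e_i\hook\left(\left(e_j\hook\rho\right)\hook F^c\right)\right)\left(X_l\right)-\left(e_j\hook\left(\left(e_i\hook\rho\right)\hook F^c\right)\right)\left(X_l\right)$ and thereby yields precisely the stated $F^c$-term with no spurious extra trace.
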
\noindent
The above considerations enable us to conclude the following necessary conditions for $T^c$ and $F^c$ in presence of parallel characteristic torsion: 
\begin{cor}\label{cor:3}
Let $\left(M^8,g,\rho\right)$ be a $\PSU(3)$-structure of type $\W_1\op\ldots\op\W_5$ with parallel characteristic torsion, i.e.\ $\nabla^cT^c=0$ and $\nabla^cF^c=0$. Then
\begin{align*}
dT^c&=\sum_i\left(e_i\hook T^c\right)\wedge\left(e_i\hook T^c\right)-\sum_i\left(\left(e_i\hook\rho\right)\hook F^c\right)\wedge\left(e_i\hook T^c\right),\\
\delta T^c&=-3\left(\left(\rho\hook F^c\right)\hook T^c\right)+\sum_{i,j}\left(e_i\hook \left(\left(e_j\hook \rho\right)\hook F^c\right)\right)\wedge\left(e_i\hook e_j\hook T^c\right),\\
dF^c&=\sum_i\left(e_i\hook T^c\right)\wedge\left(e_i\hook F^c\right)-\sum_i\left(\left(e_i\hook\rho\right)\hook F^c\right)\wedge\left(e_i\hook F^c\right),\\
\delta F^c&=-3\left(\left(\rho\hook F^c\right)\hook F^c\right)+\frac{1}{2}\sum_{i,j}\left(e_i\hook e_j\hook T^c\right)\wedge\left(e_i\hook e_j\hook F^c\right)\\
&\phantom{=\,}+\sum_{i,j}\left(e_i\hook \left(\left(e_j\hook \rho\right)\hook F^c\right)\right)\wedge\left(e_i\hook e_j\hook F^c\right).
\end{align*}
\end{cor}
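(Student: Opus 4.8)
The plan is to exploit that, by hypothesis, the characteristic forms $T^c$ and $F^c$ are $\nabla^c$-parallel, which forces them to be simultaneously $d^c$- and $\delta^c$-closed. Indeed, directly from the definitions $d^c\alpha=\sum_ie_i\wedge\nabla^c_{e_i}\alpha$ and $\delta^c\alpha=-\sum_ie_i\hook\nabla^c_{e_i}\alpha$, the vanishing of $\nabla^cT^c$ and $\nabla^cF^c$ yields at once
\beqn
d^cT^c=\delta^cT^c=0,\qquad d^cF^c=\delta^cF^c=0.
\eeqn
The four identities of the corollary are then simply the reformulations of these equations in terms of the ordinary operators $d$ and $\delta$.

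Concretely, I would substitute $\alpha=T^c$ and $\alpha=F^c$ into proposition \ref{prop:2}, set the left-hand sides $d^cT^c$ and $d^cF^c$ equal to zero, and solve for $dT^c$ and $dF^c$; this produces the first and third identities verbatim, with no cancellation required. For the co-differentials I would proceed identically using proposition \ref{prop:3}: substituting $\alpha=T^c$ and $\alpha=F^c$, imposing $\delta^cT^c=\delta^cF^c=0$, and solving for $\delta T^c$ and $\delta F^c$ gives the second and fourth identities.

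The single point deserving attention -- and the only place where bare substitution does not reproduce the stated formula verbatim -- is the term $\tfrac{1}{2}\sum_{i,j}\left(e_i\hook e_j\hook T^c\right)\wedge\left(e_i\hook e_j\hook T^c\right)$ coming from proposition \ref{prop:3} in the case $\alpha=T^c$. For each fixed pair $(i,j)$ the factor $e_i\hook e_j\hook T^c$ is a $1$-form, so its wedge with itself vanishes and the whole double sum drops out; this explains the absence of any quadratic-in-$T^c$ contribution in the formula for $\delta T^c$. By contrast, in the case $\alpha=F^c$ the analogous term $\tfrac{1}{2}\sum_{i,j}\left(e_i\hook e_j\hook T^c\right)\wedge\left(e_i\hook e_j\hook F^c\right)$ pairs the $1$-form $e_i\hook e_j\hook T^c$ with the $2$-form $e_i\hook e_j\hook F^c$ and therefore survives, as it must. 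I anticipate no genuine obstacle beyond careful sign-bookkeeping when transposing terms across the equalities; given propositions \ref{prop:2} and \ref{prop:3}, the corollary is an immediate consequence of $\nabla^c$-parallelism together with this elementary vanishing observation.
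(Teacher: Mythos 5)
Your proof is correct and is exactly the paper's (implicit) argument: parallelism of $T^c$ and $F^c$ gives $d^cT^c=\delta^cT^c=0$ and $d^cF^c=\delta^cF^c=0$ directly from the definitions of $d^c$ and $\delta^c$, and substituting into propositions \ref{prop:2} and \ref{prop:3} yields the four identities, with the quadratic $T^c$-term in $\delta T^c$ vanishing because each summand $\left(e_i\hook e_j\hook T^c\right)\wedge\left(e_i\hook e_j\hook T^c\right)$ is a $1$-form wedged with itself. Your treatment of that one cancellation is precisely the point the paper leaves tacit, so nothing is missing.
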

The curvature $\mca{R}^c$ of $\nabla^c$, viewed as $(4,0)$-tensor, satisfies
\begin{align*}
\mca{R}^c\left(X,Y,Z,V\right)&=g\left(\nabla^c_X\nabla^c_YZ-\nabla^c_Y\nabla^c_XZ-\nabla^c_{\left[X,Y\right]}Z,V\right)\\
&=g\left(\nabla^g_X\nabla^c_YZ-\nabla^g_Y\nabla^c_XZ-\nabla^g_{\left[X,Y\right]}Z,V\right)\\
&\phantom{=\,}+\A^c\left(X,\nabla^c_YZ,V\right)-\A^c\left(Y,\nabla^c_XZ,V\right)-\A^c\left(\left[X,Y\right],Z,V\right).
\end{align*}
Using
\begin{align*}
g\left(\nabla^g_X\nabla^c_YZ,V\right)&=X\left(g\left(\nabla^c_YZ,V\right)\right)-g\left(\nabla^c_YZ,\nabla^g_XV\right)\\
&=g\left(\nabla^g_X\nabla^g_YZ,V\right)+X\left(\A^c\left(Y,Z,V\right)\right)-\A^c\left(Y,Z,\nabla^g_XV\right)\\
&=g\left(\nabla^g_X\nabla^g_YZ,V\right)+X\left(\A^c\left(Y,Z,V\right)\right)-\A^c\left(Y,Z,\nabla^c_XV\right)\\
&\phantom{=\,}+\sum_i\A^c\left(X,V,e_i\right)\A^c\left(Y,Z,e_i\right)
\end{align*}
and
\begin{align*}
\A^c\left(\left[X,Y\right],Z,V\right)&=\A^c\left(\nabla^c_XY-\nabla^c_YX,Z,V\right)-\sum_i\T^c\left(X,Y,e_i\right)\A^c\left(e_i,Z,V\right),
\end{align*}
we conclude
\begin{prop}\label{prop:5}
Let $\left(M^8,g,\rho\right)$ be a $\PSU(3)$-structure of type $\W_1\op\ldots\op\W_5$. The curvature tensor $\mca{R}^c$ and the Riemannian curvature tensor $\mca{R}^g$ of $\left(M^8,g,\rho\right)$ are related via
\begin{align*}
\mca{R}^c\left(X,Y,Z,V\right)&=\mca{R}^g\left(X,Y,Z,V\right)+\nabla^c_X\A^c\left(Y,Z,V\right)-\nabla^c_Y\A^c\left(X,Z,V\right)\\
&\phantom{=\,}+\sum_i\left(\T^c\left(X,Y,e_i\right)\A^c\left(e_i,Z,V\right)+\A^c\left(X,V,e_i\right)\A^c\left(Y,Z,e_i\right)\right.\\
&\phantom{=+\sum_i\big(\,}\left.-\A^c\left(Y,V,e_i\right)\A^c\left(X,Z,e_i\right)\right).
\end{align*}
\end{prop}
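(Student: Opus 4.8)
The plan is to express the characteristic curvature $\mca{R}^c$ directly from its definition and systematically rewrite every term until the Levi-Civita curvature $\mca{R}^g$ emerges, with the correction terms organized exactly as stated. The key observation is that $\nabla^c = \nabla^g + \A^c$ at the level of covariant derivatives, so I would substitute $\nabla^c = \nabla^g + \A^c$ everywhere in the definition
\beqn
\mca{R}^c\left(X,Y,Z,V\right)=g\left(\nabla^c_X\nabla^c_YZ-\nabla^c_Y\nabla^c_XZ-\nabla^c_{\left[X,Y\right]}Z,V\right)
\eeqn
and expand. The first step, already carried out in the excerpt, splits off the outer $\nabla^c$ into a $\nabla^g$ piece plus three $\A^c$-terms, reducing the problem to analyzing $g\bigl(\nabla^g_X\nabla^c_YZ-\nabla^g_Y\nabla^c_XZ-\nabla^g_{[X,Y]}Z,V\bigr)$.

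The second step is to handle the mixed term $g(\nabla^g_X\nabla^c_YZ,V)$ using the displayed Leibniz computation: I would write $\nabla^c_YZ = \nabla^g_YZ + \sum_i \A^c(Y,Z,e_i)e_i$ and differentiate, using metric-compatibility of $\nabla^g$ to move the derivative onto the scalar $\A^c(Y,Z,V)$, then convert the resulting $\nabla^g_X V$ back into $\nabla^c_X V$ at the cost of one further $\A^c\cdot\A^c$ term. This is precisely the four-line identity already stated in the excerpt. Antisymmetrizing in $X,Y$ then produces $\mca{R}^g(X,Y,Z,V)$ together with the difference $X(\A^c(Y,Z,V))-Y(\A^c(X,Z,V))$ of directional derivatives and two quadratic $\A^c$-terms; the $\nabla^c_X V$ corrections will be absorbed below.

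The third step is the bracket term. Here I would use the torsion identity $[X,Y]=\nabla^c_XY-\nabla^c_YX-\T^c(X,Y,\cdot)^\sharp$, which is the defining relation for the torsion of $\nabla^c$, to rewrite $\A^c([X,Y],Z,V)$ as the displayed combination $\A^c(\nabla^c_XY-\nabla^c_YX,Z,V)-\sum_i\T^c(X,Y,e_i)\A^c(e_i,Z,V)$. The decisive bookkeeping is then to recognize that the scalar directional derivatives $X(\A^c(Y,Z,V))$, together with the $\nabla^c_X$ acting on the various slots, assemble exactly into the tensorial derivatives $\nabla^c_X\A^c(Y,Z,V)$ via the Leibniz rule
\beqn
\nabla^c_X\A^c(Y,Z,V)=X(\A^c(Y,Z,V))-\A^c(\nabla^c_XY,Z,V)-\A^c(Y,\nabla^c_XZ,V)-\A^c(Y,Z,\nabla^c_XV).
\eeqn
Collecting the pieces, the $\A^c(\cdots,\nabla^c_XZ,\cdots)$ and $\A^c(\nabla^c_XY,\cdots)$ terms cancel against their counterparts, leaving $\nabla^c_X\A^c(Y,Z,V)-\nabla^c_Y\A^c(X,Z,V)$ plus the explicit $\T^c\cdot\A^c$ and $\A^c\cdot\A^c$ quadratic terms.

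The main obstacle is purely combinatorial rather than conceptual: one must track every slot-dependence of $\A^c$ and verify that each spurious $\nabla^c$-on-a-slot term produced by expanding the directional derivatives is matched, with the correct sign, by a term arising from the Leibniz expansion of the covariant-derivative correction or from the bracket rewriting. I expect the sign management in the antisymmetrization $X\leftrightarrow Y$, together with the two distinct quadratic contractions $\A^c(X,V,e_i)\A^c(Y,Z,e_i)$ and $\A^c(Y,V,e_i)\A^c(X,Z,e_i)$, to require the most care. Once these cancellations are confirmed, the stated formula follows directly; note that the derivation uses only $\nabla^c=\nabla^g+\A^c$ and the torsion relation, so it holds for any metric connection and in particular does not require $\nabla^c\T^c=0$.
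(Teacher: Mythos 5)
Your proposal is correct and follows essentially the same route as the paper: expanding the definition of $\mca{R}^c$ with $\nabla^c=\nabla^g+\A^c$, using metric compatibility of $\nabla^g$ to produce the quadratic terms $\A^c\left(X,V,e_i\right)\A^c\left(Y,Z,e_i\right)$, rewriting the bracket via the torsion identity, and reassembling the slot-derivative terms into $\nabla^c_X\A^c\left(Y,Z,V\right)-\nabla^c_Y\A^c\left(X,Z,V\right)$ by the Leibniz rule. Your closing remark that the identity needs no parallel-torsion hypothesis is also correct and consistent with the proposition as stated.
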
\noindent
In the case of parallel characteristic torsion, we compute
\begin{align*}
\mfr{S}_{X,Y,Z}\mca{R}^c\left(X,Y,Z,V\right)&=\sum_i\left(\T^c\left(X,Y,e_i\right)\T^c\left(e_i,Z,V\right)+\T^c\left(Y,Z,e_i\right)\T^c\left(e_i,X,V\right)\right.\\
&\phantom{=\sum_i\big(\,}\left.+\T^c\left(Z,X,e_i\right)\T^c\left(e_i,Y,V\right)\right).
\end{align*}
\begin{cor}\label{cor:1}
Let $\left(M^8,g,\rho\right)$ be a $\PSU(3)$-structure of type $\W_1\op\ldots\op\W_5$ with parallel characteristic torsion, i.e.\ $\nabla^cT^c=0$ and $\nabla^cF^c=0$. Then
\begin{align*}
\mfr{S}_{X,Y,Z}\mca{R}^c\left(X,Y,Z,V\right)&=\sum_i\left(\left(e_i\hook T^c\right)-\left(\left(e_i\hook\rho\right)\hook F^c\right)\right)\wedge\\
&\phantom{=\sum_i(\,}\left(\left(e_i\hook V\hook T^c\right)-\left(e_i\hook\left(\left(V\hook\rho\right)\hook F^c\right)\right)\right)\left(X,Y,Z\right).
\end{align*}
\end{cor}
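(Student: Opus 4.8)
The plan is to start from Proposition \ref{prop:5}, which expresses $\mca{R}^c$ in terms of $\mca{R}^g$, the covariant derivatives $\nabla^c\A^c$, and quadratic terms in $\A^c$ and $\T^c$. I would take the cyclic sum $\mfr{S}_{X,Y,Z}$ of that identity. Under the hypothesis of parallel characteristic torsion, Proposition \ref{prop:1} gives $\nabla^c\A^c=0$, so every derivative term drops out; and the first Bianchi identity for the Levi-Civita connection gives $\mfr{S}_{X,Y,Z}\mca{R}^g\left(X,Y,Z,V\right)=0$. This is precisely the computation already displayed immediately before the corollary, yielding
\beqn
\mfr{S}_{X,Y,Z}\mca{R}^c\left(X,Y,Z,V\right)=\mfr{S}_{X,Y,Z}\sum_i\T^c\left(X,Y,e_i\right)\T^c\left(e_i,Z,V\right).
\eeqn
So the only remaining task is to rewrite this purely torsion-theoretic right-hand side in terms of the characteristic forms $T^c$ and $F^c$.

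The main step is therefore bookkeeping: substitute the explicit form of $\T^c$ from \autoref{sec:5}, namely $\T^c\left(X,Y,Z\right)=T^c\left(X,Y,Z\right)-\left(\left(Z\hook\rho\right)\hook F^c\right)\left(X,Y\right)$, into the quadratic expression. The idea is that contracting the basis vector $e_i$ against one slot of each $\T^c$ factor should reassemble the sum into a wedge product of $2$-forms. Concretely, for a fixed $V$ the combination $\left(e_i\hook T^c\right)-\left(\left(e_i\hook\rho\right)\hook F^c\right)$ is a $2$-form in the remaining two arguments, and the second factor $\T^c\left(e_i,Z,V\right)$ should be recognized, after antisymmetrization from the cyclic sum, as $\left(\left(e_i\hook V\hook T^c\right)-\left(e_i\hook\left(\left(V\hook\rho\right)\hook F^c\right)\right)\right)$ evaluated on the same two arguments. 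Summing over $i$ then produces exactly the claimed wedge of these two $2$-form-valued expressions, evaluated on $\left(X,Y,Z\right)$.

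The main obstacle I expect is purely combinatorial: keeping the sign conventions straight when passing from the contracted quadratic torsion sum to the wedge-product form, and verifying that the cyclic sum $\mfr{S}_{X,Y,Z}$ is precisely what converts the bilinear contraction $\sum_i\T^c\left(X,Y,e_i\right)\T^c\left(e_i,Z,V\right)$ into a totally antisymmetric expression in $X,Y,Z$ equal to the wedge product. The skew-symmetry of $\T^c$ in its first two slots and the defining identity $\mca{T}^c\left(X,Y,Z\right)=\A^c\left(X,Y,Z\right)-\A^c\left(Y,X,Z\right)$ are the facts that make the two factors line up into the $\hook$-and-$\wedge$ pattern on the right-hand side. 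No new conceptual input is needed beyond Propositions \ref{prop:5} and \ref{prop:1} and the explicit torsion formula; the content is entirely in the careful index manipulation.
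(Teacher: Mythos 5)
Your proposal is correct and follows essentially the same route as the paper: the paper also takes the cyclic sum of Proposition \ref{prop:5}, uses $\nabla^c\A^c=0$ (Proposition \ref{prop:1}) and the first Bianchi identity for $\nabla^g$ to reduce to $\mfr{S}_{X,Y,Z}\sum_i\T^c\left(X,Y,e_i\right)\T^c\left(e_i,Z,V\right)$, and then rewrites this via $\T^c\left(X,Y,Z\right)=T^c\left(X,Y,Z\right)-\left(\left(Z\hook\rho\right)\hook F^c\right)\left(X,Y\right)$, the cyclic sum being exactly the wedge of the resulting $2$-form with the resulting $1$-form for fixed $V$. The only cosmetic remark is that no antisymmetrization is needed to identify the second factor: $\T^c\left(e_i,Z,V\right)$ equals $\left(\left(e_i\hook V\hook T^c\right)-\left(e_i\hook\left(\left(V\hook\rho\right)\hook F^c\right)\right)\right)\left(Z\right)$ pointwise.
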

%
%
%
%
\section{Restricting the characteristic holonomy}\noindent
The holonomy algebra $\hol\left(\nabla^c\right)$ of the characteristic connection (the \emph{characteristic holonomy}) is a Lie subalgebra of $\psu\left(3\right)$ defined up to the adjoint action of $\PSU\left(3\right)$.
\begin{lem}\label{lem:8}
Under the adjoint action of $\PSU\left(3\right)$, any maximal Lie subalgebra of $\psu(3)$ is conjugate to either
\beqn
\R\op\su_c\left(2\right)=\mrm{span}\left(\omega_5,\omega_6,\omega_7,\omega_8\right)
\eeqn
or
\beqn
\so\left(3\right)=\mrm{span}\left(\omega_1,\omega_4,\omega_5\right).
\eeqn
\end{lem}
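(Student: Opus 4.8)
The plan is to study a maximal subalgebra through the standard action of $\psu(3)\cong\su(3)$ on $\C^3$ and to split the argument according to whether this action is reducible or irreducible. Let $\h\subset\su(3)$ be a maximal subalgebra and let $H\subset\SU(3)$ be the corresponding connected subgroup. Since $\h$ is compact, hence reductive, its representation on $\C^3$ is completely reducible, so exactly one of the following two situations occurs, and in each I will both pin down the isomorphism type of $\h$ and match it with one of the two listed spans.

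First I would treat the reducible case. If $H$ leaves invariant a nonzero proper subspace $V\subsetneq\C^3$, then complete reducibility gives $\C^3=V\op V^\perp$ with both summands $\h$-invariant, whence $\h\subseteq\mathrm{S}\!\left(\Un(\dim V)\x\Un(\dim V^\perp)\right)\cap\su(3)$. As $\{\dim V,\dim V^\perp\}=\{1,2\}$, this ambient algebra is isomorphic to $\un(2)=\R\op\su(2)$, and maximality of $\h$ forces equality. All such $2{+}1$ splittings are $\SU(3)$-equivalent, so these copies form a single conjugacy class; a short bracket computation with the displayed forms $\omega_i=e_i\hook\rho$ identifies this class with $\mathrm{span}(\omega_5,\omega_6,\omega_7,\omega_8)=\R\op\su_c(2)$.

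Next I would treat the irreducible case, which I expect to be the main obstacle. I would first observe that $\h$ must be semisimple: any central element commutes with all of $\h$, hence acts by a scalar by Schur's lemma, and being trace-free it vanishes. A nonzero semisimple compact subalgebra that acts irreducibly on $\C^3$ and is not all of $\su(3)$ must be $\su(2)$ acting through its unique $3$-dimensional irreducible representation, the symmetric square of the standard representation, which is of real type and whose image is $\so(3)\subset\su(3)$. Establishing this is exactly where care is needed: it rests on the classification of the finite-dimensional irreducible representations of $\su(2)$ together with the fact that no other compact simple Lie algebra admits a faithful $3$-dimensional irreducible representation, and on the uniqueness (up to conjugacy) of such an embedding. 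Once this is in place, $\h$ is conjugate to $\so(3)$, and a direct computation matches it with $\mathrm{span}(\omega_1,\omega_4,\omega_5)$.

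Finally I would record that both subalgebras are genuinely maximal, so that the two cases are nonvacuous and exhaustive. This again follows from the dichotomy together with a dimension count: a proper subalgebra strictly containing $\so(3)$ would still act irreducibly and hence be three-dimensional, which is impossible; and a proper subalgebra strictly containing $\un(2)$ would, whether it acts reducibly or irreducibly, have dimension at most $4=\dim\un(2)$, again impossible. Thus every maximal subalgebra of $\psu(3)$ is conjugate to $\R\op\su_c(2)$ or to $\so(3)$, as claimed. The only remaining work beyond the irreducible-case classification is the finite, if slightly tedious, verification of closure and isomorphism type for the two explicit spans via the commutators of the listed $2$-forms $\omega_i$.
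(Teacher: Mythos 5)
The paper gives no proof of this lemma at all: it is quoted as a known fact (it is the $\su(3)$ instance of Dynkin's classification of maximal subalgebras of compact simple Lie algebras), so there is no internal argument to compare yours against. Judged on its own merits, your proof follows the standard route and its skeleton is correct. Working with the action of $\psu(3)\cong\su(3)$ on $\C^3$, the reducible case is clean: complete reducibility gives a splitting $\C^3=V\op V^\perp$, the stabilizing subalgebra is $\mfr{s}(\un(1)\op\un(2))\cong\R\op\su(2)$, maximality forces equality, and transitivity of $\SU(3)$ on complex lines gives a single conjugacy class. Your Schur argument that an irreducibly acting subalgebra has trivial center (central elements act as trace-free scalars, hence vanish) is also correct, as are the closing dimension counts showing that both subalgebras really are maximal and the two cases are exhaustive. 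What your argument buys over the paper's bare assertion is visible logic; what it costs is that two ingredients are only cited, not proved: the low-dimensional representation theory, and the bracket computations identifying the two conjugacy classes with $\mrm{span}(\omega_5,\omega_6,\omega_7,\omega_8)$ and $\mrm{span}(\omega_1,\omega_4,\omega_5)$. The latter computations are not optional decoration here, since the paper uses precisely these explicit bases in the holonomy analysis that follows, but they are finite and routine.

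One step is genuinely incomplete as written. In the irreducible case you assert that a nonzero \emph{semisimple} compact subalgebra acting irreducibly on $\C^3$, other than $\su(3)$ itself, must be $\su(2)$ in its $3$-dimensional irreducible representation, and you justify this by the classification of irreps of $\su(2)$ together with the fact that no other compact \emph{simple} Lie algebra has a faithful $3$-dimensional irrep. Those facts cover only simple $\h$; you still have to exclude semisimple non-simple candidates such as $\su(2)\op\su(2)$. The fix is one line: a complex irreducible representation of a direct sum $\h_1\op\h_2$ is an outer tensor product of irreducibles, so its dimension is the product of the factor dimensions; since $3$ is prime, one factor would have to act trivially, contradicting the faithfulness that is automatic for a subalgebra $\h\subset\su(3)$ acting by inclusion. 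With that line inserted (and the deferred computations carried out), your proof is complete; note also that your maximality checks quietly use this same strengthened dichotomy for arbitrary, not just maximal, proper subalgebras, so the fix is load-bearing there too.
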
\noindent
The Lie algebra
\beqn
\su_c\left(2\right)=\mrm{span}\left(\omega_5,\omega_6,\omega_7\right)
\eeqn
is the centralizer of
\beqn
\su(2)= \mrm{span}\left(e_{13}+e_{24},e_{14}-e_{23},e_{12}-e_{34}\right)
\eeqn
inside $\g_2\subset\spin\left(7\right)\subset\so\left(8\right)$ (cf.\ \cite{Puh09}), which is isomorphic, but not conjugate, to $\su(2)$. A maximal torus in $\psu\left(3\right)$ is
\beqn
\tor^2=\mrm{span}\left(\omega_7,\omega_8\right).
\eeqn
The $1$-dimensional Lie algebra generated by $\omega_7$ will be denoted by $\tor^1$.

Up to a factor, $\rho$ is the only $\psu\left(3\right)$-invariant differential form on $\left(M^8,g,\rho\right)$. Applying proposition \ref{prop:1} this proves
\begin{prop}\label{prop:6}
Let $\left(M^8,g,\rho\right)$ be a $\PSU\left(3\right)$-structure of type $\W_1\op\ldots\op\W_5$ with parallel characteristic torsion and $\hol\left(\nabla^c\right)=\psu\left(3\right)$. Then $\left(M^8,g,\rho\right)$ is integrable, i.e.\ $\Gamma=0$.
\end{prop}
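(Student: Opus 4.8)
The plan is to convert $\nabla^c$-parallelism into $\psu(3)$-invariance by means of the holonomy principle, and then to kill $T^c$ and $F^c$ by elementary representation theory. First, since $\left(M^8,g,\rho\right)$ has parallel characteristic torsion, proposition \ref{prop:1} yields $\nabla^cT^c=0$ and $\nabla^cF^c=0$, so both characteristic forms are $\nabla^c$-parallel tensor fields.

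Next I would invoke the holonomy principle in the form valid for an arbitrary linear connection: a tensor field is $\nabla^c$-parallel if and only if its value at a point is fixed by the restricted holonomy group $\Hol\left(\nabla^c\right)$, equivalently annihilated by the holonomy algebra $\hol\left(\nabla^c\right)$. Fixing an adapted frame at a point $p$ identifies $T_pM^8$ with $\R^8$ and $\hol\left(\nabla^c\right)$ with a fixed conjugate copy inside $\so\left(8\right)$; under the hypothesis $\hol\left(\nabla^c\right)=\psu\left(3\right)$ this shows that $T^c$ and $F^c$ are $\psu\left(3\right)$-invariant elements of $\Lambda^3$ and $\Lambda^4$ respectively.

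Now I would read off the conclusion from the module decompositions recalled in \autoref{sec:2}. By construction $T^c\in\Lambda^3_8\op\Lambda^3_{20}\op\Lambda^3_{27}$ and $F^c\in\ast\Lambda^4_8\op\ast\Lambda^4_{27}$, and each of these summands is a nontrivial irreducible $\psu\left(3\right)$-module. In particular the trivial summand $\Lambda^3_1=\R\cdot\rho$ does not occur in $T^c$, consistent with the remark that $\rho$ is, up to a factor, the only $\psu\left(3\right)$-invariant form, while $\Lambda^4$ carries no trivial summand at all. Since a nontrivial irreducible representation has no nonzero invariant vectors, $\psu\left(3\right)$-invariance forces $T^c=0$ and $F^c=0$. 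By corollary \ref{cor:2} the vanishing of the characteristic forms means $\T^c=0$ and integrability; as the structure is of type $\W_1\op\ldots\op\W_5$ we also have $\Gamma_6=0$, whence $\Gamma=0$.

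The only step that requires genuine care is the passage from $\nabla^c$-parallelism to pointwise $\psu\left(3\right)$-invariance: because $\nabla^c$ is metric but carries torsion, one must apply the holonomy principle in its general form, and one must align the abstractly defined $\hol\left(\nabla^c\right)$, which is determined only up to the adjoint action of $\PSU\left(3\right)$, with the fixed copy $\psu\left(3\right)\subset\so\left(8\right)$ used to define the modules. Once this identification is in place, the vanishing of $T^c$ and $F^c$ is immediate from the absence of invariants, and the remainder is bookkeeping with the decomposition of $\Gamma$.
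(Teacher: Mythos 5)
Your proposal is correct and follows essentially the same route as the paper: the paper's proof consists precisely of the observation that, up to a factor, $\rho$ is the only $\psu(3)$-invariant differential form, combined with proposition \ref{prop:1} and the holonomy principle. You have merely spelled out the intermediate steps (parallelism of $T^c,F^c$, their $\psu(3)$-invariance, and the absence of trivial summands in $\Lambda^3_8\op\Lambda^3_{20}\op\Lambda^3_{27}$ and $\ast\Lambda^4_8\op\ast\Lambda^4_{27}$) that the paper leaves implicit.
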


The space $\mca{K}\left(\h\right)$ of algebraic curvature tensors with values in $\h$,
\beqn
\mca{K}\left(\h\right):=\left\{\mca{R}\in\Lambda^2\ox\h\,\,:\,\,
\mfr{S}_{X,Y,Z}\mca{R}\left(X,Y,Z,V\right)=0\right\},
\eeqn
is trivial for $\h=\R\op\su_c\left(2\right),\so\left(3\right)$. Lemma 5.6 in \cite{CS04} therefore leads to
\begin{prop}\label{prop:4}
Let $\left(M^8,g,\rho\right)$ be a non-integrable $\PSU\left(3\right)$-structure of type
\beqn
\W_1\op\W_2\op\W_3\op\W_4\op\W_5
\eeqn
with parallel characteristic torsion $\T^c$. Then its characteristic connection $\nabla^c$ is an Ambrose-Singer connection, i.e.\
\beqn
\nabla^c\T^c=0,\quad\nabla^c\mca{R}^c=0.
\eeqn
\end{prop}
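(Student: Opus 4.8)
The plan is to combine the algebraic triviality of the relevant curvature spaces with the general Ambrose-Singer machinery of \cite{CS04}. The statement to prove is that under the hypotheses (non-integrable, type $\W_1\op\ldots\op\W_5$, parallel characteristic torsion $\nabla^c\T^c=0$), the characteristic connection additionally satisfies $\nabla^c\mca{R}^c=0$. The key structural input is already assembled: by proposition \ref{prop:1}, parallel $\T^c$ is equivalent to $\nabla^cT^c=0$ and $\nabla^cF^c=0$, hence $\nabla^c\A^c=0$; and by lemma \ref{lem:8} together with propositions \ref{prop:6} and \ref{prop:4}'s surrounding discussion, the characteristic holonomy $\h:=\hol\left(\nabla^c\right)$ is a proper subalgebra of $\psu(3)$, so (after excluding the full $\psu(3)$ case handled separately by proposition \ref{prop:6}) it is contained in one of the maximal subalgebrae $\R\op\su_c(2)$ or $\so(3)$.

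First I would invoke the Ambrose-Singer holonomy theorem in the form used by Cleyton-Swann: for a metric connection with parallel torsion, the curvature operator $\mca{R}^c$ takes values in $\hol\left(\nabla^c\right)$, i.e.\ $\mca{R}^c\in\Lambda^2\ox\h$. The first Bianchi identity for a connection with parallel torsion (the formula displayed just before corollary \ref{cor:1}, namely $\mfr{S}_{X,Y,Z}\mca{R}^c\left(X,Y,Z,V\right)=\sum_i\T^c\left(X,Y,e_i\right)\T^c\left(e_i,Z,V\right)+\ldots$) shows that the failure of the usual Bianchi symmetry of $\mca{R}^c$ is a fixed expression in the \emph{parallel} tensor $\T^c$. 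The decisive point is that $\mca{K}(\h)$, the space of genuine algebraic curvature tensors satisfying the full first Bianchi identity with values in $\h$, is trivial for both $\h=\R\op\su_c(2)$ and $\h=\so(3)$; this is the explicit computation the excerpt asserts just before the proposition.

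The mechanism of lemma 5.6 in \cite{CS04} is then the following: for a connection with parallel skew (or, more generally, fixed-type) torsion whose holonomy lies in a subalgebra $\h$ with $\mca{K}(\h)=0$, one differentiates the Bianchi-type identity. Since $\T^c$ is $\nabla^c$-parallel, the right-hand side of the cyclic-sum identity is $\nabla^c$-parallel, so $\nabla^c_W\mca{R}^c$ again lies in $\Lambda^2\ox\h$ and has \emph{vanishing} cyclic sum, i.e.\ $\nabla^c_W\mca{R}^c\in\mca{K}(\h)$ for every $W$. But $\mca{K}(\h)=0$, whence $\nabla^c\mca{R}^c=0$. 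Concretely I would: (1) reduce to $\h\subset\R\op\su_c(2)$ or $\h\subset\so(3)$ via lemma \ref{lem:8} and proposition \ref{prop:6}; (2) record $\mca{R}^c\in\Lambda^2\ox\h$ from Ambrose-Singer; (3) verify $\mca{K}(\h)=0$ in both cases; (4) apply lemma 5.6 of \cite{CS04} to conclude $\nabla^c\mca{R}^c=0$.

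The main obstacle is verifying $\mca{K}(\h)=0$ for the two subalgebrae, which is where the genuine content sits. For $\h=\R\op\su_c(2)$ (dimension $4$) and $\h=\so(3)$ (dimension $3$), one must check that every $\mca{R}\in\Lambda^2\ox\h$ satisfying the first Bianchi identity $\mfr{S}_{X,Y,Z}\mca{R}\left(X,Y,Z,V\right)=0$ vanishes identically; this is a finite-dimensional linear-algebra computation on $\R^8$ exploiting the explicit forms $\omega_5,\omega_6,\omega_7,\omega_8$ and $\omega_1,\omega_4,\omega_5$. The subtlety is that $\su_c(2)$ acts on $\R^8$ in a way that is \emph{not} conjugate to the standard $\su(2)$ (as the excerpt stresses), so the vanishing of $\mca{K}$ cannot simply be quoted from the list of Riemannian holonomies but must be established for the actual representation at hand; once that linear algebra is done, the rest is the formal Ambrose-Singer argument, which is routine given proposition \ref{prop:1} and the cited lemma.
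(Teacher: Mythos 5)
Your proof is correct and follows the paper's argument exactly: the paper likewise reduces to the maximal subalgebrae $\R\op\su_c\left(2\right)$ and $\so\left(3\right)$ (via lemma \ref{lem:8} and proposition \ref{prop:6}), asserts the triviality of $\mca{K}\left(\h\right)$ for these, and then invokes lemma 5.6 of \cite{CS04}. Your spelled-out differentiation mechanism (parallel torsion makes the cyclic-sum defect of $\mca{R}^c$ parallel, so $\nabla^c_W\mca{R}^c\in\mca{K}\left(\h\right)=0$) is precisely what that citation encapsulates.
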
\noindent
With \cites{AS58,TV83} we immediately deduce
\begin{cor}
Let $\left(M^8,g,\rho\right)$ be a simply-connected and complete $\PSU\left(3\right)$-manifold of type $\W_1\op\ldots\op\W_5$ with non-zero parallel characteristic torsion. Then $\left(M^8,g\right)$ is homogeneous.
\end{cor}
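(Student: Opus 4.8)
The plan is to recognize this as a direct application of the Ambrose-Singer characterization of homogeneous Riemannian manifolds, with the characteristic connection $\nabla^c$ playing the role of the canonical connection. First I would observe that the hypothesis of non-zero characteristic torsion $\T^c\neq0$ forces $\left(M^8,g,\rho\right)$ to be non-integrable by corollary \ref{cor:2}, so that proposition \ref{prop:4} applies and yields $\nabla^c\T^c=0$ together with $\nabla^c\mca{R}^c=0$.

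The core of the argument is to verify the three Ambrose-Singer conditions for the homogeneous structure tensor $\A^c=\nabla^c-\nabla^g$. The metric condition $\nabla^cg=0$ holds because $\nabla^c$ is a metric connection by construction. The condition $\nabla^c\A^c=0$ is exactly the equivalence (ii) of proposition \ref{prop:1}, which is guaranteed by the parallelism of $\T^c$. The remaining and only nontrivial condition is $\nabla^c\mca{R}^g=0$, the parallelism of the \emph{Riemannian} curvature with respect to $\nabla^c$.

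To establish $\nabla^c\mca{R}^g=0$ I would invoke proposition \ref{prop:5}: in the presence of parallel characteristic torsion the first-order terms $\nabla^c_X\A^c\left(Y,Z,V\right)-\nabla^c_Y\A^c\left(X,Z,V\right)$ vanish, so that $\mca{R}^g$ equals $\mca{R}^c$ minus an expression that is purely algebraic and quadratic in the $\nabla^c$-parallel tensors $\T^c$ and $\A^c$. Applying $\nabla^c$ to this identity and using $\nabla^c\mca{R}^c=0$ from proposition \ref{prop:4} together with $\nabla^c\T^c=\nabla^c\A^c=0$, every term is annihilated and hence $\nabla^c\mca{R}^g=0$.

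With all three conditions in place, $\A^c$ is a homogeneous Riemannian structure in the sense of \cites{AS58,TV83}. Since $\left(M^8,g\right)$ is assumed connected, simply-connected, and complete, the Ambrose-Singer theorem then guarantees that $\left(M^8,g\right)$ is a homogeneous Riemannian manifold. The main (modest) obstacle is purely bookkeeping: confirming that the quadratic correction terms in proposition \ref{prop:5} are built solely from the parallel tensors $\T^c$ and $\A^c$ and involve no bare derivatives, so that differentiating the curvature identity by $\nabla^c$ indeed produces zero.
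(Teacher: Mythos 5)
Your proposal is correct and follows essentially the same route as the paper: the paper likewise deduces the corollary immediately from proposition \ref{prop:4} (via corollary \ref{cor:2}, non-zero torsion forces non-integrability, so $\nabla^c$ is an Ambrose-Singer connection) together with the theorems of \cites{AS58,TV83}. The only difference is that you explicitly verify the Tricerri--Vanhecke homogeneous-structure conditions, including $\nabla^c\mca{R}^g=0$ via proposition \ref{prop:5}, whereas the paper invokes the Ambrose-Singer theorem directly in its connection form ($\nabla^c\T^c=0$, $\nabla^c\mca{R}^c=0$), for which no such bridging step is needed.
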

By lemma \ref{lem:8} $\hol\left(\nabla^c\right)\subset\psu\left(3\right)$ is one of
\beqn
\R\op\su_c\left(2\right),\quad\su_c\left(2\right),\quad\tor^2,\quad\so(3),\quad\tor^1,\quad\zero,
\eeqn
where $\zero$ denotes the zero algebra. We discuss the first four cases in detail.
%
%
\subsection{The cases \texorpdfstring{$\hol\left(\nabla^c\right)=\R\op\su_c\left(2\right)$, $\su_c\left(2\right)$, $\tor^2$}{}}
Here, the $4$-form 
\beqn
\Phi:=\phi+\ast\phi,\quad\phi:=\left(e_{246}-e_{235}-e_{145}-e_{136}+e_{127}+e_{347}+e_{567}\right)\wedge e_8,
\eeqn
is globally well defined and $\nabla^c$-parallel. Consequently, $\left(M^8,g\right)$ admits a $\Spin(7)$-structure with fundamental form $\Phi$. We recall certain facts on these structures. Any $\Spin\left(7\right)$-manifold $\left(M^8,g,\Phi\right)$ admits (see \cite{Iva04}) a unique metric connection $\tilde{\nabla}^c$ with totally skew-symmetric torsion $\tilde{T}^c$ preserving the structure, i.e.\
\beqn
\tilde{\nabla}^c\Phi=0.
\eeqn
In Cabrera's description \cite{Cab95} of the Fern\'andez classification \cite{Fer86} a $\Spin\left(7\right)$-structure is \emph{balanced} if and only if the Lee form
\beqn
\theta:=\frac{1}{7}\,\ast\left(\delta\Phi\wedge\Phi\right)
\eeqn
vanishes. Those for which $d\Phi=\theta\wedge\Phi$ holds are \emph{locally conformal parallel}.
\begin{thm}\label{thm:4}
Let $\left(M^8,g,\rho\right)$ be a $\PSU(3)$-manifold of type $\W_1\op\ldots\op\W_5$ with $\hol\left(\nabla^c\right)$ one of
\beqn
\R\op\su_c\left(2\right),\quad\su_c\left(2\right),\quad\tor^2.
\eeqn
Then $\left(M^8,g,\rho\right)$ admits a $\Spin(7)$-structure $\left(M^8,g,\Phi,\tilde{\nabla}^c,\tilde{T}^c\right)$ preserved by $\nabla^c$. The connection $\tilde{\nabla}^c$ coincides with $\nabla^c$ if $\left(M^8,g,\rho\right)$ is of type $\W_1\op\W_2\op\W_3$. Conversely, any $\Spin\left(7\right)$-manifold $\left(M^8,g,\Phi,\tilde{\nabla}^c,\tilde{T}^c\right)$ with $\hol\left(\tilde{\nabla}^c\right)=\R\op\su_c\left(2\right)$, $\su_c\left(2\right)$ or $\tor^2$ admits a $\PSU(3)$-structure of type $\W_1\op\W_2\op\W_3$ with characteristic torsion
\beqn
\pr_{\Lambda^3_8\op\Lambda^3_{20}\op\Lambda^3_{27}}\left(\tilde{T}^c\right).
\eeqn
\end{thm}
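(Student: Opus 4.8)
The plan is to lean on the observation recorded just before the statement: whenever $\hol(\nabla^c)$ is contained in $\R\op\su_c(2)\subseteq\spin(7)$, the $4$-form $\Phi$ is globally defined and $\nabla^c$-parallel, so that $(M^8,g,\Phi)$ is a $\Spin(7)$-structure preserved by $\nabla^c$. For the first assertion I would simply invoke Ivanov's theorem \cite{Iva04}: every $\Spin(7)$-structure admits a unique metric connection $\tilde\nabla^c$ with totally skew-symmetric torsion $\tilde T^c$ satisfying $\tilde\nabla^c\Phi=0$. This furnishes the data $(M^8,g,\Phi,\tilde\nabla^c,\tilde T^c)$, and $\nabla^c\Phi=0$ is precisely the statement that $\nabla^c$ preserves it.

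For the coincidence claim, suppose $(M^8,g,\rho)$ is of type $\W_1\op\W_2\op\W_3$. By corollary \ref{cor:2}(i) the characteristic torsion $\T^c$ is then totally skew-symmetric, so $\nabla^c=\nabla^g+\tfrac{1}{2}\T^c$ is itself a metric connection with skew torsion preserving $\Phi$. The uniqueness clause of Ivanov's theorem forces $\nabla^c=\tilde\nabla^c$, whence $\tilde T^c=\T^c$.

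For the converse I would reconstruct $\rho$ by the holonomy principle. Each of the three admissible algebras sits inside $\R\op\su_c(2)\subseteq\psu(3)$, and $\psu(3)$ is the stabilizer of $\rho$; hence, in a suitable frame at a point where $\Phi$ takes its standard form, the standard fundamental form (\ref{eqn:1}) is fixed by $\Hol(\tilde\nabla^c)$. Parallel transport then extends it to a global $\tilde\nabla^c$-parallel $3$-form $\rho$, and because the holonomy group lies in $\PSU(3)$ the transported form remains in the open $\GL(8)$-orbit, so $(M^8,g,\rho)$ is a bona fide $\PSU(3)$-structure. To identify its characteristic data, write $\tilde T^c=\lambda\,\rho+T$ with $\lambda\,\rho\in\Lambda^3_1$ and $T\in\Lambda^3_8\op\Lambda^3_{20}\op\Lambda^3_{27}$. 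The decisive point is that $X\hook\rho\in\Lambda^2_8=\psu(3)$ for every $X$, so the $\Lambda^3_1$-part of the torsion acts trivially on $\rho$; consequently the connection $\nabla^g+\tfrac{1}{2}T$, carrying no $4$-form term, still satisfies $\nabla\rho=0$. By theorem \ref{thm:2} the existence of such a connection forces the structure to be of type $\W_1\op\ldots\op\W_5$ with $F^c=0$ and $T^c=T$; since $F^c=0$ characterizes type $\W_1\op\W_2\op\W_3$, the characteristic torsion is $T=\pr_{\Lambda^3_8\op\Lambda^3_{20}\op\Lambda^3_{27}}(\tilde T^c)$, as asserted.

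The step I expect to be the main obstacle is this last identification: one must check cleanly that deleting the $\Lambda^3_1$-component of $\tilde T^c$ leaves $\nabla\rho=0$ intact — exactly where the inclusion $X\hook\rho\in\psu(3)$ is used — and that the surviving torsion lies in $\Lambda^3_8\op\Lambda^3_{20}\op\Lambda^3_{27}$ with vanishing $4$-form part, so that theorem \ref{thm:2} applies verbatim. A secondary point needing care is the global versus local character of the holonomy principle: producing a globally defined $\rho$ requires invariance under the full holonomy group, which is automatic once one passes to the simply-connected cover and should be phrased accordingly.
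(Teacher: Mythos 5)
Your proposal is correct and follows essentially the same route as the paper: the existence and coincidence claims come from the observed $\nabla^c$-parallelism of $\Phi$ plus the uniqueness clause of Ivanov's theorem, and the converse uses the holonomy principle to produce a global $\tilde{\nabla}^c$-parallel $\rho$ together with the key fact that $X\hook\rho\in\Lambda^2_8=\psu(3)$ annihilates $\rho$, so the $\Lambda^3_1$-part of $\tilde{T}^c$ contributes nothing to the intrinsic torsion. The only cosmetic difference is that you identify the characteristic data through the uniqueness statement of theorem \ref{thm:2}, whereas the paper reads the intrinsic torsion off the identity $\sigma_1\left(\Gamma\left(X\right),\rho\right)=\sigma_1\left(-\tfrac{1}{2}\left(X\hook\tilde{T}^c\right),\rho\right)$ and cites lemma \ref{lem:3}.
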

\begin{proof}
There remains to show the second part of the theorem. As $\hol\left(\tilde{\nabla}^c\right)$ is one of
\beqn
\R\op\su_c\left(2\right),\quad \su_c\left(2\right),\quad \tor^2,
\eeqn
the $3$-form $\rho$ is globally well defined and $\tilde{\nabla}^c$-parallel. The latter yields
\beqn
\sigma_1\left(\Gamma\left(X\right),\rho\right)=\nabla^g_X\rho=\sigma_1\left(-\frac{1}{2}\left(X\hook \tilde{T}^c\right),\rho\right)
\eeqn
for the intrinsic torsion $\Gamma$ of the corresponding $\PSU\left(3\right)$-structure. Lemma \ref{lem:3} completes the proof.
\end{proof}
We now discuss the case of parallel characteristic torsion, i.e.\ $\nabla^c\T^c=0$. Propositions \ref{prop:1} and \ref{prop:4} show that
\beqn
T^c\in\Lambda^3_8\op\Lambda^3_{20}\op\Lambda^3_{27},\quad F^c\in\ast\Lambda^4_8\op\ast\Lambda^4_{27},\quad \mca{R}^c:\Lambda^2\ra\hol\left(\nabla^c\right)
\eeqn
are $\hol\left(\nabla^c\right)$-invariant. Moreover, by corollary \ref{cor:1} we have
\begin{align*}
\mfr{S}_{X,Y,Z}\mca{R}^c\left(X,Y,Z,V\right)&=\sum_i\left(\left(e_i\hook T^c\right)-\left(\left(e_i\hook\rho\right)\hook F^c\right)\right)\wedge\\
&\phantom{=\sum_i(\,}\left(\left(e_i\hook V\hook T^c\right)-\left(e_i\hook\left(\left(V\hook\rho\right)\hook F^c\right)\right)\right)\left(X,Y,Z\right).
\end{align*}
We denote by $\left(\bullet\right)$ the system of equations described above. 
\begin{thm}\label{thm:5}
Let $\left(M^8,g,\rho\right)$ be a $\PSU(3)$-structure of type $\W_1\op\ldots\op\W_5$ with parallel characteristic torsion and $\hol\left(\nabla^c\right)$ one of
\beqn
\R\op\su_c\left(2\right),\quad\su_c\left(2\right),\quad\tor^2.
\eeqn
Then $\left(M^8,g,\rho\right)$ is of type $\W_1\op\W_2\op\W_3$.
\end{thm}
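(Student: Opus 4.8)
The plan is to show that the hypotheses force the characteristic four-form $F^c$ to vanish, since by Corollary \ref{cor:2}(i) the type $\W_1\op\W_2\op\W_3$ is equivalent to $F^c=0$. The key structural input is that all three listed algebras are subalgebras of $\R\op\su_c(2)=\mrm{span}(\omega_5,\omega_6,\omega_7,\omega_8)$, so in every case $\hol(\nabla^c)\subseteq\R\op\su_c(2)$. Parallel characteristic torsion together with Proposition \ref{prop:1} gives $\nabla^cT^c=0$ and $\nabla^cF^c=0$, which by the holonomy principle means $T^c$ and $F^c$ are fixed (invariant) under the action of $\hol(\nabla^c)$, hence in particular under the larger algebra $\R\op\su_c(2)$ if we can arrange it — more precisely they are annihilated by every element of $\hol(\nabla^c)$ acting via $\vrho_\ast$.

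First I would decompose the relevant $\PSU(3)$-modules under the action of $\R\op\su_c(2)$ and pin down the subspaces of $\ast\Lambda^4_8\op\ast\Lambda^4_{27}$ consisting of vectors annihilated by this subalgebra. The point is that $F^c$ must lie in the fixed-point set of $\hol(\nabla^c)$ inside $\ast\Lambda^4_8\op\ast\Lambda^4_{27}$. I would compute this invariant subspace explicitly for each of the three candidate algebras, using the explicit generators $\omega_5,\omega_6,\omega_7,\omega_8$ given in Section \ref{sec:2} and the action $\vrho_\ast(\omega)(\alpha)=\sigma_1(\omega,\alpha)$. The natural expectation is that because $\ast\Lambda^4_8$ and $\ast\Lambda^4_{27}$ are irreducible $\PSU(3)$-modules of high dimension ($8$ and $27$), their restrictions to the small algebras $\R\op\su_c(2)$, $\su_c(2)$, $\tor^2$ contain no trivial summand, forcing every invariant vector to be zero.

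The cleanest route avoids brute-force module decomposition by exploiting the $\Spin(7)$-structure produced in Theorem \ref{thm:4}. Since $\hol(\nabla^c)$ is one of the three listed algebras, that theorem provides a $\nabla^c$-parallel $\Spin(7)$-fundamental form $\Phi$, and moreover asserts that $\tilde\nabla^c=\nabla^c$ precisely when the structure is of type $\W_1\op\W_2\op\W_3$. So it suffices to show that the canonical $\Spin(7)$-connection $\tilde\nabla^c$ with \emph{totally skew-symmetric} torsion coincides with $\nabla^c$. Both connections are metric, both preserve $\Phi$, and both have the same holonomy contained in $\R\op\su_c(2)\subset\spin(7)$; the torsion of $\tilde\nabla^c$ is a $3$-form, so if I can argue $\nabla^c$ also has totally skew-symmetric torsion then uniqueness of the canonical $\Spin(7)$-connection (Ivanov \cite{Iva04}) forces $\nabla^c=\tilde\nabla^c$, whence $\T^c\in\Lambda^3$ and thus $F^c=0$ by Lemma \ref{lem:7}(i). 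Alternatively, and perhaps more directly, I would use that the torsion $\T^c$ is $\hol(\nabla^c)$-invariant and that the cyclic part $\T^c_{160}$, which is controlled by $\rho\hook F^c$ and $10F^c-\rho\wedge(\rho\hook F^c)$ via the norm formula in Section \ref{sec:5}, must vanish because its two pieces live in modules with no $\hol(\nabla^c)$-fixed vectors.

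\textbf{The main obstacle} I anticipate is the explicit verification that $\ast\Lambda^4_8\op\ast\Lambda^4_{27}$ carries no nonzero vector fixed by each of the three algebras — in particular for the two-dimensional torus $\tor^2=\mrm{span}(\omega_7,\omega_8)$, where the invariant subspace is \emph{a priori} largest and one genuinely has to check that the weight-zero space for the $\tor^2$-action on these two modules is empty. This is a finite but delicate representation-theoretic computation; it is where the argument could fail if, say, $\ast\Lambda^4_{27}$ happened to contain a $\tor^2$-invariant line. If that computation behaves as expected, combining it with the holonomy principle ($\nabla^cF^c=0\Rightarrow F^c$ is $\hol(\nabla^c)$-fixed) immediately yields $F^c=0$ and hence, by Corollary \ref{cor:2}(i), the structure is of type $\W_1\op\W_2\op\W_3$.
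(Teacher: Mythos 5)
Your reduction of the theorem to the statement $F^c=0$ (via corollary \ref{cor:2} and lemma \ref{lem:7}) is correct, but the representation-theoretic claim you rely on to establish $F^c=0$ is false, and it fails even in the most favorable case. The fixed-point set of $\R\op\su_c\left(2\right)$ acting on $\ast\Lambda^4_8\op\ast\Lambda^4_{27}$ is not zero: the $4$-form $\phi=\left(e_{246}-e_{235}-e_{145}-e_{136}+e_{127}+e_{347}+e_{567}\right)\wedge e_8$, i.e.\ one half of the $\Spin(7)$-form $\Phi=\phi+\ast\phi$, lies in $\ast\Lambda^4_8\op\ast\Lambda^4_{27}$ and is annihilated by $\vrho_\ast\left(\omega_i\right)$ for $i=5,6,7,8$, being the wedge product of the invariant forms $\vphi_1+\vphi_2$ and $e_8$. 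Hence it is invariant under each of the three candidate holonomy algebras, and the same then holds for the cyclic-torsion pieces $\rho\hook F^c$ and $10\,F^c-\rho\wedge\left(\rho\hook F^c\right)$ you propose to use instead. Worse, this invariant line survives even after imposing the first Bianchi identity: the paper's proof shows that the full system $\left(\bullet\right)$ (invariance of $T^c,F^c,\mca{R}^c$ together with corollary \ref{cor:1}) admits, for $F^c\neq0$, exactly the one-parameter family $T^c=0$, $F^c=b\,\phi$, $\mca{R}^c=-6\,b^2\left(e_{67}\ox\omega_5-e_{57}\ox\omega_6+e_{56}\ox\omega_7\right)$ with $b\neq0$. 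So no pointwise argument of the kind you sketch --- invariance alone, invariance plus Bianchi, or the $\T^c_{160}$ variant --- can force $F^c=0$. Your $\Spin(7)$ alternative is circular for the same reason: uniqueness of Ivanov's connection only applies once you know $\T^c$ is totally skew-symmetric, which by lemma \ref{lem:7} is precisely the assertion $F^c=0$ you are trying to prove, and the coincidence $\tilde{\nabla}^c=\nabla^c$ in theorem \ref{thm:4} is stated under the hypothesis that the structure is already of type $\W_1\op\W_2\op\W_3$.

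The missing idea is a step that goes beyond algebra at a point and excludes the residual solution $F^c=b\,\phi$, $b\neq0$. The paper does this with curvature: substituting $T^c=0$, $F^c=b\,\phi$ and the above $\mca{R}^c$ into proposition \ref{prop:5} determines $\mca{R}^g$, and the Ambrose--Singer theorem then yields $\hol\left(\nabla^g\right)=\psu(3)$ up to conjugation. Since, up to a factor, $\rho$ is the only $\psu(3)$-invariant $3$-form, the holonomy principle gives $\nabla^g\rho=0$; with $T^c=0$ this reads $0=-\sigma_1\left(\left(X\hook\rho\right)\hook F^c,\rho\right)$, and lemmata \ref{lem:3} and \ref{lem:6} then force $F^c=0$, contradicting $b\neq0$. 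Some substitute for this holonomy computation is indispensable; without it your proof cannot close.
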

\begin{proof}
Assume $F^c\neq0$. Analyzing $\left(\bullet\right)$ for the cases $\hol\left(\nabla^c\right)=\R\op\su_c\left(2\right),\su_c\left(2\right),\tor^2$ there exist only one solution,
\begin{align*}
T^c&=0,\\
F^c&=b\left(e_{246}-e_{235}-e_{145}-e_{136}+e_{127}+e_{347}+e_{567}\right)\wedge e_8,\\
\mca{R}^c&=-6\,b^2\left(e_{67}\ox\omega_5-e_{57}\ox\omega_6+e_{56}\ox\omega_7\right),
\end{align*}
where $b\in\R\backslash\left\{0\right\}$. Now, proposition \ref{prop:5} and the Ambrose-Singer holonomy theorem enable us to determine the Riemannian holonomy algebra. Up to the adjoint action of $\SO\left(8\right)$, we compute $\hol\left(\nabla^g\right)=\psu(3)$, hence
\beqn
0=\nabla^g_X\rho=\vrho_\ast\left(\Gamma\left(X\right)\right)\left(\rho\right)=-\sigma_1\left(\left(X\hook\rho\right)\hook F^c,\rho\right),
\eeqn
and lemmata \ref{lem:3} and \ref{lem:6} yield $F^c=0$, a contradiction.
\end{proof}\noindent
We therefore conclude
\beqn
\tilde{\nabla}^c=\nabla^c,\quad \tilde{T}^c=T^c,\quad F^c=0.
\eeqn

Assume $\hol\left(\nabla^c\right)=\R\op\su_c\left(2\right)$. In this case, the following forms are globally well defined and $\nabla^c$-parallel
\beqn
e_8,\quad \omega_8,\quad \vphi_1:=e_{246}-e_{235}-e_{145}-e_{136}+e_{127}+e_{347},\quad \vphi_2:=e_{567}.
\eeqn
Solving $\left(\bullet\right)$ yields
\begin{align*}
T^c&=a_1\left(\vphi_1+3\,\vphi_2\right)+a_2\left(\omega_8\wedge e_8+3\,\vphi_2\right),\\
F^c&=0,\\
\mca{R}^c&=-\frac{1}{2}\left(5\,a_1^2+3\,a_1\,a_2\right)(\omega_5\ox\omega_5+\omega_6\ox\omega_6+\omega_7\ox\omega_7)\\
&\phantom{=\,}-\frac{1}{2}\left(7\,a_1^2+3\,a_1\,a_2-2\,a_2^2\right)\omega_8\ox\omega_8,
\end{align*}
where $a_1,a_2\in\R$ satisfy
\beqn
5\,a_1^2+3\,a_1\,a_2\neq0,\quad 7\,a_1^2+3\,a_1\,a_2-2\,a_2^2\neq0
\eeqn
to ensure $\hol\left(\nabla^c\right)=\R\op\su_c\left(2\right)$.
\begin{prop}
Let $\left(M^8,g,\rho\right)$ be a $\PSU\left(3\right)$-structure of type $\W_1\op\ldots\op\W_5$ with parallel characteristic torsion and $\hol\left(\nabla^c\right)=\R\op\su_c\left(2\right)$. Then $\left(M^8,g,\rho\right)$ is of type $\W_1\op\W_3$ and the corresponding $\Spin(7)$-structure is not locally conformal parallel.
\end{prop}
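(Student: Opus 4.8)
The plan is to prove the two assertions separately, using throughout that $\hol(\nabla^c)=\R\op\su_c(2)$ already forces $F^c=0$: by the discussion preceding the statement, $\nabla^c$ then coincides with the characteristic $\Spin(7)$-connection $\tilde\nabla^c$ of $\Phi$, its torsion $\tilde T^c=T^c$ is totally skew-symmetric, and solving $(\bullet)$ pins $T^c$ down to the two-parameter family $T^c=a_1(\vphi_1+3\vphi_2)+a_2(\omega_8\wedge e_8+3\vphi_2)$ with $(a_1,a_2)$ in the prescribed range.

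For the first assertion it suffices to show $T^c_{20}=0$, since $F^c=0$ already annihilates the components $\Gamma_4,\Gamma_5$ and only $\Gamma_2$, identified with $T^c_{20}\in\Lambda^3_{20}$, remains to be ruled out. Here I would invoke the holonomy principle: as $\nabla^cT^c=0$, the form $T^c$ is invariant under $\hol(\nabla^c)$, and since the splitting $\Lambda^3=\Lambda^3_1\op\Lambda^3_8\op\Lambda^3_{20}\op\Lambda^3_{27}$ is $\PSU(3)$-equivariant and $\hol(\nabla^c)\subset\psu(3)$, each summand of $T^c$ is $\hol(\nabla^c)$-invariant as well. In particular $T^c_{20}$ is an $\R\op\su_c(2)$-invariant element of $\Lambda^3_{20}\cong\m$. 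A branching computation (or, equivalently, the direct projection of the explicit $T^c$ via $\sigma_+\circ\sigma_-$ using lemma \ref{lem:3}) shows that $\m$ carries no nonzero $\R\op\su_c(2)$-invariant vector, so $T^c_{20}=0$ and the structure is of type $\W_1\op\W_3$.

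For the second assertion I would use the standard characterization (\cite{Iva04}) that a $\Spin(7)$-structure is locally conformal parallel precisely when its characteristic torsion lies in the eight-dimensional module $\{V\hook\Phi:V\in\R^8\}\subset\Lambda^3$, i.e. $\tilde T^c=V\hook\Phi$ for the Lee vector $V$. Assuming this, I differentiate the identity with $\nabla^c$; since $\nabla^c\tilde T^c=0$ and $\nabla^c\Phi=0$, and $W\mapsto W\hook\Phi$ is injective, it follows that $\nabla^cV=0$, so $V$ is fixed by $\hol(\nabla^c)$. A direct inspection of the action of $\omega_5,\omega_6,\omega_7,\omega_8$ on $\R^8$ shows that the $\R\op\su_c(2)$-fixed subspace is exactly $\R\,e_8$. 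As $\Phi=\phi+\ast\phi$ with $\phi=(\vphi_1+\vphi_2)\wedge e_8$ and $\ast\phi$ free of $e_8$, one gets $e_8\hook\Phi=-(\vphi_1+\vphi_2)$, so local conformal parallelism would force $T^c$ to be proportional to $\vphi_1+\vphi_2$.

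The contradiction then comes from the explicit form $T^c=a_1\vphi_1+3(a_1+a_2)\vphi_2+a_2\,\omega_8\wedge e_8$: proportionality to $\vphi_1+\vphi_2$ first requires $a_2=0$ to kill the $e_8$-term $\omega_8\wedge e_8$, and then $a_1(\vphi_1+3\vphi_2)\propto\vphi_1+\vphi_2$ forces $a_1=0$, contradicting the standing conditions on $(a_1,a_2)$. Hence the $\Spin(7)$-structure is not locally conformal parallel. I expect the main obstacle to be the second assertion, specifically invoking the correct torsion-type characterization of locally conformal parallel $\Spin(7)$-structures and carrying out the fixed-point computation $\mathrm{Fix}_{\R^8}(\R\op\su_c(2))=\R\,e_8$; the first assertion is a short invariance count once $F^c=0$ is available.
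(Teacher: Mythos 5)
Your proposal is correct, and it runs on the same inputs as the paper's (implicit) proof: the paper places this proposition immediately after solving $\left(\bullet\right)$, so its argument is simply to read off the type from the explicit data $T^c=a_1\left(\vphi_1+3\,\vphi_2\right)+a_2\left(\omega_8\wedge e_8+3\,\vphi_2\right)$, $F^c=0$, and to test the locally conformal parallel condition against this torsion. Where you differ is in how the two verifications are organized, and both of your arguments check out. For the type claim you bypass the explicit solution entirely: parallelism makes $T^c_{20}$ an $\R\op\su_c\left(2\right)$-invariant element of $\Lambda^3_{20}\cong\m$, and the invariance count is indeed as you assert --- decomposing $\R^8=\R\,e_8\op\mrm{span}\left(e_5,e_6,e_7\right)\op\mrm{span}\left(e_1,\ldots,e_4\right)$ under $\R\op\su_c\left(2\right)$, the fixed space of $\Lambda^2$ is exactly $\R\,\omega_8\subset\psu\left(3\right)$, so $\m$ has no invariants and $T^c_{20}=0$; this is a cleaner route than the paper's computational reading-off, though you should actually display this branching rather than just invoke it, since it is the crux of that half. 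For the non-LCP claim your chain (LCP $\Lra$ $\tilde{T}^c\in\left\{V\hook\Phi\right\}$ by the Fern\'andez--Cabrera--Ivanov dictionary, then $\nabla^cV=0$ by differentiating, then $V\in\R\,e_8$ by the fixed-vector computation, then proportionality to $\vphi_1+\vphi_2$ contradicts the explicit family) is sound; note there is a shortcut for the last step that avoids the coefficients altogether: $T^c$ lies in $\Lambda^3_8\op\Lambda^3_{20}\op\Lambda^3_{27}$ and hence is orthogonal to $\rho$, whereas $\lan\vphi_1+\vphi_2,\rho\ran=4\neq0$, so $T^c\propto\vphi_1+\vphi_2$ already forces $T^c=0$, which contradicts $a_1\left(5\,a_1+3\,a_2\right)\neq0$.
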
\noindent
The fundamental form $\rho$ can be expressed as
\beqn
\rho=\vphi_1-2\,\vphi_2+\omega_8\wedge e_8.
\eeqn
Using proposition \ref{prop:2}, we compute the following differentials
\begin{align*}
de_8&=a_2\,\omega_8,\\
d\omega_8&=0,\\
d\vphi_1&=\left(7\,a_1+3\,a_2\right)\left(e_8\hook\ast\vphi_1\right)+6\,a_1\left(e_8\hook\ast\vphi_2\right), \\
d\vphi_2&=a_1\left(e_8\hook\ast\vphi_1\right),\\
d\left(e_8\hook\ast\vphi_1\right)&=0, \\
d\left(e_8\hook\ast\vphi_2\right)&=0.
\end{align*}
Consequently, several Lie derivatives along $e_8$ vanish,
\beqn
\Lie_{e_8}\omega_8=0,\quad \Lie_{e_8}\vphi_1=0,\quad \Lie_{e_8}\vphi_2=0,\quad
\Lie_{e_8}\left(e_8\hook\ast\vphi_1\right)=0,\quad \Lie_{e_8}\left(e_8\hook\ast\vphi_2\right)=0.
\eeqn
A computation of the Riemannian Ricci tensor yields the following result:
\begin{align*}
\Ric^g\left(e_i\right)&=\frac{1}{2}\left(39\,a_1^2+18\,a_1\,a_2-3\,a_2^2\right)e_i & \text{for}\quad i&=1,\ldots,4,\\
\Ric^g\left(e_i\right)&=\frac{1}{2}\left(51\,a_1^2+42\,a_1\,a_2+9\,a_2^2\right)e_i& \text{for}\quad i&=5,\ldots,7,\\
\Ric^g\left(e_8\right)&=3\,a_2^2\,e_8.
\end{align*}
We now restrict to the regular case, i.e.\ we assume that $e_8$ induces a free action of the group $\mrm{S}^1$. The orbit space $\pi:M^8\ra\overline{N}$ is a Riemannian $7$-manifold $\overline{N}$. There exist well-defined differential forms $\overline{\omega_8}$, $\overline{\vphi_1}$ and $\overline{\vphi_2}$ on $\overline{N}$ such that
\beqn
\overline{T^c}=a_1\left(\overline{\vphi_1}+\overline{\vphi_2}\right)+\left(2\,a_1+3\,a_2\right)\overline{\vphi_2},\quad \overline{\ast}\,\overline{\vphi_1}=\overline{e_8\hook\ast\vphi_1},\quad \overline{\ast}\,\overline{\vphi_2}=\overline{e_8\hook\ast\vphi_2}.
\eeqn
Here $\overline{\ast}$ denotes the Hodge operator of $\overline{N}$. The $3$-form
$\overline{\vphi}:=\overline{\vphi_1}+\overline{\vphi_2}$ satisfies
\beqn
d\overline{\ast}\,\overline{\vphi}=0.
\eeqn
Consequently, $\overline{N}$ is a cocalibrated $\G_2$-manifold with fundamental form $\overline{\vphi}$. Any manifold of this type admits a unique metric connection with totally skew-symmetric torsion (see \cite{FI02}). Since
\beqn
\overline{T^c}=\frac{1}{6}\lan d\overline{\vphi},\overline{\ast}\,\overline{\vphi}\ran\overline{\vphi}-\overline{\ast}d\overline{\vphi},
\eeqn
this connection coincides with $\overline{\nabla^c}$. The holomy algebra of $\overline{\nabla^c}$ is $\su_c\left(2\right)\subset\g_2$ if
\beqn
7\,a_1+3\,a_2=0,
\eeqn
and $\R\op\su_c\left(2\right)\subset\g_2$ otherwise. Conversely, if $(\overline{N},\overline{g},\overline{\vphi},\overline{\nabla^c},\overline{T^c})$ is a cocalibrated $\G_2$-manifold of this type, then the differential forms $\overline{\vphi_1}$, $\overline{\vphi_2}$ and $\overline{\omega_8}$ exist and $d\overline{\omega_8}=0$ holds. Suppose the equation
\beqn
de_8=a_2\,\overline{\omega_8}
\eeqn
defines a principal $\mrm{S}^1$-bundle $\pi:M^8\ra\overline{N}$. Then the manifold $M^8$ admits a $\PSU(3)$-structure
\beqn
\rho=\pi^{\ast}\left(\overline{\vphi_1}\right)-2\,\pi^{\ast}\left(\overline{\vphi_2}\right)+\pi^{\ast}\left(\overline{\omega_8}\right)\wedge e_8
\eeqn
with parallel characteristic torsion
\beqn
T^c=a_1\left(\pi^{\ast}\left(\overline{\vphi_1}\right)+3\,\pi^{\ast}\left(\overline{\vphi_2}\right)\right)+a_2\left(\pi^{\ast}\left(\overline{\omega_8}\right)\wedge e_8+3\,\pi^{\ast}\left(\overline{\vphi_2}\right)\right)
\eeqn
and $\hol\left(\nabla^c\right)\subseteq\R\op\su_c\left(2\right)$.
\begin{thm}\label{thm:6}
Let $\left(M^8,g,\rho\right)$ be a regular $\PSU\left(3\right)$-manifold of type $\W_1\op\ldots\op\W_5$ with parallel characteristic torsion and
\beqn
\hol\left(\nabla^c\right)=\R\op\su_c\left(2\right).
\eeqn
Then $M^8$ is a principal $\mrm{S}^1$-bundle and a Riemannian submersion over a cocalibrated $\G_2$-manifold $(\overline{N},\overline{g},\overline{\vphi},\overline{\nabla^c},\overline{T^c})$ with $\overline{\nabla^c}\,\overline{T^c}=0$ and $\hol\left(\overline{\nabla^c}\right)=\R\op\su_c\left(2\right)$ or $\su_c\left(2\right)$. The Chern class of the fibration $\pi:M^8\ra\overline{N}$ is proportional to the form $\overline{\omega_8}$. Conversely, any of these fibrations admits a $\PSU\left(3\right)$-structure of type $\W_1\op\W_2\op\W_3$ with parallel characteristic torsion and characteristic holonomy contained in $\R\op\su_c\left(2\right)$.
\end{thm}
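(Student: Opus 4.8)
The statement is essentially a synthesis of the explicit computations carried out in the paragraphs immediately preceding it, so the plan is to assemble those into the two implications. \emph{Forward direction.} First I would invoke Theorem \ref{thm:5}: under $\hol\left(\nabla^c\right)=\R\op\su_c(2)$ with parallel characteristic torsion the structure is forced to be of type $\W_1\op\W_2\op\W_3$, whence $F^c=0$ and $T^c$, $\mca{R}^c$ are the explicit tensors displayed above, depending on parameters $a_1,a_2$. Since the isotropy algebra $\R\op\su_c(2)$ fixes $e_8$, the dual vector field is $\nabla^c$-parallel, and the vanishing Lie derivatives $\Lie_{e_8}\omega_8=\Lie_{e_8}\vphi_1=\Lie_{e_8}\vphi_2=0$ recorded earlier show that $e_8$ is Killing and preserves all the relevant forms. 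The regularity hypothesis upgrades its flow to a free $\mrm{S}^1$-action, so $\pi:M^8\ra\overline{N}$ is a principal $\mrm{S}^1$-bundle and a Riemannian submersion onto a $7$-manifold.

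\emph{Reduction to $\overline{N}$.} Next I would push the basic $\nabla^c$-parallel forms $\omega_8,\vphi_1,\vphi_2$ down to $\overline{\omega_8},\overline{\vphi_1},\overline{\vphi_2}$ on $\overline{N}$ and set $\overline{\vphi}:=\overline{\vphi_1}+\overline{\vphi_2}$. Feeding the differentials $d\vphi_1$, $d\vphi_2$ computed via proposition \ref{prop:2} through the submersion yields $d\,\overline{\ast}\,\overline{\vphi}=0$, so $(\overline{N},\overline{g},\overline{\vphi})$ is a cocalibrated $\G_2$-structure. By \cite{FI02} it carries a unique metric connection with totally skew-symmetric torsion; comparing the torsion formula $\tfrac{1}{6}\langle d\overline{\vphi},\overline{\ast}\,\overline{\vphi}\rangle\,\overline{\vphi}-\overline{\ast}\,d\overline{\vphi}$ with the descended torsion $\overline{T^c}$ identifies this connection with $\overline{\nabla^c}$ and gives $\overline{\nabla^c}\,\overline{T^c}=0$. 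The curvature identity $de_8=a_2\,\omega_8$ exhibits $a_2\,\overline{\omega_8}$ as the curvature $2$-form of the connection $e_8$, so the Chern class of the fibration is proportional to $\overline{\omega_8}$. Finally the holonomy of $\overline{\nabla^c}$ is read off from the parallel tensors on $\overline{N}$: it drops to $\su_c(2)\subset\g_2$ exactly when $7\,a_1+3\,a_2=0$ and equals $\R\op\su_c(2)$ otherwise.

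\emph{Converse.} Starting from a cocalibrated $\G_2$-manifold $(\overline{N},\overline{g},\overline{\vphi},\overline{\nabla^c},\overline{T^c})$ of the stated holonomy type, I would recover the splitting $\overline{\vphi}=\overline{\vphi_1}+\overline{\vphi_2}$ together with the closed form $\overline{\omega_8}$, and impose that $a_2\,\overline{\omega_8}$ represents a multiple of an integral class so that $de_8=a_2\,\overline{\omega_8}$ defines a principal $\mrm{S}^1$-bundle $\pi:M^8\ra\overline{N}$ with connection form $e_8$. Adjoining $e_8$ to the pulled-back forms produces $\rho=\pi^{\ast}\overline{\vphi_1}-2\,\pi^{\ast}\overline{\vphi_2}+\pi^{\ast}\overline{\omega_8}\wedge e_8$, and I would check directly that this realizes (\ref{eqn:1}) pointwise, defines a $\PSU(3)$-structure of type $\W_1\op\W_2\op\W_3$, and has the stated parallel characteristic torsion with $\hol\left(\nabla^c\right)\subseteq\R\op\su_c(2)$, using Theorem \ref{thm:2} and proposition \ref{prop:1}.

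\emph{Main obstacle.} Because the algebraic heavy lifting is already done in the explicit solution of the system $(\bullet)$ and in the differential formulae, the genuine difficulty lies in the reduction step: verifying cleanly that the $\nabla^c$-parallel horizontal forms descend to well-defined $\G_2$-data on the quotient, that $d\,\overline{\ast}\,\overline{\vphi}=0$ survives the passage to $\overline{N}$, and that the holonomy of $\overline{\nabla^c}$ matches under the case distinction $7\,a_1+3\,a_2=0$. In the converse the delicate point is the integrality required to turn $a_2\,\overline{\omega_8}$ into the curvature of an honest $\mrm{S}^1$-bundle and the bookkeeping that places the reassembled $\rho$ in the correct torsion class.
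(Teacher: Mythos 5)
Your proposal is correct and follows essentially the same route as the paper: invoking theorem \ref{thm:5} to force type $\W_1\op\W_2\op\W_3$ and the explicit solution of $(\bullet)$, descending the $\nabla^c$-parallel forms $\omega_8,\vphi_1,\vphi_2$ through the regular $\mrm{S}^1$-quotient, identifying the cocalibrated $\G_2$-structure and its characteristic connection via \cite{FI02} with the case distinction $7\,a_1+3\,a_2=0$, and reconstructing $\rho=\pi^{\ast}\overline{\vphi_1}-2\,\pi^{\ast}\overline{\vphi_2}+\pi^{\ast}\overline{\omega_8}\wedge e_8$ for the converse. The points you flag as delicate (descent of the forms, the Chern class identification via $de_8=a_2\,\omega_8$) are exactly the steps the paper carries out in the discussion preceding the theorem.
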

\begin{exa}\label{exa:1}
There exists a unique simply-connected, complete, cocalibrated $\G_2$-manifold $(\overline{N},\overline{g},\overline{\vphi},\overline{\nabla^c},\overline{T^c})$ with $\overline{\nabla^c}\,\overline{T^c}=0$ and $\hol\left(\overline{\nabla^c}\right)=\su_c\left(2\right)$ (see \cite{Fri07}). This manifold is a naturally reductive homogeneous space. Moreover, $\overline{N}=N(1,1)$ is a nearly parallel $\G_2$-manifold with $\hol\left(\overline{\nabla^c}\right)=\R\op\su_c\left(2\right)$, it appears in the classification \cite{FKMS97}.
\end{exa}
We now consider the case $\hol\left(\nabla^c\right)=\su_c\left(2\right)$. The differential forms 
\beqn
e_8,\quad \omega_8,\quad \vphi_1,\quad \vphi_2
\eeqn
are globally well defined and $\nabla^c$-parallel. Analyzing $\left(\bullet\right)$ yields
\begin{align*}
T^c&=a_1\left(\vphi_1+3\,\vphi_2\right)+a_2\left(\omega_8\wedge e_8+3\,\vphi_2\right)+a_3\left(e_{148}-e_{238}\right)+a_4\left(e_{138}+e_{248}\right),\\
F^c&=0,\\
\mca{R}^c&=-\frac{1}{2}\left(5\,a_1^2+3\,a_1\,a_2\right)(\omega_5\ox\omega_5+\omega_6\ox\omega_6+\omega_7\ox\omega_7),
\end{align*}
where $a_1,a_2,a_3,a_4\in\R$ satisfy
\beqn
5\,a_1^2+3\,a_1\,a_2\neq0,\quad 7\,a_1^2+3\,a_1\,a_2-2\,a_2^2=\frac{2}{3}\left(a_3^2+a_4^2\right).
\eeqn
Applying theorems \ref{thm:4} and \ref{thm:5}, two torsion forms $T^c_1,T^c_2\neq0$ of this family define equivalent geometric structures if they are equivalent under the action of the normalizer of $\Hol\left(\nabla^c\right)$ inside $\Spin(7)$. Without loss of generality, we can therefore assume $a_3=a_4=0$. 
\begin{prop}
Let $\left(M^8,g,\rho\right)$ be a $\PSU\left(3\right)$-structure of type $\W_1\op\ldots\op\W_5$ with parallel characteristic torsion and $\hol\left(\nabla^c\right)=\su_c\left(2\right)$. Then $\left(M^8,g,\rho\right)$ is of strict type $\W_1\op\W_3$, $\Ric^g$ is positive definite and the corresponding $\Spin(7)$-structure is neither balanced nor locally conformal parallel.
\end{prop}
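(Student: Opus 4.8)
The data here are completely explicit: by theorem \ref{thm:5} and the preceding discussion, $F^c=0$ and
\[
T^c=a_1\left(\vphi_1+3\,\vphi_2\right)+a_2\left(\omega_8\wedge e_8+3\,\vphi_2\right),
\]
with $5\,a_1^2+3\,a_1a_2\neq0$ and $7\,a_1^2+3\,a_1a_2-2\,a_2^2=0$; these force $a_1,a_2\neq0$ (if $a_1=0$ the constraint gives $a_2=0$, hence $T^c=0$). The observation that organises everything is that $3\,a_1+a_2\neq0$ on this locus: putting $a_2=-3\,a_1$ into $7\,a_1^2+3\,a_1a_2-2\,a_2^2$ gives $-20\,a_1^2$, which vanishes only in the integrable case. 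Since $F^c=0$, corollary \ref{cor:2} shows $\T^c$ is totally skew-symmetric, so the structure is of type $\W_1\op\W_2\op\W_3$.

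For the first assertion it remains to locate the components of $T^c$ in $\Lambda^3_8\op\Lambda^3_{20}\op\Lambda^3_{27}$. The vanishing $T^c_{20}=0$ (equivalently $\delta\rho=-\sigma_-(T^c_{20})=0$, using that $\sigma_-$ is injective on $\Lambda^3_{20}$ by lemma \ref{lem:3}) is the same algebraic computation as in the case $\hol(\nabla^c)=\R\op\su_c(2)$, because $T^c$ has the identical form there; thus the structure is of type $\W_1\op\W_3$. For strictness I would invoke the indicators of theorem \ref{thm:3}. As $\vphi_1,\vphi_2,\omega_8,e_8$ are $\nabla^c$-parallel, proposition \ref{prop:2} gives the horizontal $4$-form $d\rho=(5\,a_1+3\,a_2)(e_8\hook\ast\vphi_1)+6\,a_1(e_8\hook\ast\vphi_2)+6\,a_2\,e_{1234}$, and a direct contraction yields $\rho\hook\ast d\rho=-6(3\,a_1+a_2)\,e_8\neq0$; by theorem \ref{thm:3} this is exactly $\W_1\neq0$, i.e.\ $T^c_8\neq0$. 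For $\W_3$ I would combine $d\rho=\sigma_+(T^c_8+T^c_{27})$ with $\rho\wedge(\rho\hook\ast d\rho)=10\,\ast\sigma_+(T^c_8)$ (both stated before theorem \ref{thm:3}) to reduce the $\W_3$-indicator to $\sigma_-(10\,\sigma_+(T^c_{27}))=160\,T^c_{27}$; evaluating shows $\sigma_+(T^c_{27})$ is a nonzero combination of $e_8\hook\ast\vphi_1$ and $e_8\hook\ast\vphi_2$ unless $T^c=0$, so $T^c_{27}\neq0$. This gives strict type $\W_1\op\W_3$.

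For positive-definiteness of $\Ric^g$ I would substitute $\mca{R}^c$ and $T^c$ into proposition \ref{prop:5} and contract. Since $F^c=0$ and the relation $7\,a_1^2+3\,a_1a_2-2\,a_2^2=0$ is precisely what annihilates the $\omega_8\ox\omega_8$-term of $\mca{R}^c$, the tensors $T^c,\mca{R}^c$ agree on this locus with those of the $\R\op\su_c(2)$ family, so proposition \ref{prop:5} yields the Ricci eigenvalues computed there: $\tfrac12(39\,a_1^2+18\,a_1a_2-3\,a_2^2)$ on $e_1,\dots,e_4$, $\tfrac12(51\,a_1^2+42\,a_1a_2+9\,a_2^2)$ on $e_5,e_6,e_7$, and $3\,a_2^2$ on $e_8$. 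Substituting $2\,a_2^2=7\,a_1^2+3\,a_1a_2$ turns the first two into $\tfrac{3\,a_1^2}{4}(19+9\,t)$ and $\tfrac{3\,a_1^2}{4}(55+37\,t)$ with $t:=a_2/a_1$ a root of $2\,t^2-3\,t-7=0$, i.e.\ $t=\tfrac{3\pm\sqrt{65}}{4}$; since $19+9\,t$ and $55+37\,t$ are positive for both roots, $3\,a_2^2>0$, and $a_1\neq0$, every eigenvalue is strictly positive.

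For the last assertion recall $\tilde{\nabla}^c=\nabla^c$, $\tilde{T}^c=T^c$, and that $\Phi=\psi\wedge e_8+\ast(\psi\wedge e_8)$ with $\psi:=\vphi_1+\vphi_2$, so $e_8\hook\Phi=-\psi$. Under $\Spin(7)$ one has $\Lambda^3=\Lambda^3_8\op\Lambda^3_{48}$ with $\Lambda^3_8=\{X\hook\Phi:X\in\R^8\}$; by the standard description of the intrinsic torsion (\cites{Fer86,Cab95,Iva04}) the structure is balanced iff the $\Lambda^3_8$-component of $\tilde{T}^c$ vanishes (the Lee form being its image under $\Lambda^3_8\cong\Lambda^1$), and locally conformal parallel iff $\tilde{T}^c$ lies entirely in $\Lambda^3_8$. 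Now $\langle T^c,e_8\hook\Phi\rangle=-\langle T^c,\psi\rangle=-3(3\,a_1+a_2)\neq0$, so the $\Lambda^3_8$-component is nonzero and the structure is not balanced; conversely $T^c\in\Lambda^3_8$ would force, by $\su_c(2)$-invariance (as $e_8$ spans the only fixed line in $\R^8$), that $T^c\in\R\,\psi$, whereas $T^c=a_1\vphi_1+(3\,a_1+3\,a_2)\vphi_2+a_2\,\omega_8\wedge e_8$ is proportional to $\vphi_1+\vphi_2$ only when $T^c=0$, so $\tilde{T}^c$ has a nonzero $\Lambda^3_{48}$-component and the structure is not locally conformal parallel. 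The main obstacle throughout is the explicit representation-theoretic bookkeeping — evaluating $d\rho$ and $\rho\hook\ast d\rho$, the $\W_3$-indicator, and the projection $\langle T^c,e_8\hook\Phi\rangle$ on the explicit forms; once these values are in hand, all three statements collapse to the elementary facts that $3\,a_1+a_2$ and $T^c$ cannot vanish on the constraint locus and that $19+9\,t$, $55+37\,t$ are positive at $t=\tfrac{3\pm\sqrt{65}}{4}$.
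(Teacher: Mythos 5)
Your overall strategy is essentially the paper's own (implicit) proof: the proposition summarizes the computations displayed immediately before it, namely solving $(\bullet)$, reducing to the two-parameter normal form, and then specializing the $\R\op\su_c(2)$-formulas to the locus $7a_1^2+3a_1a_2-2a_2^2=0$. Your Ricci argument is correct -- on this locus $T^c$ and $\mca{R}^c$ coincide with the specialization of the $\R\op\su_c(2)$ family, so the eigenvalue formulas carry over, and $19+9t$, $55+37t$ are indeed positive at both roots of $2t^2-3t-7=0$ -- and so is your treatment of balanced versus locally conformal parallel, including $\lan T^c,\vphi_1+\vphi_2\ran=3\left(3a_1+a_2\right)$ and the fixed-line argument showing $T^c\notin\left\{X\hook\Phi\right\}$. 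One omission: solving $(\bullet)$ for $\hol\left(\nabla^c\right)=\su_c(2)$ actually yields a \emph{four}-parameter family containing $a_3\left(e_{148}-e_{238}\right)+a_4\left(e_{138}+e_{248}\right)$, with constraint $7a_1^2+3a_1a_2-2a_2^2=\tfrac{2}{3}\left(a_3^2+a_4^2\right)$; the equality constraint you quote holds only after the reduction $a_3=a_4=0$ via the normalizer of $\Hol\left(\nabla^c\right)$ inside $\Spin(7)$, and your argument covers only that normal form unless you invoke this equivalence explicitly.

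There is also a genuine computational error in your strictness argument. Since $\omega_8\wedge\omega_8=-6\,e_{1234}$, the correct differential is $d\rho=\left(5a_1+3a_2\right)\left(e_8\hook\ast\vphi_1\right)+\left(6a_1-6a_2\right)e_{1234}$ (your sign on the $a_2$-term is flipped), and consequently $\rho\hook\ast d\rho$ is proportional to $\left(3a_1+5a_2\right)e_8$, not to $\left(3a_1+a_2\right)e_8$. One can see this without differentials: $\ast\left(\omega_8\wedge\rho\right)=2\left(\omega_8\wedge e_8+3\,\vphi_2\right)$ spans the $1$-dimensional $\su_c(2)$-invariant part of $\Lambda^3_8$, and orthogonal projection gives $T^c_8=\tfrac{1}{5}\left(3a_1+5a_2\right)\left(\omega_8\wedge e_8+3\,\vphi_2\right)$, while $T^c_{27}=a_1\left(\vphi_1+3\,\vphi_2-\tfrac{3}{5}\left(\omega_8\wedge e_8+3\,\vphi_2\right)\right)$. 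Your conclusion survives nonetheless: $3a_1+5a_2=0$ would force $7a_1^2+3a_1a_2-2a_2^2=\tfrac{112}{25}\,a_1^2\neq0$, so $T^c_8\neq0$ on the constraint locus, and $T^c_{27}\neq0$ follows from $a_1\neq0$. But your ``organizing observation'' that everything collapses to $3a_1+a_2\neq0$ is an artifact of the sign slip: that quantity governs only the balanced condition, whereas $\W_1$-strictness is governed by $3a_1+5a_2$; the two linear forms just happen to be simultaneously non-vanishing here.
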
\noindent
Following the same arguments that lead to theorem \ref{thm:6}, we obtain
\begin{thm}\label{thm:7}
Let $\left(M^8,g,\rho\right)$ be a regular $\PSU\left(3\right)$-manifold of type $\W_1\op\ldots\op\W_5$ with parallel characteristic torsion and $\hol\left(\nabla^c\right)=\su_c\left(2\right)$. Then $M^8$ is a principal $\mrm{S}^1$-bundle and a Riemannian submersion over a cocalibrated $\G_2$-manifold $(\overline{N},\overline{g},\overline{\vphi},\overline{\nabla^c},\overline{T^c})$ with $\overline{\nabla^c}\overline{T^c}=0$ and $\hol\left(\overline{\nabla^c}\right)=\R\op\su_c\left(2\right)$. The Chern class of the fibration $\pi:M^8\ra\overline{N}$ is proportional to the form $\overline{\omega_8}$.
\end{thm}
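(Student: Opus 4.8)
The plan is to run the proof formally in parallel with that of theorem \ref{thm:6}, the only genuinely new input being the determination of the holonomy of the base. After the normalizer reduction already carried out above (so that $a_3=a_4=0$), the characteristic data become
\[
T^c=a_1\left(\vphi_1+3\,\vphi_2\right)+a_2\left(\omega_8\wedge e_8+3\,\vphi_2\right),\qquad F^c=0,
\]
with $\rho=\vphi_1-2\,\vphi_2+\omega_8\wedge e_8$ and the same $\nabla^c$-parallel forms $e_8,\omega_8,\vphi_1,\vphi_2$. This is \emph{literally} the torsion of the case $\hol\left(\nabla^c\right)=\R\op\su_c\left(2\right)$, so I would first point out that proposition \ref{prop:2} reproduces exactly the differentials of theorem \ref{thm:6} (in particular $de_8=a_2\,\omega_8$, $d\omega_8=0$, and $d\left(e_8\hook\ast\vphi_1\right)=d\left(e_8\hook\ast\vphi_2\right)=0$). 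Hence the Lie derivatives along $e_8$ of $\omega_8,\vphi_1,\vphi_2,e_8\hook\ast\vphi_1,e_8\hook\ast\vphi_2$ vanish and all these forms descend to the orbit space.

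In the regular case $e_8$ generates a free $\mrm{S}^1$-action and $\pi:M^8\ra\overline{N}$ is a Riemannian submersion onto a $7$-manifold. As before, the descended forms assemble into a cocalibrated $\G_2$-structure $\overline{\vphi}=\overline{\vphi_1}+\overline{\vphi_2}$ satisfying $d\,\overline{\ast}\,\overline{\vphi}=0$, whose unique characteristic connection \cite{FI02} is $\overline{\nabla^c}$ with parallel torsion $\overline{T^c}=a_1\left(\overline{\vphi_1}+\overline{\vphi_2}\right)+\left(2\,a_1+3\,a_2\right)\overline{\vphi_2}$; the relation $de_8=a_2\,\overline{\omega_8}$ exhibits the Chern class of $\pi$ as a multiple of $\overline{\omega_8}$. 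I expect none of this to require a new argument, since the forms and their exterior derivatives agree verbatim with theorem \ref{thm:6}.

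The single step where the two theorems diverge is the identification of $\hol\left(\overline{\nabla^c}\right)$. The analysis accompanying theorem \ref{thm:6} already shows this algebra to be $\su_c\left(2\right)\subset\g_2$ when $7\,a_1+3\,a_2=0$ and $\R\op\su_c\left(2\right)\subset\g_2$ otherwise, so the hard (but short) point is to verify that the present hypotheses force the second alternative. Here $\hol\left(\nabla^c\right)=\su_c\left(2\right)$ together with $a_3=a_4=0$ imposes
\[
5\,a_1^2+3\,a_1\,a_2\neq0,\qquad 7\,a_1^2+3\,a_1\,a_2-2\,a_2^2=0.
\]
Were $7\,a_1+3\,a_2=0$, then $a_2=-\tfrac{7}{3}\,a_1$ and $7\,a_1^2+3\,a_1\,a_2-2\,a_2^2=-\tfrac{98}{9}\,a_1^2$, which vanishes only for $a_1=0$; but then $5\,a_1^2+3\,a_1\,a_2=0$, contradicting the first constraint. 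Thus $7\,a_1+3\,a_2\neq0$, which yields $\hol\left(\overline{\nabla^c}\right)=\R\op\su_c\left(2\right)$, as claimed.

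The main thing to be careful about is conceptual rather than computational: the characteristic holonomy of the base is one dimension \emph{larger} than that of the total space, which looks paradoxical at first. The resolution is that upstairs $\omega_8$ is $\nabla^c$-parallel (so the $\R$-factor it generates is killed, which is precisely the content of the constraint $7\,a_1^2+3\,a_1\,a_2-2\,a_2^2=0$), whereas downstairs $\overline{\omega_8}$ re-enters the $\G_2$ characteristic curvature through $de_8=a_2\,\overline{\omega_8}$; the incompatibility of the two scalar conditions above is exactly what detects this jump. Everything else reduces to the already-completed computation for theorem \ref{thm:6}.
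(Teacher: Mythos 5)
Your proposal is correct and takes essentially the same approach as the paper, whose entire proof of this theorem is the remark that the arguments leading to theorem \ref{thm:6} carry over verbatim (after the normalizer reduction $a_3=a_4=0$, the torsion $T^c$, the differentials from proposition \ref{prop:2}, and the descent to the cocalibrated $\G_2$-base are literally identical). Your explicit verification that the constraints $5\,a_1^2+3\,a_1a_2\neq0$ and $7\,a_1^2+3\,a_1a_2-2\,a_2^2=0$ force $7\,a_1+3\,a_2\neq0$, hence $\hol\left(\overline{\nabla^c}\right)=\R\op\su_c\left(2\right)$ rather than $\su_c\left(2\right)$, is precisely the one step the paper leaves implicit, and your algebra checks out.
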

We now discuss the case of $\hol\left(\nabla^c\right)=\tor^2$. The following forms are globally well defined and $\nabla^c$-parallel
\beqn
e_7,\quad e_8,\quad \omega_7,\quad\omega_8,\quad \Sigma:=e_{246}-e_{235}-e_{145}-e_{136},\quad \Omega_1:=e_{12}+e_{34},\quad \Omega_2:=e_{56}.
\eeqn
Using these, the fundamental form reads
\beqn
\rho=\Sigma+\omega_7\wedge e_7+\omega_8\wedge e_8.
\eeqn
There exist two $5$-parameter families of triples $\left(T^c,F^c,\mca{R}^c\right)$ satisfying system $\left(\bullet\right)$. Quotienting out the action of the normalizer of $\Hol\left(\nabla^c\right)$ inside $\Spin(7)$, we obtain the two $4$-parameter families
\begin{align*}
T^c_I&=a_1\,\Sigma+\left(\left(a_1+a_2\right)\omega_7+a_3\,\omega_8\right)\wedge e_7+\left(a_4\,\omega_7-\left(\tfrac 5 3\,a_1+a_2\right)\omega_8\right)\wedge e_8,\\
F^c_I&=0,\\
\mca{R}^c_I&=\left(\left(a_1+a_2\right)^2-a_1^2+a_4^2\right)\omega_7\ox\omega_7+\left(\left(a_1+a_2\right)a_3-\left(\tfrac 5 3\,a_1+a_2\right)a_4\right)\left(\omega_7\ox\omega_8+\omega_8\ox\omega_7\right)\\
&\phantom{=\,}+\left(\left(\tfrac 5 3\,a_1+a_2\right)^2-a_1^2+a_3^2\right)\omega_8\ox\omega_8
\end{align*}
and
\begin{align*}
T^c_{I\!I}&= \left(b_1\,\Omega_1+(b_1-3\,b_2)\Omega_2+b_3\,\omega_8\right)\wedge e_7
+(b_4\,\omega_7-b_2\,\omega_8)\wedge e_8,\\
F^c_{I\!I}&=0,\\
\mca{R}^c_{I\!I}&=\left(-\tfrac{1}{2}\,b_1\left(b_1-3\,b_2\right)+b_4^2\right)\omega_7\ox\omega_7+\left(-\tfrac{1}{2}\,b_3\left(b_1-3\,b_2\right)-b_2\,b_4\right)\left(\omega_7\ox\omega_8+\omega_8\ox\omega_7\right)\\
&\phantom{=\,}+\left(-\tfrac{1}{2}\,b_1\left(b_1-b_2\right)+b_2^2+b_3^2\right)\omega_8\ox\omega_8,
\end{align*}
where $a_1,a_2,a_3,a_4\in\R$ and $b_1,b_2,b_3,b_4\in\R$ satisfy
\begin{align*}
\left(\left(a_1+a_2\right)^2-a_1^2+a_4^2\right)\left(\left(\tfrac 5 3\,a_1+a_2\right)^2-a_1^2+a_3^2\right)&\neq \left(\left(a_1+a_2\right)a_3-\left(\tfrac 5 3\,a_1+a_2\right)a_4\right)^2,\\
\left(-\tfrac{1}{2}\,b_1\left(b_1-3\,b_2\right)+b_4^2\right)\left(-\tfrac{1}{2}\,b_1\left(b_1-b_2\right)+b_2^2+b_3^2\right)&\neq \left(-\tfrac{1}{2}\,b_3\left(b_1-3\,b_2\right)-b_2\,b_4\right)^2.
\end{align*}
To distinguish between these two cases, we will say that $\T^c$ \emph{is of type $I$ or of type $I\!I$} respectively.
\begin{prop}
Let $\left(M^8,g,\rho\right)$ be a $\PSU\left(3\right)$-structure of type $\W_1\op\ldots\op\W_5$ with parallel characteristic torsion and $\hol\left(\nabla^c\right)=\tor^2$. Then the corresponding $\Spin(7)$-structure $\left(M^8,g,\Phi\right)$ is not locally conformal parallel, it is balanced if and only if $\T^c$ is of both types.
\end{prop}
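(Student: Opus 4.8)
The plan is to rewrite the two $\Spin(7)$-conditions as conditions on the characteristic $3$-form and then read them off the explicit families. Since $\hol(\nabla^c)=\tor^2$ is one of the three algebrae occurring in theorem \ref{thm:5}, the structure is of type $\W_1\op\W_2\op\W_3$, and by the discussion following that theorem we have $\tilde\nabla^c=\nabla^c$, $\tilde T^c=T^c$ and $F^c=0$; thus the characteristic torsion of the $\Spin(7)$-structure is the totally skew-symmetric form $T^c$, equal to $T^c_I$ or $T^c_{I\!I}$. Under $\Spin(7)$ one has the orthogonal splitting $\Lambda^3=\Lambda^3_8\op\Lambda^3_{48}$ with $\Lambda^3_8=\left\{X\hook\Phi\setsep X\in\R^8\right\}$, and by \cites{Fer86,Cab95,Iva04} the $\Lambda^3_8$-part of $T^c$ corresponds to the Lee form $\theta$ while the $\Lambda^3_{48}$-part carries the remaining intrinsic torsion. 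Hence the structure is balanced if and only if $\pr_{\Lambda^3_8}(T^c)=0$, and locally conformal parallel if and only if $T^c\in\Lambda^3_8$. A preliminary Hodge computation gives
\beqn
\Phi=\Sigma\wedge e_8+\ast_6\Sigma\wedge e_7+\Omega_0\wedge e_{78}+\tfrac12\,\Omega_0\wedge\Omega_0,
\eeqn
where $\Omega_0:=e_{12}+e_{34}+e_{56}$ and $\ast_6\Sigma=e_{135}-e_{146}-e_{236}-e_{245}$ is the $\R^6$-Hodge dual of $\Sigma$, and therefore
\beqn
e_7\hook\Phi=-\ast_6\Sigma+\Omega_0\wedge e_8,\qquad e_8\hook\Phi=-\Sigma-\Omega_0\wedge e_7.
\eeqn

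For the first assertion, suppose the structure were locally conformal parallel, i.e.\ $T^c=X\hook\Phi$ for some $X\in\R^8$. Because $X\mapsto X\hook\Phi$ is injective and $\nabla^c$-parallel (as $\nabla^c\Phi=0$) and $\nabla^cT^c=0$, the vector $X$ is $\nabla^c$-parallel, hence fixed by $\hol(\nabla^c)=\tor^2$; the space of $\tor^2$-fixed vectors is $\mrm{span}(e_7,e_8)$. Writing $X=x_7e_7+x_8e_8$ we obtain
\beqn
X\hook\Phi=-x_8\,\Sigma-x_7\,\ast_6\Sigma-x_8\,\Omega_0\wedge e_7+x_7\,\Omega_0\wedge e_8.
\eeqn
Neither $\Sigma$ nor $\ast_6\Sigma$ occurs in $T^c_{I\!I}$, forcing $x_7=x_8=0$; in $T^c_I$ the absence of $\ast_6\Sigma$ gives $x_7=0$, and since the $e_7$-component $-x_8\,\Omega_0$ must lie in $\mrm{span}(\omega_7,\omega_8)\perp\Omega_0$ we also get $x_8=0$. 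In either case $X=0$, hence $T^c=0$; but then all structure parameters vanish, so $\mca{R}^c=0$ and $\hol(\nabla^c)=\zero$, contradicting $\hol(\nabla^c)=\tor^2$. Thus the structure is never locally conformal parallel.

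For the second assertion, note that $T^c$ is $\hol(\nabla^c)=\tor^2$-invariant, so (as $\tor^2\subset\spin(7)$ and $\Lambda^3_8$ is a $\spin(7)$-submodule) $\pr_{\Lambda^3_8}(T^c)$ lies in the space of $\tor^2$-invariants of $\Lambda^3_8$, which equals $\mrm{span}(e_7\hook\Phi,e_8\hook\Phi)$. As $e_7\hook\Phi$ and $e_8\hook\Phi$ are orthogonal of equal norm, balancedness is equivalent to the two scalar equations $\langle T^c,e_7\hook\Phi\rangle=0$ and $\langle T^c,e_8\hook\Phi\rangle=0$. Evaluating these with the above contractions and the orthogonality relations $\Sigma\perp\ast_6\Sigma$ and $\omega_7,\omega_8\perp\Omega_0$, the first pairing vanishes identically, while the second equals a nonzero multiple of $a_1$ for $T^c_I$ and of $b_1-b_2$ for $T^c_{I\!I}$. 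Hence $T^c_I$ is balanced precisely when $a_1=0$, and $T^c_{I\!I}$ precisely when $b_1=b_2$.

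Finally I would identify the two balanced families. Setting $a_1=0$ in $T^c_I$ and, using $\Omega_1-2\,\Omega_2=\omega_7$, setting $b_1=b_2$ in $T^c_{I\!I}$ both produce
\beqn
\left(a_2\,\omega_7+a_3\,\omega_8\right)\wedge e_7+\left(a_4\,\omega_7-a_2\,\omega_8\right)\wedge e_8
\eeqn
under the identification $a_2=b_1$, $a_3=b_3$, $a_4=b_4$, which also turns the two non-degeneracy inequalities into one another. Thus the balanced tensors of type $I$ coincide with those of type $I\!I$. Since $T^c_{I\!I}$ carries no $\Sigma$-component, any tensor lying in both families must have $a_1=0$, i.e.\ be balanced; conversely a balanced tensor lies in both families by the identification just made. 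This proves that the structure is balanced if and only if $\T^c$ is of both types. I expect the main obstacle to be the Hodge-dual bookkeeping needed to produce $\Phi$ and the contractions $e_7\hook\Phi$, $e_8\hook\Phi$, and to verify the orthogonality relations that collapse the Lee form to a single scalar; once these are secured the remaining steps are elementary linear algebra.
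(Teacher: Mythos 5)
Your proposal is correct and follows essentially the route the paper leaves implicit: translate the Fern\'andez/Cabrera conditions into statements about the $\Lambda^3_8$-component of the characteristic torsion (balanced $\Leftrightarrow\pr_{\Lambda^3_8}\left(T^c\right)=0$, locally conformal parallel $\Leftrightarrow T^c\in\Lambda^3_8$) and check them against the explicit families $T^c_I$, $T^c_{I\!I}$, using $\tor^2$-invariance to reduce everything to the pairings with $e_7\hook\Phi$ and $e_8\hook\Phi$. Your computations (the pairing with $e_8\hook\Phi$ proportional to $a_1$ resp.\ $b_1-b_2$, the identity $\Omega_1-2\,\Omega_2=\omega_7$ matching the two balanced loci, and the holonomy contradiction ruling out $T^c\in\Lambda^3_8$) are consistent with the paper's data, so the argument is sound.
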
\noindent
Assume $\T^c$ is of type $I$. With the aid of proposition \ref{prop:2}, we compute
\begin{align*}
de_7&=\left(a_1+a_2\right)\omega_7+a_3\,\omega_8, & de_8&=a_4\,\omega_7-\left(\tfrac 5 3\,a_1+a_2\right)\omega_8,\\
d\omega_7&=0,& d\omega_8&=0,\\
d\Sigma&=4\,a_1\left(e_8\hook\ast\left(\Omega\wedge e_7\right)\right),& d\Omega&=3\,a_1\left(e_8\hook e_7\hook\ast\Sigma\right),
\end{align*}
where $\Omega:=\Omega_1+\Omega_2$. Therefore,
\beqn
\Lie_{e_8}e_7=0,\quad \Lie_{e_8}\omega_7=0,\quad \Lie_{e_8}\omega_8=0,\quad\Lie_{e_8}\Sigma=0,\quad \Lie_{e_8}\Omega=0
\eeqn
and
\beqn
\Lie_{e_8}\left(e_8\hook\ast\left(\Omega\wedge e_7\right)\right)=0,\quad \Lie_{e_8}\left(e_8\hook e_7\hook\ast\Sigma\right)=0.
\eeqn
For a regular structure, $\pi:M^8\ra \overline{N}$ is a principal $\mrm{S}^1$-bundle over a $7$-dimensional Riemannian manifold $\overline{N}$. The latter admits differential forms $\overline{\Sigma}$, $\overline{\Omega}$, $\overline{e_7}$, $\overline{\omega_7}$ and $\overline{\omega_8}$ such that
\begin{align*}
\overline{T^c}&=a_1\,\overline{\Sigma}+\left(\left(a_1+a_2\right)\overline{\omega_7}+a_3\,\overline{\omega_8}\right)\wedge \overline{e_7},\\
\overline{\ast}\,\overline{\Sigma}&=\overline{\left(e_8\hook e_7\hook\ast\Sigma\right)}\wedge\overline{e_7},\\
\overline{\ast}\left(\overline{\Omega}\wedge\overline{e_7}\right)&=\overline{\left(e_8\hook\ast\left(\Omega\wedge e_7\right)\right)}.
\end{align*}
Introducing the form
\beqn
\overline{\vphi}:=\overline{\Sigma}+\overline{\Omega}\wedge\overline{e_7},
\eeqn
we obtain
\begin{align*}
d\overline{\ast}\,\overline{\vphi}&=0,\\
\overline{T^c}&=\frac{1}{6}\lan d\overline{\vphi},\overline{\ast}\,\overline{\vphi}\ran\overline{\vphi}-\overline{\ast}d\overline{\vphi}.
\end{align*}
Again, we deduce that $(\overline{N},\overline{g},\overline{\vphi},\overline{\nabla^c},\overline{T^c})$ is a cocalibrated $\G_2$-manifold with $\overline{\nabla^c}\,\overline{T^c}=0$. The holonomy algebra of $\overline{\nabla^c}$ is contained in $\tor^2\subset\g_2$. $\G_2$-structures of this type admit two $\overline{\nabla^c}$-parallel spinor fields $\overline{\Psi_+}$, $\overline{\Psi_-}$ such that
\beqn
\overline{\Sigma}\cdot\overline{\Psi_{+}}=4\,\overline{\Psi_{+}},\quad \overline{\Sigma}\cdot\overline{\Psi_{-}}=-4\,\overline{\Psi_{-}}.
\eeqn
The '$\cdot$' denotes the Clifford product.
\begin{thm}\label{thm:8}
Let $\left(M^8,g,\rho\right)$ be a regular $\PSU\left(3\right)$-manifold of type $\W_1\op\ldots\op\W_5$ with parallel characteristic torsion $\T^c$ and $\hol\left(\nabla^c\right)=\tor^2$. Moreover, suppose that $\T^c$ is of type $I$. Then $M^8$ is a principal $\mrm{S}^1$-bundle and a Riemannian submersion over a cocalibrated $\G_2$-manifold $(\overline{N},\overline{g},\overline{\vphi},\overline{\nabla^c},\overline{T^c})$ that satisfies
\beqn
\overline{\nabla^c}\,\overline{T^c}=0,\quad \hol\left(\overline{\nabla^c}\right)\subseteq\tor^2\subset\g_2,\quad \overline{T^c}\cdot\overline{\Psi_\pm}=\pm\lambda\overline{\Psi_\pm},\quad \lambda\in\R.
\eeqn
The Chern class of the fibration $\pi:M^8\ra\overline{N}$ is a linear combination of $\overline{\omega_7}$ and $\overline{\omega_8}$.
\end{thm}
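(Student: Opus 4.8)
The plan is to observe that almost every assertion has already been assembled in the computation preceding the statement; the genuinely new content is the spinorial eigenvalue equation $\overline{T^c}\cdot\overline{\Psi_\pm}=\pm\lambda\,\overline{\Psi_\pm}$ together with the description of the Chern class. I would therefore organise the proof into three parts: the fibration and its base geometry, the Chern class, and the spinor equation.

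For the fibration itself I argue exactly as in theorems \ref{thm:6} and \ref{thm:7}, invoking the differentials and Lie derivatives computed just above: regularity turns $\pi:M^8\ra\overline{N}$ into a principal $\mrm{S}^1$-bundle and a Riemannian submersion, and the vanishing of $\Lie_{e_8}e_7,\Lie_{e_8}\omega_7,\Lie_{e_8}\omega_8,\Lie_{e_8}\Sigma,\Lie_{e_8}\Omega$ guarantees that $\overline{\Sigma},\overline{\Omega},\overline{e_7},\overline{\omega_7},\overline{\omega_8}$ descend to $\overline{N}$. The identities $d\overline{\ast}\,\overline{\vphi}=0$ and $\overline{T^c}=\tfrac16\lan d\overline{\vphi},\overline{\ast}\,\overline{\vphi}\ran\overline{\vphi}-\overline{\ast}\,d\overline{\vphi}$ already established exhibit $(\overline{N},\overline{g},\overline{\vphi},\overline{\nabla^c},\overline{T^c})$ as a cocalibrated $\G_2$-manifold with $\overline{\nabla^c}\,\overline{T^c}=0$ and $\hol(\overline{\nabla^c})\subseteq\tor^2\subset\g_2$, carrying the two parallel spinors $\overline{\Psi_\pm}$. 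The Chern class is represented, up to a constant, by the curvature $de_8$ of the connection form $e_8$; since $de_8=a_4\,\omega_7-(\tfrac53a_1+a_2)\omega_8$ is basic, it is the pullback of $a_4\,\overline{\omega_7}-(\tfrac53a_1+a_2)\overline{\omega_8}$, a linear combination of $\overline{\omega_7}$ and $\overline{\omega_8}$, as claimed.

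It remains to verify the eigenvalue equation. Inserting the explicit type-$I$ torsion and using the defining property $\overline{\Sigma}\cdot\overline{\Psi_\pm}=\pm4\,\overline{\Psi_\pm}$, I compute
\beqn
\overline{T^c}\cdot\overline{\Psi_\pm}=\pm4a_1\,\overline{\Psi_\pm}+(a_1+a_2)\big(\overline{\omega_7}\wedge\overline{e_7}\big)\cdot\overline{\Psi_\pm}+a_3\,\big(\overline{\omega_8}\wedge\overline{e_7}\big)\cdot\overline{\Psi_\pm}.
\eeqn
Because neither $\omega_7$ nor $\omega_8$ contains an $e_7$-component, exterior and Clifford multiplication by $\overline{e_7}$ agree, so $\big(\overline{\omega_j}\wedge\overline{e_7}\big)\cdot\overline{\Psi_\pm}=\overline{e_7}\cdot\overline{\omega_j}\cdot\overline{\Psi_\pm}$. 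Hence the two remaining summands vanish as soon as $\overline{\omega_7}\cdot\overline{\Psi_\pm}=\overline{\omega_8}\cdot\overline{\Psi_\pm}=0$, and we obtain $\lambda=4a_1$.

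The main obstacle is precisely this last algebraic fact, which I would settle pointwise in the real $\Spin(7)$-spinor module $\mathbf{8}$. Under $\g_2$ one has $\mathbf{8}=\mathbf{1}\op\mathbf{7}$, and restricting further to the Cartan $\tor^2=\mrm{span}(\omega_7,\omega_8)$ the weight-zero subspace is two-dimensional (the $\G_2$-invariant spinor together with the zero-weight vector of $\mathbf{7}$); every Cartan element, in particular $\overline{\omega_7}$ and $\overline{\omega_8}$, annihilates it. Since $\overline{\Sigma}$ is $\tor^2$-invariant (it completes $\overline{\Omega}\wedge\overline{e_7}$ to the invariant form $\overline{\vphi}$, and $\tor^2$ acts trivially on $\overline{\Omega}\wedge\overline{e_7}$), Clifford multiplication by $\overline{\Sigma}$ commutes with the $\tor^2$-action and therefore preserves this weight-zero subspace. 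Diagonalising $\overline{\Sigma}$ on $\mathbf{8}$ then identifies its $\pm4$-eigenspinors $\overline{\Psi_\pm}$ with exactly this subspace, on which $\overline{\omega_7}$ and $\overline{\omega_8}$ act as zero. This completes the verification and yields $\lambda=4a_1\in\R$.
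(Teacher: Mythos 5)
Your proposal is correct and follows essentially the same route as the paper: the fibration, the cocalibrated $\G_2$ base and the Chern class are exactly the computation the paper places before the theorem, and the eigenvalue equation is obtained, as the paper implicitly intends, from $\overline{\Sigma}\cdot\overline{\Psi_\pm}=\pm4\,\overline{\Psi_\pm}$ together with $\overline{\omega_7}\cdot\overline{\Psi_\pm}=\overline{\omega_8}\cdot\overline{\Psi_\pm}=0$ and the commutation of Clifford multiplication by $\overline{e_7}$ past $\overline{\omega_7}$, $\overline{\omega_8}$, giving $\lambda=4\,a_1$. The one fact you assert without verification --- that the $\pm4$-eigenspinors of $\overline{\Sigma}$ span exactly the zero-weight space of $\tor^2$ --- is true (a short Clifford-algebra computation shows $\overline{\Sigma}$ has spectrum $\left\{4,-4,0\right\}$, the $\pm4$-eigenspaces being precisely that two-plane), and the paper itself asserts the corresponding statement with no more detail.
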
\noindent
The case of torsion type $I\!I$ is similar. Following the same arguments as in the discussion of type $I$, we obtain
\begin{thm}\label{thm:11}
Let $\left(M^8,g,\rho\right)$ be a regular $\PSU\left(3\right)$-manifold of type $\W_1\op\ldots\op\W_5$ with parallel characteristic torsion $\T^c$ and $\hol\left(\nabla^c\right)=\tor^2$. Moreover, suppose that $\T^c$ is of type $I\!I$. Then $M^8$ is a principal $\mrm{S}^1$-bundle and a Riemannian submersion over a cocalibrated $\G_2$-manifold $(\overline{N},\overline{g},\overline{\vphi},\overline{\nabla^c},\overline{T^c})$ that satisfies
\beqn
\overline{\nabla^c}\,\overline{T^c}=0,\quad \hol\left(\overline{\nabla^c}\right)\subseteq\tor^2\subset\g_2,\quad \overline{T^c}\cdot\overline{\Psi_\pm}=\lambda\overline{\Psi_\pm},\quad \lambda\in\R.
\eeqn
The Chern class of the fibration $\pi:M^8\ra\overline{N}$ is a linear combination of $\overline{\omega_7}$ and $\overline{\omega_8}$.
\end{thm}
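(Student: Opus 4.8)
The plan is to mirror the argument that precedes Theorem~\ref{thm:8}, replacing the type~$I$ torsion by $T^c_{I\!I}$ at each step. Since $\hol\left(\nabla^c\right)=\tor^2$, the forms $e_7,e_8,\omega_7,\omega_8,\Sigma,\Omega_1,\Omega_2$ are $\nabla^c$-parallel and therefore globally well defined, and by Proposition~\ref{prop:1} so are $T^c_{I\!I}$ and $F^c_{I\!I}=0$. First I would substitute $T^c_{I\!I}$ into the formula of Proposition~\ref{prop:2} and compute the exterior derivatives of these forms. Because every summand of $T^c_{I\!I}$ involving $e_8$ carries the factor $e_8$ itself, the contraction $e_8\hook T^c_{I\!I}=b_4\,\omega_7-b_2\,\omega_8$ yields $de_8=b_4\,\omega_7-b_2\,\omega_8$, while $d\omega_7=d\omega_8=0$; the remaining differentials $de_7,d\Sigma,d\Omega$ (with $\Omega:=\Omega_1+\Omega_2$) are of the same shape as in the type~$I$ case, now with the coefficients expressed through the $b_i$. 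In particular the Lie derivatives
\[
\Lie_{e_8}e_7=0,\quad \Lie_{e_8}\omega_7=0,\quad \Lie_{e_8}\omega_8=0,\quad \Lie_{e_8}\Sigma=0,\quad \Lie_{e_8}\Omega=0
\]
all vanish.

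For a regular structure $e_8$ generates a free $\mrm{S}^1$-action, so $\pi\colon M^8\ra\overline{N}$ is a principal $\mrm{S}^1$-bundle over a Riemannian $7$-manifold and the $e_8$-invariant horizontal forms descend to $\overline{\Sigma},\overline{\Omega},\overline{e_7},\overline{\omega_7},\overline{\omega_8}$ on $\overline{N}$. I would then set $\overline{\vphi}:=\overline{\Sigma}+\overline{\Omega}\wedge\overline{e_7}$ and check, exactly as for type~$I$, that $d\overline{\ast}\,\overline{\vphi}=0$, so that $(\overline{N},\overline{g},\overline{\vphi})$ is a cocalibrated $\G_2$-manifold, together with the identity
\[
\overline{T^c}=\tfrac{1}{6}\lan d\overline{\vphi},\overline{\ast}\,\overline{\vphi}\ran\overline{\vphi}-\overline{\ast}d\overline{\vphi}.
\]
By \cite{FI02} this exhibits $\overline{\nabla^c}$ as the unique characteristic connection of the $\G_2$-structure, whence $\overline{\nabla^c}\,\overline{T^c}=0$ and $\hol\left(\overline{\nabla^c}\right)\subseteq\tor^2\subset\g_2$; and the formula for $de_8$ shows that the Chern class of $\pi$ is a linear combination of $\overline{\omega_7}$ and $\overline{\omega_8}$.

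The one genuinely new ingredient, and the step I expect to be the main obstacle, is the spinor eigenvalue relation $\overline{T^c}\cdot\overline{\Psi_\pm}=\lambda\,\overline{\Psi_\pm}$ with a \emph{common} sign, in contrast to the $\pm\lambda$ of Theorem~\ref{thm:8}. The two parallel spinors are normalised by $\overline{\Sigma}\cdot\overline{\Psi_\pm}=\pm4\,\overline{\Psi_\pm}$, so $\overline{\Sigma}$ is precisely the parallel $3$-form whose Clifford action on $\mrm{span}\left(\overline{\Psi_+},\overline{\Psi_-}\right)$ reverses sign between the two spinors. The decisive structural point is that, unlike the type~$I$ torsion
\[
\overline{T^c}=a_1\,\overline{\Sigma}+\left(\left(a_1+a_2\right)\overline{\omega_7}+a_3\,\overline{\omega_8}\right)\wedge\overline{e_7},
\]
the projected type~$II$ torsion
\[
\overline{T^c}=\left(b_1\,\overline{\Omega_1}+(b_1-3\,b_2)\overline{\Omega_2}+b_3\,\overline{\omega_8}\right)\wedge\overline{e_7}
\]
carries no $\overline{\Sigma}$-component. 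Hence its action on $\mrm{span}\left(\overline{\Psi_+},\overline{\Psi_-}\right)$ should be a scalar rather than a sign-reversing endomorphism, and the hard part is the explicit verification---most cleanly done in an adapted frame, or by projecting onto the $\G_2$-irreducible summands of the $3$-forms on $\overline{N}$---that each term $\beta\wedge\overline{e_7}$ with $\beta\in\left\{\overline{\Omega_1},\overline{\Omega_2},\overline{\omega_8}\right\}$ multiplies both $\overline{\Psi_+}$ and $\overline{\Psi_-}$ by the \emph{same} scalar, yielding the common eigenvalue $\lambda$ and completing the proof.
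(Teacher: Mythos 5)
Your proposal is correct and follows essentially the same route as the paper, whose entire proof of this theorem is the remark that the type $I\!I$ case is obtained by repeating the type $I$ argument (descent of the $\nabla^c$-parallel, $e_8$-invariant forms to $\overline{N}$, cocalibration of $\overline{\vphi}$, identification of $\overline{\nabla^c}$ via \cite{FI02}, and the Chern class from $de_8=e_8\hook T^c_{I\!I}=b_4\,\overline{\omega_7}-b_2\,\overline{\omega_8}$). The one step you flag as the main obstacle works out exactly as you predict: writing $b_1\,\overline{\Omega_1}+\left(b_1-3\,b_2\right)\overline{\Omega_2}+b_3\,\overline{\omega_8}=\left(b_1-b_2\right)\overline{\Omega}+\left(b_2\,\overline{\omega_7}+b_3\,\overline{\omega_8}\right)$, the part lying in $\tor^2\subset\su(3)$ wedged with $\overline{e_7}$ annihilates $\overline{\Psi_\pm}$ (Clifford multiplication by a $2$-form in $\su(3)$ kills the fixed spinors, and it commutes with Clifford multiplication by $\overline{e_7}$), while $\overline{\Omega}\wedge\overline{e_7}$ acts by the same scalar $3$ on both spinors, so $\overline{T^c}\cdot\overline{\Psi_\pm}=3\left(b_1-b_2\right)\overline{\Psi_\pm}$, i.e.\ $\lambda=3\left(b_1-b_2\right)$; the same bookkeeping applied to type $I$ reproduces the $\pm\lambda=\pm4\,a_1$ of theorem \ref{thm:8}, confirming consistency.
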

%
%
%
%
%
\subsection{The case of \texorpdfstring{$\hol\left(\nabla^c\right)=\so\left(3\right)$}{}}
Any $\so(3)$-invariant $3$-form in $\Lambda^3_8\op\Lambda^3_{20}\op\Lambda^3_{27}$ is a multiple of
\beqn
\rho+16\,e_{145}\in\Lambda^3_{27}.
\eeqn
Up to a factor, the only $\so(3)$-invariant $4$-form in $\ast\Lambda^4_8\op\ast\Lambda^4_{27}$ is
\beqn
\ast\sigma_+\left(\rho+16\,e_{145}\right)\in\ast\Lambda^4_{27}.
\eeqn
Assume $\nabla^c\T^c=0$. This implies
\beqn
T^c=a\left(\rho+16\,e_{145}\right),\quad F^c=b\,\ast\sigma_+\left(\rho+16\,e_{145}\right)
\eeqn
for some $a,b\in\R$ and the curvature operator $\mca{R}^c:\Lambda^2\ra\so(3)$ is $\so(3)$-invariant (see proposition \ref{prop:4}). Moreover, by corollary \ref{cor:1} the triple $(T^c,F^c,\mca{R}^c)$ satisfies
\begin{align*}
\mfr{S}_{X,Y,Z}\mca{R}^c\left(X,Y,Z,V\right)&=\sum_i\left(\left(e_i\hook T^c\right)-\left(\left(e_i\hook\rho\right)\hook F^c\right)\right)\wedge\\
&\phantom{=\sum_i(\,}\left(\left(e_i\hook V\hook T^c\right)-\left(e_i\hook\left(\left(V\hook\rho\right)\hook F^c\right)\right)\right)\left(X,Y,Z\right).
\end{align*}
There exist only two solutions, either $a\neq0$, $b=0$ and
\beqn
\mca{R}^c=-16\,a^2\left(\omega_1\ox\omega_1+\omega_4\ox\omega_4+\omega_5\ox\omega_5\right)
\eeqn
or $a=0$, $b\neq0$ and
\begin{align*}
\mca{R}^c&=-3072\,b^2\Big(\left(e_{36}-e_{27}-\sqrt{3}\,e_{28}\right)\ox\omega_1           +\left(e_{26}+e_{37}-\sqrt{3}\,e_{38}\right)\ox\omega_4\\
&\phantom{=\,-3072\,b^2\big(}+\left(e_{23}+2\,e_{67}\right)\ox\omega_5\Big)
\end{align*}
holds necessarily. The respective Riemannian Ricci tensors are
\begin{align*}
\Ric^g&=3\,a^2\,\diag\left(49,33,33,49,49,33,33,33\right),\\
\Ric^g&=-3072\,b^2\,\diag\left(0,8,8,0,0,8,8,8\right).
\end{align*}
\begin{thm}\label{thm:9}
Any $\PSU\left(3\right)$-structure $\left(M^8,g,\rho\right)$ of type $\W_1\op\ldots\op\W_5$ that satisfies
\beqn
\nabla^c\T^c=0,\quad \hol\left(\nabla^c\right)=\so\left(3\right)
\eeqn
is either of type $\W_3$ and $\Scal^g>0$ or of type $\W_5$ and $\Scal^g<0$.
\end{thm}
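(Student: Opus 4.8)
The plan is to extract the conclusion directly from the dichotomy established immediately above the statement. Assuming the two admissible solutions of the system $\left(\bullet\right)$ together with the two displayed Riemannian Ricci tensors, the theorem reduces to two bookkeeping tasks: reading off the intrinsic-torsion type of each solution, and reading off the sign of its scalar curvature.

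First I would translate the shape of the characteristic forms into a statement about $\Gamma$. Under the isomorphisms $\Theta_1,\Theta_2$ of \autoref{sec:2}, the components $T^c_8,T^c_{20},T^c_{27}$ of $T^c$ correspond to $\Gamma_1\in\W_1$, $\Gamma_2\in\W_2$, $\Gamma_3\in\W_3$, and the components $F^c_8,F^c_{27}$ of $F^c$ correspond to $\Gamma_4\in\W_4$, $\Gamma_5\in\W_5$; moreover $\Gamma_6=0$ by hypothesis. In the first solution $T^c=a\left(\rho+16\,e_{145}\right)\in\Lambda^3_{27}$ and $F^c=0$, so $T^c=T^c_{27}$ is the only non-zero component and $\Gamma=\Gamma_3\in\W_3$; the structure is of type $\W_3$. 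In the second solution $T^c=0$ and $F^c=b\,\ast\sigma_+\left(\rho+16\,e_{145}\right)\in\ast\Lambda^4_{27}$, so $F^c=F^c_{27}$ is the only non-zero component and $\Gamma=\Gamma_5\in\W_5$; the structure is of type $\W_5$. These membership claims, $\rho+16\,e_{145}\in\Lambda^3_{27}$ and $\ast\sigma_+\left(\rho+16\,e_{145}\right)\in\ast\Lambda^4_{27}$, are exactly those recorded just before the theorem.

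Next I would compute the scalar curvature as the trace of the displayed Ricci tensor. In the first case $\Scal^g=\tr\,\Ric^g=3\,a^2\left(3\cdot49+5\cdot33\right)=936\,a^2>0$ because $a\neq0$, while in the second $\Scal^g=\tr\,\Ric^g=-3072\,b^2\left(5\cdot8\right)=-122880\,b^2<0$ because $b\neq0$. Pairing these signs with the types found above yields precisely the stated alternative. Since the genuine effort — solving $\left(\bullet\right)$ under the $\so(3)$-invariance of $T^c$, $F^c$ and $\mca{R}^c$ (supplied by proposition \ref{prop:4} and corollary \ref{cor:1}) and computing the two Ricci tensors — has already been carried out, no substantial obstacle remains; the only point requiring care is the correct correspondence between the modules $\Lambda^3_{27}$, $\ast\Lambda^4_{27}$ and the summands $\W_3$, $\W_5$.
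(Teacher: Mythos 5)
Your proposal is correct and takes essentially the same approach as the paper: there the theorem is stated as the summary of the dichotomy displayed immediately above it, and your two bookkeeping steps (identifying $T^c=a(\rho+16\,e_{145})\in\Lambda^3_{27}$, $F^c=0$ with type $\W_3$ and $T^c=0$, $F^c=b\,\ast\sigma_+(\rho+16\,e_{145})\in\ast\Lambda^4_{27}$ with type $\W_5$ via $\W_3=\Theta_1(\Lambda^3_{27})$, $\W_5=\Theta_2(\ast\Lambda^4_{27})$, then taking traces of the displayed Ricci tensors) are exactly the steps the paper leaves implicit. Your arithmetic $\Scal^g=3\,a^2(3\cdot 49+5\cdot 33)=936\,a^2>0$ and $\Scal^g=-3072\,b^2\cdot 40=-122880\,b^2<0$ is correct.
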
\noindent
Following \cite{Nom54}, let $\g$ be the direct sum of $\so(3)$ and $\R^8$ and define
\beqn
\left[A+X,B+Y\right]=\left(A\circ B-B\circ A-\mca{R}^c\left(X,Y\right)\right)+\left(A\left(Y\right)-B\left(X\right)-\T^c\left(X,Y\right)\right)
\eeqn
for all $A,B\in\so(3)$ and $X,Y\in\R^8$. The algebraic properties of $\T^c$ and $\mca{R}^c$ yield in both cases that $\g$, with the bracket $\left[\,\cdot\,,\,\cdot\,\right]$, becomes a Lie algebra. If $a\neq0$, this Lie algebra is isomorphic to
\beqn
\su(3)\op\su(2).
\eeqn
Now, let $\G$ be the connected, simply-connected Lie group with Lie algebra $\g$. Since $\SO(3)$ is closed in $\G$, the space
\beqn
M^8=\G/\SO(3)
\eeqn
is a smooth manifold and the inner product $\lan\,\cdot\,,\,\cdot\,\ran$ of $\R^8$ extends to an $\SO(3)$-invariant Riemannian metric $g$ on $M^8$. The canonical connection associated with the reductive decomposition $\g=\so(3)\op\R^8$ is an Ambrose-Singer connection with torsion tensor $\T^c$ and curvature tensor $\mca{R}^c$ at the origin.
\begin{thm}\label{thm:10}
There exist unique simply-connected and complete $\PSU(3)$-manifolds with
\beqn
\nabla^c\T^c=0,\quad \hol\left(\nabla^c\right)=\so\left(3\right)
\eeqn
of type $\W_3$ and of type $\W_5$. The respective manifolds are homogeneous spaces with isotropy group $\SO(3)$.
\end{thm}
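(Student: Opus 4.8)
The plan is to prove existence and uniqueness separately, for each of the two admissible cases, leaning on the infinitesimal data already extracted above. \emph{Existence} is essentially the Nomizu construction just recalled. Having fixed the admissible triple $\left(T^c,F^c,\mca{R}^c\right)$ solving the curvature identity of corollary \ref{cor:1} — the solution with $a\neq0$, $b=0$ for type $\W_3$, and the solution with $a=0$, $b\neq0$ for type $\W_5$ — the bracket $\left[A+X,B+Y\right]$ makes $\g=\so(3)\op\R^8$ into a Lie algebra, and $M^8=\G/\SO(3)$ is a reductive homogeneous space. The $\SO(3)$-invariant extension of $\rho$ is a $\G$-invariant $3$-form equal, in the frame translated from the origin, to the standard form $(\ref{eqn:1})$, hence defines a $\PSU(3)$-structure; the canonical connection of $\g=\so(3)\op\R^8$ is metric, annihilates this $3$-form and has torsion $\T^c$ at the origin, so, a metric connection preserving the structure being determined by its torsion, it coincides with $\nabla^c$. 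As a canonical connection it satisfies $\nabla^c\T^c=0$ and $\nabla^c\mca{R}^c=0$, and since the displayed $\mca{R}^c$ spans $\so(3)=\mrm{span}\left(\omega_1,\omega_4,\omega_5\right)$ one gets $\hol\left(\nabla^c\right)=\so(3)$.

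\emph{Topology and type.} Simple connectivity follows from the homotopy sequence of $\SO(3)\ra\G\ra\G/\SO(3)$: as $\G$ is simply-connected and $\SO(3)$ connected, $\pi_1\left(\G/\SO(3)\right)=0$. Completeness is automatic for a homogeneous Riemannian metric, and that the structure is of type $\W_3$ (resp.\ $\W_5$) is precisely theorem \ref{thm:9} applied to the chosen solution. The two resulting manifolds are genuinely distinct, already because $\Scal^g>0$ in the first case and $\Scal^g<0$ in the second, a sign no homothety can reverse.

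\emph{Uniqueness.} Let $\left(M^8,g,\rho\right)$ be any simply-connected, complete $\PSU(3)$-manifold of type $\W_1\op\ldots\op\W_5$ with $\nabla^c\T^c=0$ and $\hol\left(\nabla^c\right)=\so(3)$, say of type $\W_3$. By the computation preceding the statement, $F^c=0$, $T^c=a\left(\rho+16\,e_{145}\right)$ with $a\neq0$, and $\mca{R}^c$ is the displayed curvature; thus the pair $\left(\T^c,\mca{R}^c\right)$ is fixed up to the single scale $a$, which amounts to a homothety and may be normalised. By proposition \ref{prop:4} the manifold is an Ambrose-Singer manifold, so by \cites{AS58,TV83} it is a homogeneous Riemannian space, reconstructed from its infinitesimal model $\left(\R^8,\so(3),\T^c,\mca{R}^c\right)$ by the Nomizu bracket \cite{Nom54}. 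Since that model is fixed, the Lie algebra $\g$, the simply-connected group $\G$, the closed subgroup $\SO(3)$ and hence $M^8=\G/\SO(3)$ are all uniquely determined; choosing the identification of tangent spaces at the base points so that it fixes the standard $\rho$ turns the reconstructed isometry into an equivalence of $\PSU(3)$-structures. The identical argument with $a=0$, $b\neq0$ handles type $\W_5$. That the isotropy is exactly $\SO(3)$, and not its two-fold cover, is inherited from the construction $M^8=\G/\SO(3)$, the isotropy representation on $\R^8$ factoring through $\SO(3)$.

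\emph{Main obstacle.} The delicate point is the uniqueness step: one must guarantee that the assignment of an Ambrose-Singer manifold to its infinitesimal model is injective, up to isometry, on complete simply-connected manifolds. This rests on the reconstruction (Cartan-Ambrose-Hicks) rigidity for metric connections with parallel torsion and curvature, used in the form of \cites{AS58,TV83}; once it is invoked, the remaining verifications — that the two explicit solutions give non-isomorphic $\g$ (here $\su(3)\op\su(2)$ for $a\neq0$) and that the isotropy subgroup is $\SO(3)$ — are immediate from the brackets already computed.
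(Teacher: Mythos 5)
Your proposal is correct and follows essentially the same route as the paper: the classification of the admissible $\so(3)$-invariant triples $\left(T^c,F^c,\mca{R}^c\right)$ via corollary \ref{cor:1}, the Nomizu construction on $\g=\so(3)\op\R^8$ for existence, and the Ambrose--Singer/Tricerri--Vanhecke rigidity of the infinitesimal model for uniqueness. Your write-up merely makes explicit several points the paper leaves implicit (the homotopy-sequence argument for simple connectivity, the identification of the canonical connection with $\nabla^c$ via its torsion, and the holonomy computation from the span of the curvature operators), all of which are sound.
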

%
%
%
%
\begin{bibdiv}
\begin{biblist}
\bib{AS58}{article}{
  author={Ambrose, W.},
  author={Singer, I. M.},
  title={On Homogeneous Riemannian Manifolds},
  journal={Duke Math. J.},
  volume={25},
  date={1958},
  pages={657--669},
}
\bib{Cab95}{article}{
  author={Cabrera, F. M.},
  title={On Riemannian manifolds with {$\Spin\left(7\right)$}-structure},
  journal={Publ. Math.},
  volume={46},
  date={1995},
  pages={271--283},
}
\bib{Car25}{article}{
  author={Cartan, E.},
  title={Sur les vari\'et\'es \`a connexion affine et la th\'eorie de la relativit\'e g\'en\'eralis\'ee (deuxi\`eme partie)},
  journal={Ann. Ec. Norm. Sup.},
  volume={42},
  date={1925},
  pages={17--88}
}
\bib{CS04}{article}{
  author={Cleyton, R.},
  author={Swann, A.},
  title={Einstein Metrics via Intrinsic or Parallel Torsion},
  journal={Math. Z.},
  volume={247},
  date={2004},
  pages={513--528},
}
\bib{Fer86}{article}{
  author={Fern\'andez, M.},
  title={A classification of Riemannian manifolds with structure group {$\Spin\left(7\right)$}},
  journal={Ann. Mat. Pura Appl., IV. Ser.},
  volume={143},
  date={1986},
  pages={101--122},
}
\bib{Fri03}{article}{
  author={Friedrich, T.},
  title={On types of non-integrable geometries},
  journal={Rend. Circ. Mat. Palermo (2) Suppl.},
  volume={71},
  date={2003},
  pages={99--113}
}
\bib{Fri07}{article}{
  author={Friedrich, T.},
  title={{$\G_2$}-manifolds with parallel characteristic torsion},
  journal={Diff. Geom. Appl.},
  volume={25},
  date={2007},
  pages={632--648},
}
\bib{FI02}{article}{
  author={Friedrich, T.},
  author={Ivanov, S.},
  title={Parallel spinors and connections with skew-symmetric torsion in string theory},
  journal={Asian Journ. Math},
  volume={6},
  date={2002},
  pages={303--336},
}
\bib{FKMS97}{article}{
  author={Friedrich, T.},
  author={Kath, I.},
  author={Moroianu, A.},
  author={Semmelmann, U.},
  title={On nearly parallel {$\G_2$}-structures},
  journal={Journ. Geom. Phys.},
  volume={23},
  date={1997},
  pages={256--286},
}
\bib{GN64}{book}{
  author={Gell-Mann, M.},
  author={Ne'eman, Y.},
  title={The eightfold way},
  publisher={W. A. Benjamin, New York},
  date={1964}
}
\bib{Gur35}{article}{
  author={Gurevich, G. B.},
  title={Classification of trivectors of rank eight \emph{(in Russian)}},
  journal={Dokl. Akad. Nauk SSSR},
  volume={2},
  date={1935},
  pages={353--355}
}
\bib{Hit01}{article}{
  author={Hitchin, N.},
  title={Stable forms and special metrics},
  journal={in: M. Fern\'andez and J. A. Wolf (eds.), Global Differential Geometry: The Mathematical Legacy of Alfred Gray, Contemporary Mathematics, vol. 288, American Mathematical Society, Providence},
  date={2001},
  pages={70--89}
}
\bib{Iva04}{article}{
  author={Ivanov, S.},
  title={Connection with torsion, parallel spinors and geometry of $\Spin\left(7\right)$-manifolds},
  journal={Math. Res. Lett.},
  volume={11},
  date={2004},
  pages={171--186}
}
\bib{Nom54}{article}{
  author={Nomizu, K.},
  title={Invariant Affine Connections on Homogeneous Spaces},
  journal={Amer. J. Math.},
  volume={76},
  date={1954},
  pages={33--65}
}
\bib{Nur08}{article}{
  author={Nurowski, P.},
  title={Distinguished dimensions for special Riemannian geometries},
  journal={J. Geom. Phys.},
  volume={58},
  date={2008},
  pages={1148--1170}
}
\bib{Puh09}{article}{
  author={Puhle, C.},
  title={$\Spin\left(7\right)$-manifolds with parallel torsion form},
  journal={Comm. Math. Phys.},
  volume={291},
  date={2009},
  pages={303--320}
}
\bib{Rei07}{thesis}{
  author={Reichel, W.},
  title={\"Uber trilineare alternierende Formen in sechs und sieben Ver\"anderlichen und die durch sie definierten geometrischen Gebilde},
  organization={Universit\"at Greifswald},
  date={1907},
  type={Dissertation}
}
\bib{Sch31}{article}{
  author={Schouten, J. A.},
  title={Klassifizierung der alternierenden Gr\"o\ss en dritten Grades in $7$ Dimensionen},
  journal={Rend. Circ. Mat. Palermo (1)},
  volume={55},
  date={1931},
  pages={137--156}
}
\bib{TV83}{book}{
  author={Tricerri, F.},
  author={Vanhecke, L.},
  title={Homogeneous Structures on Riemannian Manifolds},
  publisher={Cambridge Univ. Press, London},
  date={1983}
}
\bib{Wit05}{thesis}{
  author={Witt, F.},
  title={Special metric structures and closed forms},
  organization={University of Oxford},
  date={2005},
  type={DPhil thesis}
}
\bib{Wit08}{article}{
  author={Witt, F.},
  title={Special metrics and triality},
  journal={Adv. Math.},
  volume={219},
  date={2008},
  pages={1972--2005}
}
\end{biblist}
\end{bibdiv}
\end{document}